\documentclass[reqno]{amsart}
\usepackage{amsmath,amsbsy,amsfonts}
\usepackage{color}

\setlength{\textwidth}{5.0in}
\setlength{\textheight}{7.5in}
\usepackage[utf8]{inputenc}
\usepackage{graphicx}
\usepackage{amsmath,latexsym,amssymb}
\newtheorem{theorem}{Theorem}
\newtheorem{proposition}{Proposition}[section]

\newtheorem{definition}[proposition]{Definition}
\newtheorem{lemma}[proposition]{Lemma}
\newtheorem{remark}[proposition]{Remark}

\newcommand{\dd}{\mathrm{d}}

\numberwithin{equation}{section}
\makeatletter\makeatother

\pretolerance=10000

\usepackage{color}
\numberwithin{equation}{section}

\title[Critical Ambrosetti-Prodi type problems for  fractional $p$-Laplacian ]{ Critical concave convex  Ambrosetti-Prodi type problems for fractional $p$-Laplacian}

\author[H. P. Bueno]{H. P. Bueno}\thanks{First author takes part in the project 422806/2018-8 by CNPq/Brazil}
\author[E.Huerto Caqui]{E. Huerto Caqui}\thanks{Second author was supported by CAPES/Brazil}
\author[O. H. Miyagaki]{O. H. Miyagaki }\thanks{Third author was supported by Grant 2019/24901-3 by S\~ao Paulo Research Foundation (FAPESP) and Grant 307061/2018-3 by CNPq/Brazil.}
\author[F.R.Pereira] {F. R. Pereira}
\address[H. P. Bueno]{ Department of Matematics, Universidade Federal de Minas Gerais, 31270-901 - Belo Horizonte-MG, Brazil}
\email{hamilton@mat.ufmg.br}
\address[E. Huerto Caqui]{ Department of Matematics, Universidade Federal de Minas Gerais, 31270-901 - Belo Horizonte-MG, Brazil}
\email{analisis\_11@hotmail.com}
\address[O. H. Miyagaki]{Department of Mathematics, Universidade Federal de S\~ao Carlos, 13565-905 - S\~ao Carlos-SP, Brazil}
\email{olimpio@ufscar.br, ohmiyagaki@gmail.com}
\address[F. R. Pereira]{Department of Mathematics, Universidade Federal de Juiz de Fora, 36036-330 - Juiz de Fora-MG, Brazil}
\email{fabio.pereira@ufjf.edu.br}
\subjclass[2010]{35A15, 35R11, 35J60, 35J62,35B33}

\keywords{Variational methods; fractional equations; nonlinear elliptic equations; quasilinear equations; critical exponents.}

\begin{document} 

\begin{abstract}
In this paper we consider a class of critical  concave convex Ambrosetti-Prodi type problems 
involving the fractional  $p$-Laplacian operator. By applying the Linking Theorem and the Mountain Pass Theorem as well, 
the interaction of the  nonlinearities with the first eigenvalue of the fractional  $p$-Laplacian will be used to prove existence of multiple solutions. 
\end{abstract}
\maketitle

\section{Introduction}
Let $\Omega\subset\mathbb{R}^N$ be a bounded smooth domain, $s\in (0,1)$ and $N>sp$.
In this paper we investigate the existence of multiple solutions for the following nonlocal problem 
\begin{align}\label{principal}
\left\{
\begin{array}{rlll}
(-\Delta)^{s}_{p} u&=& -\lambda\vert u\vert^{q-2}u+a\vert u\vert^{p-2}u+b(u^{+})^{p^*_{s}-1} &\textrm{in}\;\;\Omega,\\
u&=&0 &\textrm{in}\;\;\mathbb{R}^N\setminus\Omega,
\end{array}
\right.
\end{align}
where $(-\Delta)^{s}_p$ is the fractional $p$-Laplacian operator defined by
$$(-\Delta)^{s}_pu(x)=2\displaystyle\lim_{\epsilon\to 0}
 \displaystyle\int_{\mathbb{R}^N\setminus B(x,\epsilon)}\frac{\vert u(x)-u(y)\vert^{p-2}(u(x)-u(y))}{\vert x-y\vert^{N+sp}}\dd y, 
 \; \; x\in\mathbb{R}^{N}.$$
In \eqref{principal}, $\lambda>0$ is a parameter, $a, b>0$ are real constants, $1<q<p$, $p_s^*=pN/(N-sp)$ is the critical Sobolev exponent for $(-\Delta)^{s}_{p}$ and
$u^{+}:=\max\{0,u\}$ denotes the positive part of $u$, while the negative part of $u$ will be denoted $u^{-}=\min\{0,u\}.$
Consequently, $u=u^+ + u^-$. 

The term $(u^+)$ appears for the first time in the classical paper by Ruf and Srikanth \cite{Ruf}. But our problem is also related to two classical local problems, namely, the Ambrosetti-Prodi and the Brezis-Nirenberg problems. 
On these subjects see the excellent  books \cite{Fucik}  and \cite{willem}, respectively.

In 1972, Ambrosetti and Prodi  \cite{Am}  considered the Dirichlet boundary value problem
$$ -\Delta u =g(u) + f(x) \  \mbox{in} \  \Omega,\quad u=0 \  \mbox{on}\  \partial \Omega,\leqno{(AP)}$$
where $\Omega \subset \mathbb{R}^N$  is a bounded smooth domain, $-\Delta$ denotes the Laplacian operator, $f:\mathbb{R}^N\rightarrow \mathbb{R}$ is a $C^1$ function, $g:\mathbb{R} \rightarrow \mathbb{R}$ is $C^2$, convex and satisfies
$$0<g_{-}=\lim_{t \rightarrow -\infty}g'(t)< \lambda_1 < g_{+}=\lim_{t \rightarrow +\infty} g'(t)< \lambda_2,$$
with $\lambda_1 $ and $\lambda_2$ denoting the first and second eigenvalues of $(-\Delta, H^{1}_{0}(\Omega))$. 
They proved the existence of a $C^1$ manifold $M$ in $C^{0,\alpha}(\overline{\Omega})$, 
which splits the space into two open sets $O_0$ and
$O_2$ with the following properties
\begin{itemize}
\item [$(i)$] if $f \in O_0$, problem ($AP$) has no solution;
\item [$(ii)$] if $f \in M$, problem ($AP$) has exactly one solution;
\item [$(iii)$] if $f \in O_2$, problem ($AP$) has exactly two solutions.
\end{itemize}

Many authors have extended this result in different ways and we would like to apologize if we omit some important contributions, 
but  we cite, e.g., the papers \cite{ Amann, Aubin, Bere, berger, Calanchi, dancer, deFiguei, Je, Ruf, STZ} and references therein. 
All these results show the role of the interaction between $g_{\pm}$ and the eigenvalues of $(-\Delta, H^{1}_{0}(\Omega))$.

On the other hand, in 1983, Brezis and Nirenberg \cite{BN}  studied the Dirichlet boundary problem
$$ -\Delta u = a u + |u|^{ 2^{*}-2} u \  \mbox{in} \  \Omega,\quad u=0 \  \mbox{on}\  \partial \Omega,\leqno{(BN)}$$
where $\Omega \subset \mathbb{R}^N$ is a bounded smooth domain, $a > 0$ and $2^{*} =2N/(N-2)\ (N\geq 3)$ is the Sobolev critical exponent. 
As before, denoting by $\lambda_j \; (j=1,2, \dots)$ the eigenvalues of $-\Delta$, the authors proved that there exists $a_0>0$ such that 
\begin{itemize}
\item [$(i)$] if $ a_0 < a< \lambda_1$, $ N=3$ and $\Omega=B_1(0)$, problem $(BN)$ has at least one positive solution;
\item [$(ii)$] if $a < \lambda_1 $  and $N\geq 4,$ problem $(BN)$ has at least one positive solution.
\end{itemize}
Among many works extending or complementing the above result for both local and nonlocal operators, we mention e.g. 
\cite{Ambr,AM,barrios,Brasco,CALANCHI,Ch,KHIDDI,mosconi,pere,serval}. But we would like to highlight Capozzi, Fortunato and 
Palmieri \cite{Capozzi}, where the authors proved that problem $(BN)$ has at least one nontrivial solution for all $a>0$  if $N\geq 5$  and for all $a \neq \lambda_j$ if $N=4$.  

In the interesting work \cite{paiva1}, de Paiva and Presoto established a multiplicity result for the Dirichlet boundary problem 
$$ -\Delta u = -\lambda |u |^{q-2}u + au + ( u^+)^{ p-1}  \  \mbox{in} \  \Omega,\quad u=0 \  \mbox{on}\  \partial \Omega,\leqno{(PP)}$$
where $\Omega \subset \mathbb{R}^N$ is a bounded smooth domain, $a,\lambda  > 0$ and $ 1< q< 2< p \leq 2^*$. (See also \cite{paiva} for the subcritical case and \cite{HUANG,mont} 
for related problems.) The main goal of the present paper is to prove the result
obtained in \cite{paiva1} for the fractional $p$-Laplacian operator extending the results obtained in \cite{O} and in \cite{BHM}.

Acknowledgments. The authors wish to thank an anonymous referee that helped us to simplify the presentation of the article.
\section{Notations and preliminary stuff}
For any measurable function $u:\mathbb{R}^N\to\mathbb{R}$ the Gagliardo seminorm is defined by
$$
[u]_{s,p}:=\Big(\int_{\mathbb{R}^{2N}}\frac{|u(x)-u(y)|^p}{|x-y|^{N+sp}} dxdy\Big)^{1/p}.
$$
We consider the fractional Sobolev space
$$
W^{s,p}(\mathbb{R}^N)=\{u\in L^p(\mathbb{R}^N):\,\, [u]_{s,p}<\infty\}
$$
endowed with the norm $\|u\|_{W^{s,p}}=(\|u\|^p_{L^p}+[u]_{s,p}^p)^{1/p}$. Since solutions should be equal to zero outside of $\Omega$, it is natural to consider the closed linear subspace given by 
$$X_p^s=\{u\in W^{s,p}(\mathbb{R}^{N})\, :\, u = 0 \textrm{ in } \mathbb{R}^N\setminus\Omega \}$$
equivalently renormed by setting $\| \cdot \|_{X_p^s}= [\cdot]_{s,p} $, which is a uniformly convex Banach space. The imbedding $X_p^s \hookrightarrow L^r(\Omega)$ is continuous for $r \in [1, p^*_s ]$ and compact for  $r \in [1, p^*_s )$.

We define, for all $u,v\in X_p^s$, the operator $A:X_p^s\to (X_{p}^{s})^*$ by
\begin{equation*}
A(u)\cdot v=\int_{\mathbb{R}^{2N}}\frac{\vert u(x)-u(y)\vert^{p-2}(u(x)-u(y))(v(x)-v(y))}{\vert x-y\vert^{N+sp}}\dd x\dd y.
\end{equation*}

\begin{definition}
We say that $u\in X_p^s$ is a weak solution to \eqref{principal} if
$$A(u)\cdot v=-\lambda\int_{\Omega}\vert u\vert^{q-2}uv\dd x+a\int_{\Omega}\vert u\vert^{p-2}uv\dd x+b\int_{\Omega}(u^+)^{p_s^*-1}v\dd x,$$ 
for all $v\in X_p^s.$
\end{definition}

Since the action functional
$I_{\lambda,s}:X_p^s\to\mathbb{R}$ is given by 
\begin{equation*}
I_{\lambda,s}(u)=\displaystyle\frac{1}{p}\|u\|_{X_p^s}^p+\frac{\lambda}{q}\int_{\Omega}\vert u\vert^{q}\dd x-\frac{a}{p}\int_{\Omega}\vert u\vert^p\dd x-\frac{b}{p_s^*}\int_\Omega (u^+)^{p_s^*}\dd x,
\end{equation*}
we have
$$I_{\lambda,s}'(u)\cdot v= A(u)\cdot v+\lambda\int_{\Omega}\vert u\vert^{q-2}uv\dd x-a\int_{\Omega}\vert u\vert^{p-2}uv\dd x-b\int_{\Omega}\!(u^+)^{p_s^*-1}v\dd x,$$
thus implying that critical points of $I_{\lambda,s}$ are weak solutions of \eqref{principal}.
 
In the local problem studied by de Paiva and Presoto \cite{paiva1}, the driving operator used is the standard Laplacian on the Sobolev space $H_0^1 (\Omega)$. In our work, in order to obtain two solutions of opposite constant sign for problem \eqref{principal} as in \cite{paiva1}, we apply the Mountain Pass Theorem  to the positive and negative parts of the functional $I_{\lambda, s}$. In view of the essential differences in the functional setting ($H_0^1 (\Omega)$ versus $X_p^s$ and $-\Delta$ versus  $(-\Delta)^s_p$), it was necessary to obtain a result (see Theorem \ref{t1}) relating the local minimizers in different spaces ($ C_{\delta}^0 (\overline{\Omega})$ versus $ X_p^s $) to obtain the geometrical conditions of the mountain pass,  whose proof was inspired by the works  \cite{ni} and \cite{Saoudi} for semilinear and quasilinear problems, respectively. (The space $C_{\delta}^0(\overline{\Omega})$ will be defined in Section \ref{minimizer}).
 
A third solution to the problem \eqref{principal} was obtained via the Linking Theorem (see \cite{rabinowitz})
adapting arguments found in Miyagaki, Motreanu and Pereira \cite{O}, de Paiva and Presoto \cite{paiva1}, but mainly in de Figueiredo and Yang \cite{deFiguei}. The presence of the fractional $p$-Laplacian makes impossible the application of approximate eigenfunctions, 
which were used in \cite{paiva1}. Furthermore, since an explicit formula for minimizers of the  best constant of the Sobolev 
immersion ${X_p^s}\hookrightarrow L^{p_s^*}(\Omega)$ is not available, another difficulties arise. Partial solutions were obtained by Chen, Mosconi and Squassina \cite{Chen}, Mosconi, Perera, Squassina and Yang \cite{mosconi} and also by Brasco, Mosconi and Squassina \cite{Brasco}. 

In order to obtain the geometric conditions of the Linking Theorem, we define
\begin{equation*}
\lambda^{*}=\inf\left\{\|u\|_{X_p^s}^{p}\; : \; u\in W, \ \|u\|_{L^{p}(\Omega)}^{p}=1\right\},
\end{equation*}
where $$W=\left\{ u\in X_p^s\; :\; A(\varphi_1)\cdot u =0\right\},$$
with $\varphi_1$ the first eigenfunction of $(-\Delta)_p^s$, positive and $L^p$-normalized associated with the first eigenvalue $\lambda_1$.  

Following ideas of Alves, Carrião and Miyagaki \cite{alves} and Anane and Tsouli \cite{Anane} (see also \cite{Capozzi}), it is not difficult to obtain the next result, see \cite{BHM} for details.
\begin{proposition}
$\lambda_1<\lambda^{*}$.
\end{proposition}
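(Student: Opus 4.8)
The plan is to show that $\lambda_1 < \lambda^*$ by exploiting the variational characterization of $\lambda_1$ together with the defining constraint of the space $W$. Recall that $\lambda_1$ is characterized as
\begin{equation*}
\lambda_1 = \inf\left\{ \|u\|_{X_p^s}^p \; : \; u \in X_p^s, \ \|u\|_{L^p(\Omega)}^p = 1 \right\},
\end{equation*}
and this infimum is attained precisely at (multiples of) the positive normalized eigenfunction $\varphi_1$. Since $W$ is the closed subspace $\{u \in X_p^s : A(\varphi_1)\cdot u = 0\}$, the quantity $\lambda^*$ is the same Rayleigh-type infimum but restricted to $W$; hence trivially $\lambda_1 \le \lambda^*$, and the whole point is to rule out equality, i.e. to show the restricted infimum is not attained at the level $\lambda_1$.

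First I would argue by contradiction: suppose $\lambda^* = \lambda_1$. Then I would take a minimizing sequence $(u_n) \subset W$ with $\|u_n\|_{L^p(\Omega)}^p = 1$ and $\|u_n\|_{X_p^s}^p \to \lambda_1$. By reflexivity of $X_p^s$ and the compact embedding $X_p^s \hookrightarrow L^p(\Omega)$, pass to a subsequence with $u_n \rightharpoonup u$ weakly in $X_p^s$ and $u_n \to u$ strongly in $L^p(\Omega)$, so that $\|u\|_{L^p(\Omega)}^p = 1$ and, by weak lower semicontinuity of the norm, $\|u\|_{X_p^s}^p \le \liminf \|u_n\|_{X_p^s}^p = \lambda_1$. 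Combined with the characterization of $\lambda_1$ this forces $\|u\|_{X_p^s}^p = \lambda_1$, so $u$ is a minimizer for $\lambda_1$ and therefore $u = \pm\varphi_1$ (up to the known simplicity/sign of the first eigenfunction). Moreover $W$ is weakly closed (being a closed subspace), so $u \in W$, i.e. $A(\varphi_1)\cdot u = 0$.

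The contradiction then comes from evaluating $A(\varphi_1)\cdot u$ with $u = \pm\varphi_1$: we get $A(\varphi_1)\cdot \varphi_1 = [\varphi_1]_{s,p}^p = \|\varphi_1\|_{X_p^s}^p = \lambda_1 \|\varphi_1\|_{L^p(\Omega)}^p = \lambda_1 > 0$ (using that $\varphi_1$ is an eigenfunction normalized in $L^p$), which contradicts $A(\varphi_1)\cdot u = 0$. Hence the assumption $\lambda^* = \lambda_1$ is impossible and, since $\lambda^* \ge \lambda_1$ always, we conclude $\lambda_1 < \lambda^*$.

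The main obstacle is the step identifying the weak limit of the minimizing sequence with $\pm\varphi_1$: this relies on the minimizers of the $\lambda_1$ Rayleigh quotient being exactly the one-dimensional family spanned by $\varphi_1$, i.e. on the simplicity of the first eigenvalue of the fractional $p$-Laplacian with Dirichlet condition on $\mathbb{R}^N\setminus\Omega$. This is known (Lindgren–Lindqvist type results) and is precisely the kind of fact the excerpt defers to \cite{BHM}, so in the write-up I would cite it rather than reprove it; the remaining arguments (weak lower semicontinuity, compactness, weak closedness of $W$, and the final one-line computation of $A(\varphi_1)\cdot\varphi_1$) are routine.
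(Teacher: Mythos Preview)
Your argument is correct and is precisely the standard route: the paper itself does not spell out a proof but defers to \cite{BHM} (in the spirit of \cite{alves,Anane,Capozzi}), and those references proceed exactly as you do, using the simplicity of the first eigenvalue together with the observation that $\varphi_1\notin W$ (since $A(\varphi_1)\cdot\varphi_1=\|\varphi_1\|_{X_p^s}^p>0$) to rule out $\lambda^*=\lambda_1$. Nothing needs to be added.
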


We are now in a position to establish the main result of this paper.
\begin{theorem}\label{t0}
Suppose $\lambda_1<a<\lambda^{*}$, $b>0$, $1<q<p$ and assume that one of the following conditions hold,
\begin{enumerate}
\item[$(i)$] $N>sp^2$ and $1<p\leq\dfrac{2N}{N+s}$, 
\item[$(ii)$] $N>sp((p-1)^2+p)$ and $p>\dfrac{2N}{N+s}$.
\end{enumerate}
Then problem \eqref{principal} has at least three nontrivial solutions if $\lambda>0$ is small enough. 
\end{theorem}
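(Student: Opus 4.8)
The plan is to obtain the three solutions by splitting the analysis into two regimes. First I would produce two solutions of constant sign by working with the truncated functionals
\[
I^{\pm}_{\lambda,s}(u)=\frac1p\|u\|_{X_p^s}^p+\frac{\lambda}{q}\int_\Omega|u^{\pm}|^q\,\dd x-\frac{a}{p}\int_\Omega|u^{\pm}|^p\,\dd x-\frac{b}{p_s^*}\int_\Omega (u^{\pm})^{p_s^*}\,\dd x,
\]
whose critical points are, by the usual argument (test with $u^\mp$ and use the strict monotonicity of $A$ together with the algebraic inequality for the fractional operator acting on $u^{\pm}$), respectively nonnegative and nonpositive weak solutions of \eqref{principal}. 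For $I^+_{\lambda,s}$ the origin is a strict local minimum: since $q<p<p_s^*$, for $\|u\|_{X_p^s}$ small the quadratic-type term $\frac1p\|u\|^p$ dominates $-\frac{a}{p}\|u^+\|_{L^p}^p-\frac{b}{p_s^*}\|u^+\|_{L^{p_s^*}}^{p_s^*}$ only after we absorb the bad $-\frac ap\|u^+\|_{L^p}^p$ term; this is exactly where $a<\lambda^*$ (and in fact the fact that on all of $X_p^s$ one still has enough coercivity near $0$ because the concave term $\frac\lambda q\|u^+\|_{L^q}^q$ is of lower order with a favourable sign) gives a strict local minimum with negative energy for small $\lambda$, after first checking the minimum in the $C^0_\delta(\overline\Omega)$ topology and then invoking Theorem \ref{t1} to upgrade it to a local minimum in $X_p^s$. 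Mountain-pass geometry for $I^+_{\lambda,s}$ is completed by noting $I^+_{\lambda,s}(t\varphi_1)\to-\infty$ as $t\to+\infty$ because the $p_s^*$-term dominates. The symmetric statement gives the mountain-pass geometry for $I^-_{\lambda,s}$, whence two nontrivial constant-sign solutions.

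The delicate point is the Palais--Smale condition for the truncated functionals at the mountain-pass level $c^{\pm}$. Because of the critical exponent, compactness only holds below a threshold of the form $c^{\pm}<\frac{s}{N}\,\frac{S^{N/(sp)}}{b^{(N-sp)/(sp)}}$ where $S$ is the best Sobolev constant of $X_p^s\hookrightarrow L^{p_s^*}$. One estimates $c^{\pm}\le\max_{t\ge0}I^{\pm}_{\lambda,s}(t U_\varepsilon)$ for a suitably truncated and rescaled near-extremal family $U_\varepsilon$ (as constructed in \cite{Brasco,Chen,mosconi}, since no explicit minimizer is available), and here the lower-order terms $-\frac ap\|\cdot\|_{L^p}^p$ and $+\frac\lambda q\|\cdot\|_{L^q}^q$ must be shown to push the max strictly below the threshold. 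This is precisely the role of the dimensional hypotheses $(i)$ and $(ii)$: they guarantee that the quantity $\int_\Omega|U_\varepsilon|^p$ is of larger order than the error terms coming from the truncation in the estimate of $\|U_\varepsilon\|_{X_p^s}^p$ and $\|U_\varepsilon\|_{L^{p_s^*}}^{p_s^*}$, so that the gain from the $a$-term beats the loss; the two cases reflect whether $p\le 2N/(N+s)$ or not, which changes the order of the crossed terms in the expansion of the fractional Gagliardo seminorm of $t U_\varepsilon$. Taking $\lambda$ small keeps the concave term from interfering. I expect this threshold estimate, and in particular tracking the exact exponents of $\varepsilon$ in the asymptotic expansions under $(i)$ and $(ii)$, to be the main obstacle in the whole argument.

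For the third solution I would use the Linking Theorem from \cite{rabinowitz} applied to the full functional $I_{\lambda,s}$, following \cite{deFiguei,O,paiva1}. The natural linking structure uses the subspace $\operatorname{span}\{\varphi_1\}$ and its complement $W$: on $W$ one has, by the definition of $\lambda^*$ and the hypothesis $a<\lambda^*$, that $I_{\lambda,s}(u)\ge\frac1p(1-a/\lambda^*)\|u\|^p_{X_p^s}+(\text{lower order})-(\text{critical term})$, which is bounded below on a small sphere in $W$ and stays so after adding the sign-indefinite concave perturbation provided $\lambda$ is small; while on the finite-dimensional piece $\mathbb{R}\varphi_1\oplus(\text{a ray})$, using $a>\lambda_1$, the functional is $\le0$ on the boundary of a large box, again because the $p_s^*$-term dominates at infinity and the $\lambda_1$-direction is handled by $a>\lambda_1$. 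Since an explicit extremal is unavailable one must again place a truncated near-extremal family into the linking sets and estimate the linking level $c$ below the same critical threshold $\frac sN S^{N/(sp)}b^{-(N-sp)/(sp)}$, reusing the computations from the mountain-pass step under hypotheses $(i)$--$(ii)$; then $(PS)_c$ holds, the Linking Theorem yields a critical point, and a comparison of critical levels (or the fact that this solution is sign-changing, since the two previous ones are of constant sign and the linking level is strictly larger) shows it is distinct from the first two, giving the third nontrivial solution.
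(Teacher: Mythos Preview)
Your overall architecture matches the paper's: two constant-sign solutions via Mountain Pass on the truncated functionals $I^\pm_{\lambda,s}$, and a third via the Linking Theorem with the decomposition $X_p^s=\textrm{span}\{\varphi_1\}\oplus W$. But several details are off, and one step is needlessly hard.

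First, your $I^-_{\lambda,s}$ should not carry the critical term: the nonlinearity in \eqref{principal} involves only $(u^+)^{p_s^*-1}$, so a nonpositive solution sees no critical growth. The correct negative-part functional is subcritical and satisfies $(PS)$ at every level; no threshold estimate is needed there. Second, your description of the local minimum is garbled: the strict local minimum of $I^\pm_{\lambda,s}$ is at the origin with value $0$ (not ``negative energy''), and its verification in $C_\delta^0$ does not use $a<\lambda^*$; that hypothesis enters only in the Linking geometry on $W$.

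More importantly, your plan to estimate the mountain-pass levels $c^\pm$ via the near-extremals $U_\varepsilon$ under $(i)$--$(ii)$ is unnecessary, and this is where your proposal diverges from the paper. Since $a>\lambda_1$, the simple path $t\mapsto t\varphi_1$ already gives
\[
I^+_{\lambda,s}(t\varphi_1)=\frac{t^p}{p}(\lambda_1-a)+\frac{\lambda t^q}{q}\|\varphi_1\|_{L^q}^q-\frac{bt^{p_s^*}}{p_s^*}\|\varphi_1\|_{L^{p_s^*}}^{p_s^*}\le \frac{\lambda t_0^q}{q}\|\varphi_1\|_{L^q}^q,
\]
so $c^+\le \frac{\lambda t_0^q}{q}\|\varphi_1\|_{L^q}^q\to 0$ as $\lambda\to 0$, which lands below the compactness threshold for small $\lambda$ with no reference to $U_\varepsilon$ or to the dimensional hypotheses. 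In the paper, conditions $(i)$--$(ii)$ are used \emph{only} in the Linking step, where the linking level built from the projected near-extremal $e_{\varepsilon,\delta}=P_2^s u_{\varepsilon,\delta}\in W$ must be pushed strictly below $\frac{s}{N}b^{(sp-N)/(sp)}S_{p,s}^{N/(sp)}$. The three solutions are then distinguished purely by the level inequality $C_\lambda^\pm<\alpha\le c_s$; the paper does not claim the linking solution changes sign.
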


\begin{remark} In Theorem \ref{t0}, we consider only two of the six possibilities below, \begin{enumerate}
\item [$(a)$] $1<p\leq\dfrac{2N}{N+s}\quad$ $\textrm{ and }$ $\quad\left\{ \begin{array}{rc}
		(1)& N>sp^2,  \\
		(2)& N=sp^2, \\
		(3)& sp<N<sp^2, 
		\end{array} \right.$
\item [$(b)$] $\left[p>\dfrac{2N}{N+s},\  N>sp\big( (p-1)^2+p\big)\right]\quad$ $\textrm{ and }$ $\quad\left\{ \begin{array}{rc}
		(1)& N>sp^2,  \\
		(2)& N=sp^2, \\
		(3)& sp<N<sp^2,
		\end{array} \right.$
	\end{enumerate}
since it is not difficult to verify that the situations $(a)-(2)$, $(a)-(3)$, $(b)-(2)$ and $(b)-(3)$ are incompatible. 
\end{remark}

\section{$C_{\delta}^0$ versus $W^{s,p}$ minimization for polynomial growth}\label{minimizer}
The main result of this section is a local minimization equivalence for functionals defined in the fractional Sobolev space $X_p^s$ with polynomial growth nonlinearity, following ideas developed by Barrios, Colorado, de Pablo and Sanchéz \cite{barrios}, Giacomoni, Prashanth and Sreenadh \cite{giacomoni} and Iannizzotto,  Mosconi and Squassina \cite{iannizzoto}. The result we prove is more general than those found in \cite{barrios} and  \cite{iannizzoto}, since we allow $p>1$.

We start showing a regularization result that will be useful in the proof of Theorem \ref{t1}. Its proof is similar to that of \cite[Lemma 3.1]{BHM}. 
\begin{proposition}\label{CPL2_23.1}
Suppose $\vert g(t)\vert\leq C(1+\vert t\vert^{q-1})$, for some  $1\leq q\leq p^*_s$ and $\,C>0$. Let $(v_\epsilon)_{\epsilon\in(0,1)}\subseteq {X_p^s}$ be a bounded family of solutions  in  ${X_p^s}$ to the problem
\begin{align}\label{CPL2_23.2}
\left\{\begin{array}{rlll}
(-\Delta)^{s}_{p} u&=&\left(\displaystyle\frac{1}{1-\xi_{\epsilon}}\right)g(u) &\textrm{in}\;\;\Omega,\\
u&=&0 &\textrm{in}\;\;\mathbb{R}^N\setminus\Omega,
\end{array}\right.
\end{align}
with $\xi_\epsilon\leq 0$. Then  
$\displaystyle{\sup_{\epsilon\in(0,1)}}\|v_\epsilon\|_{L^{\infty}(\Omega)}<\infty.$
\end{proposition}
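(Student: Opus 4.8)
The plan is to prove a Moser-type $L^{\infty}$ bound for each $v_\epsilon$, following the scheme of \cite[Lemma 3.1]{BHM} (itself patterned on \cite{iannizzoto}), the whole issue being to keep every constant produced along the way independent of $\epsilon$. First I would discard the bad coefficient: since $\xi_\epsilon\le 0$ we have $0<(1-\xi_\epsilon)^{-1}\le 1$, so each $v_\epsilon$ is a weak solution of $(-\Delta)^{s}_{p}v_\epsilon=h_\epsilon$ in $\Omega$, $v_\epsilon=0$ in $\mathbb{R}^N\setminus\Omega$, with $|h_\epsilon(x)|\le C(1+|v_\epsilon(x)|^{q-1})$ and $C$ not depending on $\epsilon$. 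Next, the hypothesis $\sup_\epsilon[v_\epsilon]_{s,p}<\infty$ together with the continuous embedding $X_p^s\hookrightarrow L^{p^*_s}(\Omega)$ produces $M_1:=\sup_\epsilon\|v_\epsilon\|_{L^{p^*_s}(\Omega)}<\infty$; this is the only global quantity that will enter the iteration.

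For the iteration, for $\beta\ge 1$ and $M>0$ set $v_{\epsilon,M}:=\min\{|v_\epsilon|,M\}$ and test the equation with $\varphi_{\epsilon,M}:=v_\epsilon\,v_{\epsilon,M}^{p(\beta-1)}$, which belongs to $X_p^s$ because $t\mapsto t\min\{|t|,M\}^{p(\beta-1)}$ is Lipschitz and vanishes at $0$. The elementary convexity inequality for the integrand of $A(v_\epsilon)\cdot\varphi_{\epsilon,M}$ (as in \cite{iannizzoto}) bounds this quantity from below by $c\,\beta^{1-p}[w_{\epsilon,M}]_{s,p}^{p}$, where $w_{\epsilon,M}$ equals $|v_\epsilon|^{\beta}$ on $\{|v_\epsilon|\le M\}$; combining the fractional Sobolev inequality applied to $w_{\epsilon,M}$ with the bound on $h_\epsilon$ yields the recursive estimate
\[
\|w_{\epsilon,M}\|_{L^{p^*_s}(\Omega)}^{p}\le C\,\beta^{p-1}\int_{\Omega}\bigl(1+|v_\epsilon|^{q-1}\bigr)\,|v_\epsilon|\,v_{\epsilon,M}^{p(\beta-1)}\,\dd x .
\]
Letting $M\to\infty$ (monotone convergence) and iterating with $\beta=\beta_k=(p^*_s/p)^{k}$, $k\ge 1$, bootstraps $v_\epsilon$ from $L^{p^*_s}(\Omega)$ up the Lebesgue scale; since $\sum_k\beta_k^{-1}\log\bigl(C\beta_k^{p-1}\bigr)<\infty$, one obtains $\|v_\epsilon\|_{L^{\infty}(\Omega)}\le K\bigl(N,s,p,q,C,|\Omega|,M_1\bigr)$, a bound free of $\epsilon$.

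When $q<p^*_s$ the right-hand side above is strictly subcritical and, via Hölder and Young's inequality, is controlled by $\tfrac12\|w_{\epsilon,M}\|_{L^{p^*_s}}^{p}$ plus powers of $M_1$ and $|\Omega|$, so the iteration closes using only $M_1$; this is the routine case. The genuinely delicate one is $q=p^*_s$, where the leading term becomes the exactly critical quantity $\int_{\Omega}|v_\epsilon|^{p^*_s-p}\,|w_{\epsilon,M}|^{p}\,\dd x$. To absorb it one splits $\Omega=\{|v_\epsilon|\le\ell\}\cup\{|v_\epsilon|>\ell\}$ in the Brezis--Kato fashion: the part over $\{|v_\epsilon|\le\ell\}$ costs only a factor $\ell^{p^*_s-p}$ times a power of $M_1$, while the part over $\{|v_\epsilon|>\ell\}$ is, by Hölder, at most $\bigl(\int_{\{|v_\epsilon|>\ell\}}|v_\epsilon|^{p^*_s}\,\dd x\bigr)^{(p^*_s-p)/p^*_s}\|w_{\epsilon,M}\|_{L^{p^*_s}}^{p}$, which is absorbable into the left-hand side only if $\ell$ can be chosen large \emph{uniformly in $\epsilon$}, i.e. if $\sup_\epsilon\int_{\{|v_\epsilon|>\ell\}}|v_\epsilon|^{p^*_s}\,\dd x\to 0$ as $\ell\to\infty$. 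I expect this uniform integrability --- not the iteration machinery --- to be the main obstacle: it does not follow from mere boundedness in $X_p^s$, and in the present setting it is used through the fact that the family $(v_\epsilon)$ arising in the proof of Theorem \ref{t1} is not only bounded but convergent (hence relatively compact) in $L^{p^*_s}(\Omega)$, so that $\{|v_\epsilon|^{p^*_s}\}$ is equi-integrable. Once $\ell$ is fixed this way the first iteration step closes, all subsequent steps gain integrability automatically, and the critical case reduces to the subcritical one with the same $\epsilon$-independent constant.
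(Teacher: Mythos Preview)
Your argument follows the Moser iteration scheme, whereas the paper proceeds by a Stampacchia/De Giorgi level-set truncation: one tests with $T_k(v_\epsilon)$, uses the pointwise inequality $|s-t|^{p-2}(s-t)(T_k(s)-T_k(t))\ge|T_k(s)-T_k(t)|^p$ to bound the left-hand side below by $\|T_k(v_\epsilon)\|_{X_p^s}^p$, and estimates the right-hand side by H\"older with three exponents $\theta_1,\theta_2,\theta_3$ satisfying $(p_s^*-1)\theta_1<p_s^*$ and $\theta_2<p$. This yields $\phi(h)\le C(h-k)^{-\theta_2}\phi(k)^{\beta/(\beta-1)}$ for $\phi(k)=|\{|v_\epsilon|\ge k\}|$ with $\beta>1$, and the standard iteration over a geometric sequence of levels gives $|v_\epsilon|\le d$ with $d$ depending only on $C$, $|\Omega|$ and the uniform $L^{p_s^*}$ bound.

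The difference is not cosmetic. In the critical case $q=p_s^*$ your scheme needs to absorb the term $\int_\Omega |v_\epsilon|^{p_s^*-p}|w_{\epsilon,M}|^p\dd x$, and you correctly identify that this requires $\sup_\epsilon\int_{\{|v_\epsilon|>\ell\}}|v_\epsilon|^{p_s^*}\dd x\to 0$, i.e.\ equi-integrability of $\{|v_\epsilon|^{p_s^*}\}$. This is \emph{not} a consequence of the hypotheses of the proposition (mere boundedness in $X_p^s$), and appealing to the specific context of Theorem~\ref{t1} does not prove the proposition as stated --- it proves a weaker statement with an additional compactness assumption. The paper's level-set approach sidesteps this entirely: because one of the H\"older exponents satisfies $(p_s^*-1)\theta_1<p_s^*$, the factor carrying $v_\epsilon$ is controlled by the uniform $L^{p_s^*}$ bound alone, and the smallness driving the iteration comes from $|\Omega_k|^{1/\theta_3}$ with $\phi(0)=|\Omega|$ as the uniform starting datum. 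No absorption and hence no equi-integrability is needed. So your route works for $q<p_s^*$ but, for the proposition exactly as written, has a genuine gap at $q=p_s^*$; switching to the Stampacchia truncation closes it.
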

\begin{proof}
For $0<k\in\mathbb{N}$, we define
$$T_{k}(s)= \left\{ \begin{array}{rl}
s+k, &\textrm{if}\quad s\leq -k,\\
0, &\textrm{if}\quad -k<s<k,\\
s-k,&\textrm{if}\quad s\geq k
\end{array} \right.$$ 
and
$$\Omega_{k}=\{x\in\Omega\,:\; \vert v_{\epsilon}(x)\vert\geq k\}.$$ 
Observe that $T_{k}(v_{\epsilon})\in X_p^s$ and $\|T_{k}(v_{\epsilon})\|_{X_p^s}^p\leq C^p\|v_{\epsilon}\|_{X_p^s}^p<\infty$ for a constant $C>0$. 
	
Taking $T_{k}(v_{\epsilon})$ as a test-function, we obtain
\begin{align*}
A( v_{\epsilon})\cdot T_{k}(v_{\epsilon})=&\int_{\Omega} \left(\displaystyle\frac{1}{1-\xi_{\epsilon}}\right) g(v_{\epsilon})T_{k}(v_{\epsilon})\dd x\\
\leq&\int_{\Omega_k}C\vert T_{k}(v_{\epsilon})\vert \dd x+C\int_{\Omega_k}\vert v_\epsilon\vert^{p^*_s-1} \vert T_{k}(v_{\epsilon})\vert \dd x.
\end{align*} 
	
Now consider $1<\theta_3<\theta_2<p$,  $p<\dfrac{\theta_2}{\theta_3}+1$ and
$(p_s^*-1)\theta_1<p_s^*$ such that $\theta_1 ^{-1}+\theta_2^{-1}+\theta_3^{-1}=1$. Therefore, by applying Hölder's inequality, we obtain
\begin{equation}\label{CPL2_23.4}
A(v_{\epsilon})\cdot T_{k}(v_{\epsilon})\leq C\left(\int_{\Omega}\vert T_{k}(v_{\epsilon})\vert^{\theta_2} \dd x\right)^{1/\theta_2}\vert\Omega_{k}\vert^{1/\theta_3}.
\end{equation}
	
Denote $$T(x,y)=\dfrac{\vert  v_{\epsilon}(x)-v_{\epsilon}(y)\vert^{p-2}(v_{\epsilon}(x)-
v_{\epsilon}(y))(T_{k}(v_{\epsilon})(x)-T_{k}(v_{\epsilon})(y))}{\vert x-y\vert^{N+sp}}.$$

Noting that the following inequality holds
$$|s-t|^{p-2}(s-t)(T_k(s)-T_k(t))\geq |T_k(s)-T_k(t)|^{p}, \;\; \mbox{for all} \; \;s,t \in \mathbb{R},$$
since both $T_k (s)$ and $s-T_k (s)$ are non decreasing functions, we obtain
$$T(x,y) \geq \frac{\vert  T_{k}(v_{\epsilon})(x)-T_{k}(v_{\epsilon})(y)\vert^{p}}{\vert x-y\vert^{N+sp}}.$$

Therefore, we have the estimate
\begin{align*}
A(v_{\epsilon})\cdot T_{k}(v_{\epsilon})\geq 
\int\limits_{\mathbb{R}^{2N}}\frac{\vert  T_{k}(v_{\epsilon})(x)-
T_{k}(v_{\epsilon})(y)\vert^{p}}{\vert x-y\vert^{N+sp}}\dd x\dd y = \|T_{k}(v_{\epsilon})\|_{X_p^s}^{p}.
\end{align*}

It follows from the continuous immersion $X_p^s\hookrightarrow L^{\theta_2}(\Omega)$ that (for a constant $C_1>0$)
\begin{equation}\label{CPL2_23.5}
C_1 \left(\int_{\Omega}\vert T_{k}(v_{\epsilon})\vert^{\theta_2} \dd x\right)^{p/\theta_2}\leq A(v_{\epsilon})\cdot T_{k}(v_{\epsilon}).
\end{equation}

So, \eqref{CPL2_23.4} and \eqref{CPL2_23.5} guarantee the existence of $C>0$ such that 
\begin{equation*}
\int_{\Omega}\vert T_{k}(v_{\epsilon})\vert^{\theta_2} \dd x\leq C\vert\Omega_{k}\vert^{\theta_2/\theta_3(p-1)}=C\vert\Omega_{k}\vert^{\beta/\beta-1},
\end{equation*}
where $\beta=\dfrac{\theta_2}{\theta_2-\theta_3(p-1)}>1$, the last inequality being a consequence of $p<\dfrac{\theta_2}{\theta_3}+1$.

Since, for all $s\in\mathbb{R}$, we have $\vert T_k(s)\vert=(\vert s\vert-k)(1-\chi_{[-k,k]}(s))$, we conclude that, if $0<k<h\in\mathbb{N}$, then $\Omega_{h}\subset\Omega_k$. Therefore
\begin{align*}
\int_{\Omega}\vert T_{k}(v_{\epsilon})\vert^{\theta_2} \dd x =&\int_{\Omega_k}(\vert v_{\epsilon}\vert-k)^{\theta_2}\dd x\geq \int_{\Omega_h}(\vert v_{\epsilon}\vert-k)^{\theta_2}\dd x\geq (h-k)^{\theta_2}\vert\Omega_h\vert.
\end{align*}

Defining, for $0<k\in\mathbb{N}$, $$\phi(k)=\vert\Omega_k\vert,$$  it follows 
\begin{equation*}
\phi(h)\leq C(h-k)^{-\theta_2}\phi(k)^{\beta/(\beta-1)}, \quad 0<k<h\in\mathbb{N}.
\end{equation*}

Considering the sequence $(k_n)$ defined by $k_0=0$ and $k_n=k_{n-1}+d/2^n$, where  $d=2^{\beta}C^{1/\theta_2}\vert\Omega\vert^{1/(p-1)\theta_2}$, we have $0\leq\phi(k_n)\leq\phi(0)/(2^{nr(\beta-1)})$ for all $n\in\mathbb{N}$. Thus $ {\lim_{n\to\infty}}\phi(k_n)=0$.

Since $\phi(k_n)\geq\phi(d)$ implies $\phi(d)=0$, we have $\vert v_\epsilon(x)\vert\leq d$ a.e. in $\Omega$, for all $\epsilon\in(0,1)$. We are done.
$\hfill\Box$\end{proof}\vspace*{.5cm}

We recall the definitions of the spaces $C_{\delta}^0(\overline{\Omega})$ and $C_{\delta}^{0,\alpha}(\overline{\Omega})$. For this, let $\delta:\overline{\Omega}\to\mathbb{R}^+$ be given by
$\delta(x)=\textup{dist}(x,\mathbb{R}^N\setminus\Omega)$. Then, for $0<\alpha<1$, we have
\begin{align*}
C_{\delta}^{0}(\overline{\Omega})&=\left\{u\in C^{0}(\overline{\Omega})
\;:\;\frac{u}{\delta^{s}}\;\textrm{ has a continuous extension to } \overline{\Omega}\right\}\\
C_{\delta}^{0,\alpha}(\overline{\Omega})&=\left\{u\in C^0(\overline{\Omega})\;:\;\frac{u}{\delta^{s}}\;\textrm{ has a $\alpha$-Hölder extension to } \overline{\Omega}\right\}
\end{align*}
with the respective norms
$$\|u\|_{0,\delta}=\left\|\displaystyle\frac{u}{\delta^{s}}\right\|_{L^{\infty}(\Omega)} \ \mbox{and}\  \|u\|_{\alpha,\delta}=\|u\|_{0,\delta}+\displaystyle{\sup_{x,y\in\overline{\Omega},\,x\neq y.}}\frac{\left\vert u(x)/\delta(x)^{s}- u(y)/\delta(y)^{s}\right\vert}{\vert x-y\vert^{\alpha}}. $$

Let us consider the Dirichlet problem
\begin{align}\label{a.2}
\left\{\begin{array}{rlll}
(-\Delta)^{s}_{p} u&=&f(u) &\textrm{in}\ \Omega,\\
u&=&0 &\textrm{in}\ \mathbb{R}^N\setminus\Omega,
\end{array}
\right.
\end{align}
where $\Omega\subset\mathbb{R}^N$ ($N>1$) is a bounded, smooth domain, $s\in(0,1)$, $p>1$ and $f\in L^{\infty}(\Omega)$.

The next two results can be found in Iannizzotto, Mosconi and Squassina \cite{iannizzoto2}, Theorems 1.1 and 4.4, respectively. They will play a major role in the proof of Theorem \ref{t1}.

\begin{proposition}\label{a.3}
There exist $\alpha\in(0,s]$ and $C_{\Omega}>0$ depending only on $N$, $p$, $s$, with $C_{\Omega}$ also depending on $\Omega$, such that, for all weak solution $u\in X^s_p$ of \eqref{a.2}, $u\in C^\alpha(\overline{\Omega})$ and
$$\|u\|_{C^{\alpha}(\overline{\Omega})} \leq C_{\Omega}\|f\|_{L^{\infty}(\Omega)}^{\frac{1}{p-1}}.$$
\end{proposition}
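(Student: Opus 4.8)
Following the strategy of \cite{iannizzoto2} (Theorem 1.1), the plan is to combine a De Giorgi type $L^\infty$ bound, the interior H\"older theory for the fractional $p$-Laplacian, and a boundary barrier argument, and then to glue the interior and boundary estimates by a covering argument. First, by the $(p-1)$-homogeneity of $(-\Delta)^s_p$ one reduces to the normalized case $\|f\|_{L^\infty(\Omega)}\le 1$: if $u$ solves \eqref{a.2}, then $v:=\mu^{-1}u$ with $\mu:=\|f\|_{L^\infty(\Omega)}^{1/(p-1)}$ solves the same problem with right-hand side of sup norm $\le 1$, so it suffices to prove $\|v\|_{C^\alpha(\overline{\Omega})}\le C_\Omega$ and multiply back by $\mu$ at the end.

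Second, I would establish the uniform bound $\|u\|_{L^\infty(\mathbb{R}^N)}\le C(N,p,s,\Omega)$, which is exactly the Stampacchia/De Giorgi iteration already carried out in the proof of Proposition \ref{CPL2_23.1}: test \eqref{a.2} with the truncations $T_k(u)$, use the algebraic inequality $|s-t|^{p-2}(s-t)(T_k(s)-T_k(t))\ge|T_k(s)-T_k(t)|^p$ together with the embedding $X_p^s\hookrightarrow L^{\theta}(\Omega)$ to bound $\|T_k(u)\|_{L^{\theta}(\Omega)}$ by a superlinear power of $|\Omega_k|$, and run the iteration on $\phi(k)=|\Omega_k|$ until $\phi(d)=0$ (here we use $u\equiv 0$ outside $\Omega$, so the bound holds on all of $\mathbb{R}^N$).

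Third, interior and boundary regularity. Since $u$ is now bounded on $\mathbb{R}^N$ with bounded source, the now-classical interior H\"older estimates for the fractional $p$-Laplacian yield $u\in C^{\alpha_0}_{\mathrm{loc}}(\Omega)$ for some $\alpha_0\in(0,s]$, with estimates on balls $B_r(x_0)\Subset\Omega$ controlled by $\|u\|_{L^\infty(\mathbb{R}^N)}+\|f\|_{L^\infty(\Omega)}^{1/(p-1)}$, the constant deteriorating as $B_r(x_0)$ approaches $\partial\Omega$. To reach $\overline{\Omega}$ I would use barriers built from $\delta$: since $\partial\Omega$ is smooth it satisfies uniform interior and exterior ball conditions, so on small half-balls near a boundary point one constructs sub- and supersolutions of $(-\Delta)^s_p$ comparable to $\delta(x)^s$ (conveniently, a rescaled piece of the $s$-torsion function of a ball, whose boundary behavior $\asymp\delta^s$ is classical); comparison then forces $|u(x)|\le C\,\delta(x)^s$ in a neighborhood of $\partial\Omega$, and iterating this on dyadically shrinking layers $\{\delta\sim 2^{-j}\}$, controlling at each scale the nonlocal tail produced by the trivial extension by $0$, upgrades the pointwise bound to an oscillation decay $\operatorname{osc}_{B_r(x_0)\cap\Omega}u\le C\,r^{\alpha}$ at boundary points.

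Finally, a standard covering/gluing argument merges the two estimates: for $x,y\in\overline{\Omega}$, either both lie at distance $\gtrsim|x-y|$ from $\partial\Omega$, in which case the interior estimate applies on a ball of radius comparable to that distance, or one of them is within $O(|x-y|)$ of $\partial\Omega$, in which case the boundary estimate applies; the two combine into a single global H\"older estimate with exponent $\alpha=\min\{\alpha_0,s\}$ (possibly smaller), and undoing the normalization of the first step gives $\|u\|_{C^\alpha(\overline{\Omega})}\le C_\Omega\|f\|_{L^\infty(\Omega)}^{1/(p-1)}$ with $C_\Omega=C_\Omega(N,p,s,\Omega)$. The main obstacle is the boundary step: because $(-\Delta)^s_p w$ at a point near $\partial\Omega$ depends on all of $w$, including its extension by $0$ outside $\Omega$, the barriers must dominate (respectively be dominated by) that tail uniformly across scales, which is precisely why the natural boundary profile is $\delta^s$ rather than $\delta$ and why one cannot expect $\alpha>s$.
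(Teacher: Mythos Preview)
The paper does not give its own proof of this proposition: it is quoted verbatim as Theorem~1.1 of Iannizzotto, Mosconi and Squassina \cite{iannizzoto2} and is used as a black box. So there is no argument in the paper to compare against; the paper's ``proof'' is simply the citation.

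Your sketch is a reasonable and essentially correct outline of how \cite{iannizzoto2} proceeds: normalize via the $(p-1)$-homogeneity of $(-\Delta)^s_p$, obtain a global $L^\infty$ bound by a Stampacchia-type iteration (as in Proposition~\ref{CPL2_23.1}), invoke the interior H\"older theory for the fractional $p$-Laplacian, establish the boundary behavior $|u|\lesssim\delta^s$ by barrier comparison (which is precisely Proposition~\ref{a.4}, i.e., Theorem~4.4 of \cite{iannizzoto2}), iterate to get boundary oscillation decay, and then glue interior and boundary estimates. You also correctly identify why the H\"older exponent cannot exceed $s$ and why the nonlocal tail is the delicate point in the boundary step. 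In short, your proposal matches the cited source rather than anything the present paper does, and there is nothing further to compare.
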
 
\begin{proposition}\label{a.4}
Let $u\in X^s_p$  satisfies $\left|(-\Delta)_{p}^{s} u\right| \leq K$ weakly in $\Omega$ for some $K>0$. Then 
$$|u| \leq\left(C_{\Omega} K\right)^{\frac{1}{p-1}} \delta^{s}\quad a.e.\,\textrm{ in }\Omega,$$ 
for some $C_{\Omega}=C(N, p, s, \Omega)$. 
\end{proposition}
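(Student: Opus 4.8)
The plan is to combine a global $L^\infty$ bound with a comparison (barrier) argument near $\partial\Omega$, using that $\Omega$, being bounded and smooth, satisfies a uniform exterior sphere condition. Two reductions come first. Since $(-\Delta)^s_p$ is positively homogeneous of degree $p-1$, replacing $u$ by $tu$ turns $|(-\Delta)^s_p u|\le K$ into $|(-\Delta)^s_p(tu)|\le t^{p-1}K$ while the claimed estimate is invariant under this substitution; hence it suffices to treat $K=1$, the dependence $(C_\Omega K)^{1/(p-1)}$ being recovered at the end by scaling. Moreover $-u$ also satisfies $|(-\Delta)^s_p(-u)|\le K$, so it is enough to prove the one-sided bound $u\le C_\Omega^{1/(p-1)}\delta^s$ a.e.\ in $\Omega$. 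Applying Proposition \ref{a.3} with $f:=(-\Delta)^s_p u$ (so $\|f\|_{L^\infty(\Omega)}\le1$) gives $u\in C^\alpha(\overline\Omega)$ for some $\alpha\in(0,s]$ with $\|u\|_{L^\infty(\Omega)}\le\|u\|_{C^\alpha(\overline\Omega)}\le M$, $M$ depending only on $N,p,s,\Omega$; in particular the desired inequality already holds on $\{\delta\ge\sigma\}$ for any fixed $\sigma>0$, with constant $M\sigma^{-s}$. (If one knew $\alpha=s$ the proof would finish here, since $|u(x)|=|u(x)-u(z(x))|\le\|u\|_{C^s}\,\delta(x)^s$ for a nearest boundary point $z(x)$; the point of the theorem is that it holds with the optimal exponent $s$ even when $\alpha<s$.) It remains to bound $u$ on the strip $\{0<\delta<\sigma\}$.

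For the strip I would use barriers built from the exterior balls. By smoothness there is $\rho_0>0$ such that every $z\in\partial\Omega$ lies on $\partial B_{\rho_0}(y_z)$ with $B_{\rho_0}(y_z)\subseteq\mathbb{R}^N\setminus\Omega$. Setting $d_z(x):=\operatorname{dist}\!\big(x,B_{\rho_0}(y_z)\big)=(|x-y_z|-\rho_0)_+$, define, for a small radius $\sigma>0$,
\begin{equation*}
W_z(x):=\big(d_z(x)\wedge\sigma\big)^{s},
\end{equation*}
which is nonnegative on $\mathbb{R}^N$, vanishes on $B_{\rho_0}(y_z)$, equals $\sigma^s$ outside $B_{\rho_0+\sigma}(y_z)$, and behaves like $\operatorname{dist}(x,\partial B_{\rho_0}(y_z))^s$ in the intermediate shell. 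The crux is the barrier estimate: there exist $\sigma>0$ and $c_0>0$, depending only on $N,p,s,\Omega$ (and, by translation and rotation invariance, not on $z$), such that
\begin{equation*}
(-\Delta)^s_p W_z\ \ge\ c_0\qquad\text{weakly in }\ \Omega\cap B_{\rho_0+\sigma}(y_z).
\end{equation*}
Essentially all the difficulty is here, and it is where $p\neq2$ makes the computation delicate: one must evaluate $(-\Delta)^s_p$ of a truncated power of the distance to a ball and check that the contribution near $\partial B_{\rho_0}(y_z)$, where $W_z$ vanishes, does not swamp $c_0$. The fact that prevents a $\delta^{-s}$ blow-up is that the one-dimensional profile $t\mapsto(t)_+^s$ is exactly $(s,p)$-harmonic in a half-space; the curvature of the exterior ball and the truncation then contribute only bounded perturbations, whose size can be controlled for $\sigma$ small. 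This nonlocal computation is the content of \cite{iannizzoto2}, which I would follow.

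Granting the barrier estimate, I conclude by comparison plus a covering. Fix $z\in\partial\Omega$, set $V_z:=\Omega\cap B_{\rho_0+\sigma}(y_z)$, and take $\lambda:=\max\{c_0^{-1/(p-1)},\,M\sigma^{-s}\}$. Then $(-\Delta)^s_p(\lambda W_z)=\lambda^{p-1}(-\Delta)^s_p W_z\ge c_0\lambda^{p-1}\ge1\ge(-\Delta)^s_p u$ weakly in $V_z$, and on $\mathbb{R}^N\setminus V_z$ one has $\lambda W_z\ge u$: on $\mathbb{R}^N\setminus\Omega$ because $\lambda W_z\ge0=u$, and on $\Omega\setminus B_{\rho_0+\sigma}(y_z)$ because there $W_z\equiv\sigma^s$, so $\lambda W_z\ge M\ge u$. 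The weak comparison principle for $(-\Delta)^s_p$ then gives $u\le\lambda W_z$ a.e.\ in $V_z$. Now let $x\in\Omega$ with $\delta(x)<\sigma$ and let $z(x)\in\partial\Omega$ be a nearest boundary point; then $x$ lies on the inward normal at $z(x)$, so $|x-y_{z(x)}|-\rho_0=\delta(x)$, hence $x\in V_{z(x)}$ and $W_{z(x)}(x)=\delta(x)^s$, giving $u(x)\le\lambda\,\delta(x)^s$. Combined with the bound on $\{\delta\ge\sigma\}$, this yields $u\le C_\Omega^{1/(p-1)}\delta^s$ a.e.\ in $\Omega$ with $C_\Omega^{1/(p-1)}$ of the order of $\lambda$; applying the same to $-u$ gives $|u|\le C_\Omega^{1/(p-1)}\delta^s$, and restoring a general $K$ by homogeneity produces the factor $(C_\Omega K)^{1/(p-1)}$. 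The main obstacle, as stressed above, is the uniform strict-supersolution property of the exterior-ball barrier for $p\neq2$, i.e.\ the nonlocal computation behind the inequality $(-\Delta)^s_pW_z\ge c_0$.
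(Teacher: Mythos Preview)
The paper does not prove this proposition at all: it is merely quoted from Iannizzotto, Mosconi and Squassina \cite{iannizzoto2} (their Theorem~4.4), so there is no ``paper's own proof'' to compare against. Your sketch is in fact an accurate outline of the strategy in \cite{iannizzoto2}: the homogeneity reduction to $K=1$, the one-sided reduction via $u\mapsto -u$, the global $L^\infty$ bound (which you obtain from Proposition~\ref{a.3}, a shortcut the original does not take but which is legitimate here), and the barrier argument based on the exterior-ball condition combined with the weak comparison principle. You correctly isolate the hard step --- showing that the truncated $s$-th power of the distance to an exterior ball is a uniform strict supersolution in the adjacent shell --- and you rightly point out that this rests on the $(s,p)$-harmonicity of $(t)_+^s$ in a half-line together with a perturbation analysis for curvature and truncation. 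Since you explicitly defer that computation to \cite{iannizzoto2}, your write-up is really a roadmap rather than a self-contained proof; as such it is sound, and it matches what the paper itself does, namely invoke \cite{iannizzoto2}.
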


The proof of the next result is similar to that of \cite[Theorem 1]{BHM}. We emphasize that, only in this section, $\delta$ represents the function defined by $\delta(x)=\textup{dist}(x,\partial\Omega)$. \vspace*{.2cm}

\begin{theorem}\label{t1}Suppose that $g\in C(\Omega)$ satisfies 
\begin{equation}\label{CPL2_24.0}
\vert g(t)\vert\leq C(1+\vert t\vert^{q-1}),\, \textrm{ for some }\,1\leq q\leq p^*_s\,\textrm{ and }\,C>0
\end{equation}
and consider the functional $\Phi:X_p^s\to\mathbb{R}$ defined by $$\Phi(u)=\dfrac{1}{p}\|u\|_{X_p^s}^p-\displaystyle\int_{\Omega} G(u)\dd x,$$ where $G(t)=\int_{0}^{t}g(s)\dd s$.

If $0$ is a local minimum of $\Phi$ in $C_{\delta}^0(\overline{\Omega})$, that is, there exists $r_1>0$ such that
\begin{equation*}
\Phi(0)\leq\Phi(z),\;\forall\;z\in X_p^s\cap C_{\delta}^0(\overline{\Omega}),\;\|z\|_{0,\delta}\leq r_1,
\end{equation*}
then $0$ is a local minimum of $\Phi$ in $X_p^s$, that is, there exists $r_2>0$ such that $$\Phi(0)\leq\Phi(z),\;\forall\; z\in X_p^s,\;\|z\|_{X_p^s}\leq r_2.$$
\end{theorem}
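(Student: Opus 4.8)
The plan is to argue by contradiction, following the classical Brezis--Nirenberg strategy adapted to the fractional setting as in \cite{barrios} and \cite{iannizzoto}. Suppose $0$ is a local minimum of $\Phi$ in $C_\delta^0(\overline\Omega)$ but not in $X_p^s$. Then for every $\epsilon\in(0,1)$ the infimum
$$m_\epsilon:=\inf\bigl\{\Phi(u)\;:\;u\in X_p^s,\ \|u\|_{X_p^s}\le\epsilon\bigr\}$$
satisfies $m_\epsilon<\Phi(0)=0$ and is attained at some $v_\epsilon$ with $\|v_\epsilon\|_{X_p^s}\le\epsilon$; attainment follows from the weak lower semicontinuity of $\|\cdot\|_{X_p^s}^p$, the compact embedding $X_p^s\hookrightarrow L^r(\Omega)$ for $r<p_s^*$ (so the term $\int_\Omega G(u)\,\dd x$ is weakly continuous under the subcritical growth \eqref{CPL2_24.0}, with the borderline case $q=p_s^*$ handled by noting that a minimizer over a ball of subcritical functionals behaves well, or by first treating $q<p_s^*$ and then passing to the limit), and the closedness and convexity of the ball. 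Since $m_\epsilon<0$, we have $\|v_\epsilon\|_{X_p^s}\to 0$ as $\epsilon\to 0$, so in particular $v_\epsilon\ne 0$ and the constraint is not active for small $\epsilon$ — wait, this needs care: if the constraint were inactive, $v_\epsilon$ would be an unconstrained local minimizer, which is fine; if it is active, the Lagrange multiplier rule gives
$$(1-\xi_\epsilon)\,A(v_\epsilon)\cdot\varphi=\int_\Omega g(v_\epsilon)\varphi\,\dd x\qquad\text{for all }\varphi\in X_p^s,$$
for some multiplier $\xi_\epsilon\le 0$ (the sign coming from minimizing on a ball), i.e. $v_\epsilon$ solves \eqref{CPL2_23.2}.

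Next I would apply the regularity machinery. By Proposition \ref{CPL2_23.1}, the family $(v_\epsilon)$ is uniformly bounded in $L^\infty(\Omega)$; hence the right-hand side $(1-\xi_\epsilon)^{-1}g(v_\epsilon)$ is uniformly bounded in $L^\infty(\Omega)$. Proposition \ref{a.3} then yields a uniform bound on $\|v_\epsilon\|_{C^\alpha(\overline\Omega)}$ for some $\alpha\in(0,s]$, and Proposition \ref{a.4} gives $|v_\epsilon|\le (C_\Omega K)^{1/(p-1)}\delta^s$ a.e.\ in $\Omega$ with $K$ uniform. Combining the interior Hölder bound with the boundary decay estimate, one upgrades this to a uniform bound in $C_\delta^{0,\alpha}(\overline\Omega)$: this is the standard ``bootstrap to the weighted Hölder space'' step, and it is carried out exactly as in \cite[Theorem 1]{BHM}. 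Since the embedding $C_\delta^{0,\alpha}(\overline\Omega)\hookrightarrow C_\delta^0(\overline\Omega)$ is compact, a subsequence of $(v_\epsilon)$ converges in $C_\delta^0(\overline\Omega)$; because $\|v_\epsilon\|_{X_p^s}\to 0$ forces $v_\epsilon\to 0$ in $L^p(\Omega)$, the limit must be $0$, so $\|v_\epsilon\|_{0,\delta}\to 0$.

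Now comes the contradiction: for $\epsilon$ small enough, $\|v_\epsilon\|_{0,\delta}\le r_1$, so the hypothesis that $0$ is a local minimum in $C_\delta^0(\overline\Omega)$ gives $\Phi(v_\epsilon)\ge\Phi(0)=0$, contradicting $\Phi(v_\epsilon)=m_\epsilon<0$. Hence $0$ is a local minimum of $\Phi$ in $X_p^s$.

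The main obstacle I anticipate is the critical growth borderline $q=p_s^*$ in two places: first, ensuring that $m_\epsilon$ is actually attained (a lack of compactness in $L^{p_s^*}$ could a priori destroy weak lower semicontinuity of $u\mapsto\Phi(u)$ on the ball), and second, making sure the bound on $\|g(v_\epsilon)\|_{L^\infty}$ that feeds into Propositions \ref{a.3}--\ref{a.4} is genuinely uniform in $\epsilon$. Proposition \ref{CPL2_23.1} is designed precisely to handle the $L^\infty$ bound even in the critical case, so the second point reduces to checking that the Lagrange-multiplier problem \eqref{CPL2_23.2} with $\xi_\epsilon\le0$ is exactly the hypothesis of that proposition — which it is. For the first point, the cleanest route is to observe that on a small ball $\{\|u\|_{X_p^s}\le\epsilon\}$ the functional $u\mapsto\frac1p\|u\|^p_{X_p^s}-\int_\Omega G(u)\,\dd x$ is, after using $|G(t)|\le C(1+|t|^{q})$ and the Sobolev inequality, coercive-from-below and its sublevel sets are weakly compact; the critical term, being the only obstruction, is controlled because we are minimizing (not looking for a saddle) on a ball whose radius we send to zero, so no concentration can occur in the limit — and the attainment for fixed small $\epsilon$ can be secured by the concentration-compactness principle or, as in \cite{barrios,iannizzoto}, by a direct truncation argument. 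Everything else is routine bootstrapping identical to \cite[Theorem 1]{BHM}.
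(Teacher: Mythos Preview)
Your proposal is correct and follows essentially the same contradiction strategy as the paper: minimize $\Phi$ over the closed $\epsilon$-ball in $X_p^s$, derive the Euler--Lagrange inequality with a nonpositive multiplier, bootstrap via Propositions~\ref{CPL2_23.1}, \ref{a.3}, \ref{a.4} to weighted-H\"older regularity, and conclude $v_\epsilon\to 0$ in $C_\delta^0(\overline\Omega)$.

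The one substantive difference is the handling of the critical case $q=p_s^*$. Your remark that ``no concentration can occur in the limit'' does not, by itself, secure attainment of $m_\epsilon$ for a \emph{fixed} $\epsilon$: concentration of a minimizing sequence is a local phenomenon in $\Omega$ and is not ruled out merely by the smallness of $\|u\|_{X_p^s}$. The paper resolves this exactly by the truncation you allude to: one replaces $g$ by $g_k=g\circ t_k$ (cutoff at level $k$), so that each $\Phi_k$ has subcritical growth and its infimum over $\bar B_\epsilon$ is attained; one then chooses $k=k_\epsilon$ large enough that $\Phi_{k_\epsilon}(w_\epsilon)<\Phi(0)$ for some $w_\epsilon\in\bar B_\epsilon$, and the minimizer $u_\epsilon$ of $\Phi_{k_\epsilon}$ is the object to which the regularity machine is applied. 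Since $g_k$ still satisfies \eqref{CPL2_24.0} uniformly in $k$, Proposition~\ref{CPL2_23.1} gives a uniform $L^\infty$ bound on $u_\epsilon$ independent of $k_\epsilon$, and the rest proceeds as in the subcritical case. Your sketch is right in spirit but this step should be made explicit rather than deferred to a concentration-compactness argument.

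A minor remark: your packaging of the regularity step---uniform $C_\delta^{0,\alpha}(\overline\Omega)$ bound plus the compact embedding $C_\delta^{0,\alpha}\hookrightarrow C_\delta^0$---is a clean alternative to the paper's two-step route (Arzel\`a--Ascoli for uniform convergence, then Proposition~\ref{a.4} for the weighted bound). Both work; yours is closer to how the result is stated in \cite{iannizzoto2}.
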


\begin{proof}
Let us consider initially the \textit{subcritical case}  $q<p_s^*$. By contradiction, denoting $\bar{B}_{\epsilon}=\{z\in {X_p^s}\;:\;\|z\|_{X_p^s}\leq\epsilon\}$, let us suppose that, for any  $\epsilon>0$, there exists $u_\epsilon\in \bar{B}_{\epsilon}$ such that
\begin{equation}\label{CPL2_24.1}
\Phi(u_\epsilon)<\Phi(0).
\end{equation} 

Since $\Phi:\bar{B}_{\epsilon}\to\mathbb{R}$ is weakly lower semicontinuous, 
there exists $v_\epsilon\in \bar{B}_{\epsilon}$ such that $\displaystyle\inf_{u\in \bar{B}_{\epsilon}}\Phi(u)=\Phi(v_\epsilon)$. It follows from \eqref{CPL2_24.1} that $$\Phi(v_\epsilon)=\inf_{u\in\bar{B}_{\epsilon}}\Phi(u)\leq\Phi(u_\epsilon)<\Phi(0).$$	
We will show that 
\begin{equation*}
v_{\epsilon}\rightarrow 0\textrm{ in } C_\delta^0(\overline{\Omega}) \textrm{ as } \epsilon\rightarrow 0,
\end{equation*}
since this implies, for $r_1>0$, 
the existence of $ z \in C_\delta^0(\overline{\Omega})$, such that $\|z\|_{0,\delta}<r_1$ and 
$\Phi(z)<\Phi(0)$, contradicting our hypothesis.

Since $v_\epsilon$ is a critical point of $\Phi$ in $X^s_p$, by Lagrange multipliers it is not difficult to verify 
\begin{equation}\label{multipliers}
\Phi'(v_{\epsilon})= \xi_{\epsilon}A(v_{\epsilon})
\end{equation}
implies $\xi_{\epsilon}\leq 0$.

Thus, it follows from \eqref{multipliers} that $v_{\epsilon}$ satisfies 
$$
\left\{
\begin{array}{rclll}
 (-\Delta)_{p}^s v_{\epsilon} &=& \left(\displaystyle\frac{1}{1-\xi_{\epsilon}}\right)g(v_{\epsilon}) =: g^{\epsilon}(v_{\epsilon})  &\textrm{ in } &\Omega,\\
v_{\epsilon}&=&0  &\textrm{ in } &\mathbb{R}^N\backslash \Omega,
\end{array} 
\right.  \leqno{(Pv)}
$$
If $ \|v_{\epsilon}\|_{X_p^s} \leq\epsilon<1$, Proposition \ref{CPL2_23.1} shows the existence of a constant $C_1>0$, not depending on $\epsilon$, such that
\begin{equation}\label{C2P_24.11.0}
\|v_{\epsilon}\|_{L^{\infty}(\Omega)} \leq C_1.
\end{equation}

Since $\xi_{\epsilon}\leq 0$,  \eqref{CPL2_24.0} and \eqref{C2P_24.11.0} imply that 
\begin{equation*}
\|g^{\epsilon}(v_{\epsilon})\|_{L^{\infty}(0,1)}\leq C_2
\end{equation*}
for some constant $C_2>0$.

Proposition \ref{a.3} yields $\|v_{\epsilon}\|_{C^{0,\beta}(\overline{\Omega})} \leq C_3$, for $0<\beta\leq s$ and a constant $C_3$ not depending on $\epsilon$. Now, it follows from Arzelà-Ascoli theorem the existence of a \textit{sequence} $(v_\epsilon)$ such that $v_{\epsilon}\rightarrow 0$ uniformly as $\epsilon\rightarrow 0$. Passing to a subsequence, we can suppose that $v_\epsilon\to 0$ a.e. in $\Omega$ and, therefore, $v_{\epsilon}\to 0$ uniformly in $\overline{\Omega}$. But now follows from Proposition \ref{a.4} that 
\begin{equation*}
\|v_{\epsilon}\|_{0,\delta}=\biggl\| \frac{v_{\epsilon}}{\delta^{s}} \biggl\|_{L^{\infty}(\Omega)} \leq  C \sup_{x\in (0,1)} |g^{\epsilon}(v_{\epsilon}(x))|
\end{equation*}
for a constant $C>0$. The proof of the subcritical case is complete.

We now consider the critical case $q=p_s^*$. As before, we argument by contradiction. For this, we define $g_k, G_k :\mathbb{R}\to\mathbb{R}$ by
$$g_k(s)=g(t_k(s))\quad \textrm{ and }\quad G_k(t)=\int_{0}^tg_k(s)\dd s,$$ 
with $t_k$ given by 
$$t_{k}(s)= \left\{ \begin{array}{rl}
-k, & \textrm{if}\quad s\leq-k \\	
s, &\textrm{if}\quad -k<s<k \\
k, &\textrm{if}\quad s\geq k.
\end{array} \right.$$

Considering $\Phi_{k}\in C^1({X_p^s},\mathbb{R})$ given by
$$\Phi_k(u)=\frac{\|u\|_{X_p^s}^p}{p}-\int_{\Omega}G_k(t)\dd t,$$
it follows $\Phi_k(u)\to\Phi(u)$ as $k\to\infty$. Thus, for any $\epsilon\in(0,1)$, there exists $k_{\epsilon}\geq 1$ such that $\Phi_{k_\epsilon}(w_\epsilon)<\Phi(0)$
and the subcritical growth of $g_k$ guarantees the existence of $u_\epsilon\in \bar{B}_{\epsilon}$ such that $$\Phi_{k_\epsilon}(u_\epsilon)=\displaystyle{\inf_{u\in \bar{B}_\epsilon}}\Phi_{k_\epsilon}(u)\leq\Phi_{k_\epsilon}(w_\epsilon)<\Phi(0).$$ 

As in the subcritical case, we find $\xi_\epsilon\leq 0$ such that $u_\epsilon$ is a weak solution to the problem  $(Pv)$ with  $u_\epsilon$ instead of  $v_\epsilon.$

From definition of $g_k,$ and since $\|u_\epsilon\|_{X_p^s}\leq\epsilon<1$, by applying Proposition \ref{CPL2_23.2} we obtain
\begin{equation*}
\|g_{k_\epsilon}^{\epsilon}(u_{\epsilon})\|_{L^{\infty}(\Omega)}\leq C_2
\end{equation*}
for a constant $C_2>0$. It follows from  Proposition \ref{a.3} that $\|u_{\epsilon}\|_{C^{0,\beta}(\overline{\Omega})} \leq C_3$, for $0<\beta\leq s$, the constant $C_3$ not depending on $\epsilon$. By applying Arzelà-Ascoli theorem, the conclusion is now obtained as in the subcritical case. 
$\hfill\Box$\end{proof}

\begin{remark}\label{strict}
If $0$ a strict local minimum in $C_{\delta}^{0}(\overline{\Omega})$, it follows that $0$ is also a strict local minimum in $X_p^s$.
\end{remark}

\section{Positive and Negative solutions}\label{Positivenegative}
Most of the results in this section are standard. Therefore, our presentation will be only schematic. 

We denote 
\begin{equation}\label{C3N_9}
S_{p,s}=\inf\left\lbrace\dfrac{\|u\|_{X_p^s}^p}{\left(\displaystyle\int_\Omega\vert u\vert^{p_s^*}\dd x\right)^{\frac{p}{p_s^*}}}\;;\;u\in {X_p^s}, u\neq 0\right\rbrace
\end{equation}
the best constant of the immersion ${X_p^s}\hookrightarrow L^{p_s^*}(\Omega)$, see \cite{mosconi}. 

If we take care of the operator $A$, the proof of the next two results is similar to that exposed in \cite{paiva1}.
\begin{lemma}
If $ a> \lambda_1$, $b>0$, $1<q<p$ and $\lambda>0$, then any $(PS)$-sequence of	$I_{\lambda,s}$ is bounded in $X_p^s.$
\end{lemma}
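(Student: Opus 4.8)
The plan is to show that any Palais--Smale sequence $(u_n)\subset X_p^s$ for $I_{\lambda,s}$ at an arbitrary level $c$ is bounded. Recall that a $(PS)_c$-sequence satisfies $I_{\lambda,s}(u_n)\to c$ and $I_{\lambda,s}'(u_n)\to 0$ in $(X_p^s)^*$, so in particular $I_{\lambda,s}'(u_n)\cdot u_n = o(\|u_n\|_{X_p^s})$. The standard device is to form the combination $I_{\lambda,s}(u_n)-\tfrac{1}{p_s^*}I_{\lambda,s}'(u_n)\cdot u_n$, since this kills the critical term $b\int_\Omega(u_n^+)^{p_s^*}\dd x$, which is the only term with a ``bad'' sign that cannot be absorbed. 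First I would write out
\begin{equation*}
I_{\lambda,s}(u_n)-\frac{1}{p_s^*}I_{\lambda,s}'(u_n)\cdot u_n
=\Big(\frac1p-\frac1{p_s^*}\Big)\|u_n\|_{X_p^s}^p
+\lambda\Big(\frac1q-\frac1{p_s^*}\Big)\int_\Omega|u_n|^q\,\dd x
-a\Big(\frac1p-\frac1{p_s^*}\Big)\int_\Omega|u_n|^p\,\dd x,
\end{equation*}
and note that, since $1<q<p<p_s^*$, the coefficient $(1/p-1/p_s^*)$ is a positive constant, the coefficient $(1/q-1/p_s^*)$ is positive, and $\lambda>0$, so the middle term is nonnegative and may be discarded. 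The left-hand side is bounded above by $c+1+\|u_n\|_{X_p^s}$ for $n$ large, by the $(PS)$ hypothesis.

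The remaining obstacle is to dominate the term $a(1/p-1/p_s^*)\int_\Omega|u_n|^p\,\dd x$. This is the key step. The point is that $\int_\Omega|u_n|^p\,\dd x=\|u_n\|_{L^p(\Omega)}^p$ is a \emph{lower-order} quantity compared with $\|u_n\|_{X_p^s}^p$: by the continuous (indeed compact) embedding $X_p^s\hookrightarrow L^p(\Omega)$ we have $\|u_n\|_{L^p(\Omega)}\le C\|u_n\|_{X_p^s}$, which only gives a term of the same order $p$ and would not suffice by itself if $aC^p$ is large. However, because $p>q\ge 1$, one can instead interpolate or simply use that for any $\eta>0$ there is $C_\eta>0$ with $t^p\le \eta\, t^p + C_\eta$ --- wait, that is vacuous; the correct route is Young's inequality applied to the Sobolev constant: using $X_p^s\hookrightarrow L^{p_s^*}(\Omega)$ and Hölder on the bounded domain $\Omega$, $\int_\Omega|u_n|^p\,\dd x\le |\Omega|^{1-p/p_s^*}\big(\int_\Omega|u_n|^{p_s^*}\big)^{p/p_s^*}$, but this reintroduces the critical term. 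The cleanest argument, which I expect to be the one intended, is different and does \emph{not} try to absorb $\int_\Omega|u_n|^p$ into $\|u_n\|_{X_p^s}^p$ directly; instead one argues by contradiction.

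So the plan is: suppose $\|u_n\|_{X_p^s}\to\infty$ along a subsequence and set $w_n=u_n/\|u_n\|_{X_p^s}$, so $\|w_n\|_{X_p^s}=1$. Dividing the displayed identity by $\|u_n\|_{X_p^s}^p$ and using boundedness of the left side together with discarding the nonnegative $q$-term, we get
\begin{equation*}
\Big(\frac1p-\frac1{p_s^*}\Big)\le a\Big(\frac1p-\frac1{p_s^*}\Big)\int_\Omega|w_n|^p\,\dd x + \frac{C}{\|u_n\|_{X_p^s}^p}+\frac{\|u_n\|_{X_p^s}^{1-p}}{1},
\end{equation*}
hence $\liminf_n\int_\Omega|w_n|^p\,\dd x\ge 1/a>0$. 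On the other hand, up to a subsequence $w_n\rightharpoonup w$ in $X_p^s$ and, by the compact embedding $X_p^s\hookrightarrow L^p(\Omega)$, $w_n\to w$ strongly in $L^p(\Omega)$, so $\int_\Omega|w|^p\,\dd x\ge 1/a>0$ and in particular $w\neq 0$. Then I would test $I_{\lambda,s}'(u_n)$ against $u_n$, divide by $\|u_n\|_{X_p^s}^{p}$, and examine the equation satisfied in the limit by $w$: after dividing, the term $A(u_n)\cdot u_n/\|u_n\|_{X_p^s}^p = 1$, the terms $\lambda\int|u_n|^q/\|u_n\|^p$ and $a\int|u_n|^p/\|u_n\|^p$ converge (the first to $0$ since $q<p$, the second to $a\int_\Omega|w|^p\,\dd x$), while the critical term $b\int_\Omega(u_n^+)^{p_s^*}/\|u_n\|_{X_p^s}^p$ is nonnegative; passing to the limit forces a relation that, combined with $w\neq0$ and $\lambda_1<a<\lambda^*$ and the variational characterization of $\lambda_1$ (namely $\|w\|_{X_p^s}^p\ge\lambda_1\|w\|_{L^p(\Omega)}^p$), yields a contradiction with $1\le \liminf\|w_n\|_{X_p^s}^p$. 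The delicate point --- and the step I expect to require the most care --- is justifying the passage to the limit in the critical term and in $A(u_n)\cdot u_n$ after rescaling, i.e. showing that the limit inequality genuinely contradicts the eigenvalue bound; this is exactly where the hypothesis $a<\lambda^*$ (as opposed to merely $a<$ something) and the weak-convergence bookkeeping for the nonlocal operator $A$ must be used carefully, and where following the argument of \cite{paiva1} adapted to $A$ is essential.
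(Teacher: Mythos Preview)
Your overall architecture is right --- forming $I_{\lambda,s}(u_n)-\tfrac{1}{p_s^*}I_{\lambda,s}'(u_n)\cdot u_n$, then normalizing $w_n=u_n/\|u_n\|_{X_p^s}$ and arguing by contradiction is exactly the route the paper intends (it simply defers to \cite{paiva1}). The displayed identity is correct, and you correctly extract $w_n\rightharpoonup w$ in $X_p^s$, $w_n\to w$ in $L^p(\Omega)$, and $\int_\Omega|w|^p\,\dd x\geq 1/a>0$.

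The gap is in the final paragraph, where you claim a contradiction but do not produce one. First, the hypothesis of this lemma is only $a>\lambda_1$; the condition $a<\lambda^{*}$ appears later in the paper but is \emph{not} assumed here, so you cannot use it. Second, the information you have so far --- $w\neq 0$, $\|w\|_{X_p^s}\leq 1$, $\int_\Omega|w|^p\geq 1/a$ --- combined with the variational inequality $\|w\|_{X_p^s}^p\geq\lambda_1\int_\Omega|w|^p$ yields only $1\geq\lambda_1/a$, which is \emph{true} since $a>\lambda_1$. No contradiction.

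What is missing is the use of the \emph{asymmetry} of the nonlinearity: the critical term involves only $(u^+)^{p_s^*}$. From the combination $I_{\lambda,s}(u_n)-\tfrac1p I_{\lambda,s}'(u_n)\cdot u_n\leq C(1+\|u_n\|_{X_p^s})$ one obtains $\int_\Omega(u_n^+)^{p_s^*}\,\dd x\leq C(1+\|u_n\|_{X_p^s})$, hence $\int_\Omega(w_n^+)^{p_s^*}\to 0$ and, by Fatou, $w^+=0$, i.e.\ $w\leq 0$. One then tests $I_{\lambda,s}'(u_n)$ against suitable functions (here the critical term disappears when tested against $u_n^-$) and passes to the limit after dividing by $\|u_n\|_{X_p^s}^{p-1}$ to conclude that $w$ is a nontrivial, constant-sign weak solution of $(-\Delta)_p^s w=a|w|^{p-2}w$. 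For the fractional $p$-Laplacian, only the first eigenvalue $\lambda_1$ admits a sign-definite eigenfunction, so this forces $a=\lambda_1$, contradicting $a>\lambda_1$. The phrase ``if we take care of the operator $A$'' in the paper refers precisely to this passage to the limit in $A(w_n)\cdot\varphi$, which for $p\neq 2$ is not automatic from weak convergence and requires an extra argument (e.g.\ via the $(S_+)$ property of $A$, or by first showing $\|w_n^+\|_{X_p^s}\to 0$ from the test with $u_n^+$).
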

\begin{lemma}\label{C3P_3}
If $a> \lambda_1, \; b>0$, $1<q<p$ and $\lambda>0$, then $I_{\lambda,s}$ satisfies the $(PS)$-condition at any level $C$ such that $$C<\dfrac{s}{N}b^\frac{sp-N}{sp}S_{p,s}^\frac{N}{sp}.$$
\end{lemma}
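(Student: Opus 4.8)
\textbf{Proof plan for Lemma \ref{C3P_3}.}

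The plan is to follow the now-classical Brezis--Nirenberg compactness argument, adapted to the fractional $p$-Laplacian setting. Let $(u_n)\subseteq X_p^s$ be a $(PS)_C$-sequence, so $I_{\lambda,s}(u_n)\to C$ and $I_{\lambda,s}'(u_n)\to 0$ in $(X_p^s)^*$. By the previous lemma $(u_n)$ is bounded in $X_p^s$, hence, up to a subsequence, $u_n\rightharpoonup u$ weakly in $X_p^s$, $u_n\to u$ strongly in $L^r(\Omega)$ for every $r\in[1,p_s^*)$, and $u_n\to u$ a.e.\ in $\Omega$. First I would check that $u$ is a weak solution of \eqref{principal}, i.e.\ $I_{\lambda,s}'(u)=0$: this uses the a.e.\ convergence together with the Brezis--Lieb-type splitting and the fact that $A$ is of type $(S_+)$ on bounded sequences (a standard property of the fractional $p$-Laplacian; in fact once one knows $A(u_n)\cdot(u_n-u)\to0$, strong convergence follows, which is exactly what we will eventually prove).

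The core step is the concentration--compactness dichotomy. Set $v_n=u_n-u$; by Brezis--Lieb,
\begin{align*}
\|u_n\|_{X_p^s}^p &= \|u\|_{X_p^s}^p + \|v_n\|_{X_p^s}^p + o(1),\\
\int_\Omega (u_n^+)^{p_s^*}\dd x &= \int_\Omega (u^+)^{p_s^*}\dd x + \int_\Omega (v_n^+)^{p_s^*}\dd x + o(1),
\end{align*}
while the lower-order terms $\int_\Omega|u_n|^q$ and $\int_\Omega|u_n|^p$ converge to $\int_\Omega|u|^q$ and $\int_\Omega|u|^p$ by the compact embedding. Feeding these into $I_{\lambda,s}(u_n)\to C$ and into $I_{\lambda,s}'(u_n)\cdot u_n\to0$, and using $I_{\lambda,s}'(u)\cdot u=0$, one extracts that $\|v_n\|_{X_p^s}^p$ and $b\int_\Omega (v_n^+)^{p_s^*}\dd x$ have a common limit, say $\ell\geq0$, and that
$$
C = I_{\lambda,s}(u) + \Big(\frac1p-\frac1{p_s^*}\Big)\ell = I_{\lambda,s}(u) + \frac{s}{N}\,\ell .
$$
If $\ell>0$, the definition \eqref{C3N_9} of $S_{p,s}$ applied to $v_n$ gives $\|v_n\|_{X_p^s}^p\geq S_{p,s}\big(\int_\Omega(v_n^+)^{p_s^*}\big)^{p/p_s^*}$, hence in the limit $\ell\geq S_{p,s}(\ell/b)^{p/p_s^*}$, which forces $\ell\geq b^{(sp-N)/(sp)}S_{p,s}^{N/(sp)}$ (here one uses $p/p_s^*<1$ to solve the inequality for $\ell$). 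Consequently
$$
C \geq I_{\lambda,s}(u) + \frac{s}{N}\,b^{\frac{sp-N}{sp}}S_{p,s}^{\frac{N}{sp}} .
$$
It remains to show $I_{\lambda,s}(u)\geq0$: since $I_{\lambda,s}'(u)=0$, testing with $u$ and substituting, $I_{\lambda,s}(u)=\frac{s}{N}b\int_\Omega(u^+)^{p_s^*}\dd x + \lambda\big(\frac1q-\frac1p\big)\int_\Omega|u|^q\dd x\geq0$ because $q<p$ and $\lambda,b>0$. This contradicts the hypothesis $C<\frac{s}{N}b^{(sp-N)/(sp)}S_{p,s}^{N/(sp)}$. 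Therefore $\ell=0$, i.e.\ $\|v_n\|_{X_p^s}\to0$, which is precisely $u_n\to u$ strongly in $X_p^s$, establishing the $(PS)_C$ condition.

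The main obstacle I anticipate is the rigorous justification of the two Brezis--Lieb-type decompositions for the Gagliardo seminorm term $\|\cdot\|_{X_p^s}^p=\int_{\mathbb{R}^{2N}}\frac{|u(x)-u(y)|^p}{|x-y|^{N+sp}}\,\dd x\,\dd y$ and for the critical term, together with the passage from weak convergence to $I_{\lambda,s}'(u)=0$; the nonlocal, nonlinear double-integral structure means one cannot simply quote the classical local lemma, but the fractional analogue is available in the literature (e.g.\ in the works of Mosconi--Perera--Squassina--Yang and Brasco--Mosconi--Squassina cited in the paper), and the $(S_+)$-property of $A$ handles the gradient term. The rest of the argument is the bookkeeping indicated above. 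Since the excerpt says the proof is ``similar to that exposed in \cite{paiva1},'' I would present these decompositions as known and concentrate the writing on the inequality chain leading to the contradiction.
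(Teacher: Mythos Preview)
Your plan is correct and is precisely the standard Brezis--Nirenberg compactness scheme that the paper has in mind when it says the proof is ``similar to that exposed in \cite{paiva1}'' (the paper gives no further details). The only points worth tightening are the ones you already flag: the splitting $\int_\Omega (u_n^+)^{p_s^*}=\int_\Omega (u^+)^{p_s^*}+\int_\Omega (v_n^+)^{p_s^*}+o(1)$ needs the \emph{generalized} Brezis--Lieb lemma for $j(t)=(t^+)^{p_s^*}$ rather than the classical one, and the Gagliardo splitting follows by applying Brezis--Lieb to $U_n(x,y)=(u_n(x)-u_n(y))/|x-y|^{(N+sp)/p}$ in $L^p(\mathbb{R}^{2N})$; with these in hand your chain of inequalities and the estimate $I_{\lambda,s}(u)=\frac{s}{N}b\int_\Omega(u^+)^{p_s^*}+\lambda\big(\frac1q-\frac1p\big)\int_\Omega|u|^q\geq0$ go through verbatim.
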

We now consider the positive part of the functional $I_{\lambda,s}$. That is, $I_{\lambda,s}^+:{X_p^s}\to \mathbb{R}$ given by $$I_{\lambda,s}^+(u)=\frac{1}{p}\|u\|_{X_p^s}^p+\frac{\lambda}{q}\int_{\Omega} \vert u^+\vert^q\dd x-\frac{a}{p}\int_{\Omega}\vert u^+\vert^p\dd x-\frac{b}{p^*_s}\int_{\Omega} (u^{+})^{p^*_s}\dd x.$$
Of course, $I_{\lambda,s}^+\in  C^1({X_p^s},\mathbb{R})$ and it holds, for all $u,h\in {X_p^s},$
\begin{align*}
(I_{\lambda,s}^+)'(u)\cdot h= A(u)\cdot h+\lambda\int_{\Omega}\!\vert u^+\vert^{q-1}h\dd x
-a\int_{\Omega}\!\vert u^+\vert^{p-1}h\dd x-b\int_{\Omega} (u^+)^{p^*_s-1}h\dd x.
\end{align*}

Furthermore, critical points of $I_{\lambda,s}^+$ are weak solutions to the problem
\begin{equation*}
\left\{
\begin{array}{rcll}
(-\Delta)^{s}_{p} u&=&-\lambda\vert u^+\vert^{q-1}+ a\vert u^{+}\vert^{p-1}+b(u^+)^{p^*_s-1} &\textrm{in }\ \Omega,\\
u&=&0 &\textrm{in }\ \mathbb{R}^{N}\setminus \Omega,
\end{array}
\right.
\end{equation*}
where $a,b,\lambda >0$, $1<q<p$ and $u^+= \max\{ u,0\}$.

We now recall the following elementary inequality, which has a straightforward proof.
\begin{lemma}\label{C5P_33}
For all $u:\mathbb{R}^{N}\to\mathbb{R}$, $p>1$ and $x,y\in\mathbb{R}^{N}$ it holds
\begin{enumerate}
\item[$(i)$]$\vert u^+(x)-u^+(y)\vert^p\leq\vert u(x)-u(y)\vert^{p-2} \big(u(x)-u(y)\big)\big(u^+(x)-u^+(y)\big),$
\item[$(ii)$]$\vert u^-(x)-u^-(y)\vert^p\leq\vert u(x)-u(y)\vert^{p-2} \big(u(x)-u(y)\big)\big(u^{-}(x)-u^{-}(y)\big).$
\end{enumerate}
\end{lemma}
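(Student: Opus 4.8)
The plan is to prove $(i)$ by a purely pointwise argument for fixed $x,y\in\mathbb{R}^N$, and then to deduce $(ii)$ by applying $(i)$ to $-u$. Write $a=u(x)$ and $b=u(y)$, so that the asserted inequality in $(i)$ reads
\[
|a^+-b^+|^p\leq |a-b|^{p-2}(a-b)(a^+-b^+).
\]
First I would observe that both sides are invariant under the exchange $(a,b)\mapsto(b,a)$: the left-hand side is manifestly symmetric, while on the right-hand side the two factors $(a-b)$ and $(a^+-b^+)$ both change sign, so their product is preserved. Hence there is no loss of generality in assuming $a\geq b$, which forces $a^+\geq b^+$, and therefore $a-b\geq 0$ and $a^+-b^+\geq 0$.

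The key elementary fact I would isolate is the chain
\[
0\leq a^+-b^+\leq a-b\qquad\text{whenever } a\geq b,
\]
checked by splitting into the three cases $0\leq b\leq a$ (equality holds), $b<0\leq a$ (then $a^+-b^+=a\leq a-b$ since $-b>0$), and $b\leq a<0$ (then $a^+-b^+=0$). Since $p-1>0$, the map $t\mapsto t^{p-1}$ is nondecreasing on $[0,\infty)$, so $(a^+-b^+)^{p-1}\leq (a-b)^{p-1}$; multiplying this by $a^+-b^+\geq 0$ gives
\[
(a^+-b^+)^p\leq (a-b)^{p-1}(a^+-b^+).
\]
Because $a-b\geq 0$ and $a^+-b^+\geq 0$, the left-hand side equals $|a^+-b^+|^p$ and the right-hand side equals $|a-b|^{p-2}(a-b)(a^+-b^+)$, which is exactly $(i)$.

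For $(ii)$, I would simply note the pointwise identity $(-u)^+=-u^-$, apply $(i)$ with $u$ replaced by $-u$, and then rewrite each occurrence of $(-u)^+(x)$ as $-u^-(x)$ and of $-u(x)+u(y)$ as $-(u(x)-u(y))$. The two induced sign changes on the right-hand side cancel, while the left-hand side is unaffected since $|{-t}|=|t|$; this yields precisely the inequality stated in $(ii)$.

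There is essentially no serious obstacle in this lemma; the only points requiring a little care are the case analysis establishing $0\leq a^+-b^+\leq a-b$ (equivalently, that $t\mapsto t^+$ is monotone and $1$-Lipschitz) and the bookkeeping of signs when passing from $u$ to $-u$ in part $(ii)$.
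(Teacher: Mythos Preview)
Your argument is correct. The paper does not actually supply a proof of this lemma---it merely states that the inequality ``has a straightforward proof''---so your pointwise reduction to the scalar fact $0\le a^+-b^+\le a-b$ (for $a\ge b$), followed by raising to the $(p-1)$-th power and multiplying through, is precisely the elementary verification the authors had in mind; the deduction of $(ii)$ from $(i)$ via $(-u)^+=-u^-$ is likewise standard and correctly carried out with the paper's sign convention $u^-=\min\{u,0\}$.
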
\vspace*{.5cm}

If $u$ is a critical point of $I_{\lambda,s}^+$, then $(I_{\lambda,s}^+)'(u)\cdot h=0$ for all $h\in {X_p^s}$. Taking $h=u^-$, it follows from  
Lemma \ref{C5P_33} that
\begin{align*}
0=(I_{\lambda,s}^+)'(u)\cdot u^-= A(u)\cdot u^-\geq\displaystyle{\int_{\mathbb{R}^{2N}}\frac{\vert u^{-}(x)-u^{-}(y)\vert^{p}}{\vert x-y\vert^{N+sp}}}\dd x\dd y=\|u^-\|_{X_p^s}^p
\end{align*}
and therefore $u^-=0$. Thus, a critical point $u$ of $I_{\lambda,s}^+$ satisfies $u=u^+\geq 0$.

As in the proof of Lemma \ref{C3P_3}, we have that $I_{\lambda,s}^+$ satisfy the $(PS)$-condition at any level 
\begin{equation}\label{PS+}
C<\dfrac{s}{N}b^\frac{sp-N}{sp}S_{p,s}^\frac{N}{sp},
\end{equation}
for any $\lambda>0$. 

\begin{lemma}\label{CPL2_24.15}
If $a,b>0$ and $1<q<p$, then the trivial solution $u=0$ is a strict local minimizer of $I_{\lambda,s}$ 
for any $\lambda>0$.
\end{lemma}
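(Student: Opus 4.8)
The plan is to show that $u=0$ is a strict local minimizer of $I_{\lambda,s}$ first in the $C^0_\delta(\overline{\Omega})$ topology and then invoke Theorem~\ref{t1} (more precisely Remark~\ref{strict}) to upgrade this to a strict local minimum in the $X^s_p$ topology. To put the problem in the framework of Theorem~\ref{t1}, I would write $I_{\lambda,s}(u)=\tfrac1p\|u\|_{X^s_p}^p-\int_\Omega G(u)\,\dd x$ with $G(t)=\int_0^t g(\sigma)\,\dd\sigma$, where $g(t)=-\lambda|t|^{q-2}t+a|t|^{p-2}t+b(t^+)^{p^*_s-1}$; since $1<q<p<p^*_s$, this $g$ is continuous and satisfies the polynomial growth bound \eqref{CPL2_24.0}, so Theorem~\ref{t1} applies verbatim.

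Next I would carry out the elementary computation showing $0$ is a strict local minimum in $C^0_\delta(\overline{\Omega})$. Take $z\in X^s_p\cap C^0_\delta(\overline{\Omega})$ with $\|z\|_{0,\delta}=\|z/\delta^s\|_{L^\infty}$ small; then $|z(x)|\le \|z\|_{0,\delta}\,\delta(x)^s\le C\|z\|_{0,\delta}$ pointwise, so $\|z\|_{L^\infty(\Omega)}$ is small. Now estimate
\begin{equation*}
I_{\lambda,s}(z)=\frac1p\|z\|_{X^s_p}^p+\frac{\lambda}{q}\int_\Omega|z|^q\,\dd x-\frac{a}{p}\int_\Omega|z|^p\,\dd x-\frac{b}{p^*_s}\int_\Omega(z^+)^{p^*_s}\,\dd x.
\end{equation*}
The term $\tfrac1p\|z\|_{X^s_p}^p$ is nonnegative; the dangerous negative terms are controlled because, for $\|z\|_{L^\infty}$ small, $|z|^p\le \|z\|_{L^\infty}^{p-q}|z|^q$ and $(z^+)^{p^*_s}\le\|z\|_{L^\infty}^{p^*_s-q}|z|^q$, so that
\begin{equation*}
\frac{\lambda}{q}\int_\Omega|z|^q\,\dd x-\frac{a}{p}\int_\Omega|z|^p\,\dd x-\frac{b}{p^*_s}\int_\Omega(z^+)^{p^*_s}\,\dd x\ge\Big(\frac{\lambda}{q}-\frac{a}{p}\|z\|_{L^\infty}^{p-q}-\frac{b}{p^*_s}\|z\|_{L^\infty}^{p^*_s-q}\Big)\int_\Omega|z|^q\,\dd x.
\end{equation*}
Since $p-q>0$ and $p^*_s-q>0$, for $\|z\|_{0,\delta}$ small enough the bracket is $\ge\tfrac{\lambda}{2q}>0$, hence $I_{\lambda,s}(z)\ge\tfrac{\lambda}{2q}\int_\Omega|z|^q\,\dd x\ge 0=I_{\lambda,s}(0)$, with equality only if $z\equiv0$. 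This gives $0$ as a strict local minimizer in $C^0_\delta(\overline{\Omega})$.

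Finally I would apply Remark~\ref{strict}: since $g$ satisfies \eqref{CPL2_24.0} and $0$ is a strict local minimum of $\Phi=I_{\lambda,s}$ in $C^0_\delta(\overline{\Omega})$, it is also a strict local minimum in $X^s_p$, which is exactly the claim, and the constant $\lambda$ plays no special role beyond being positive. The only mild subtlety — and the main point to be careful about — is the passage from a smallness condition on $\|z\|_{0,\delta}$ to a smallness condition on $\|z\|_{L^\infty(\Omega)}$, which relies on $\delta$ being bounded on $\overline{\Omega}$ (true since $\Omega$ is bounded); everything else is a routine Young-type estimate exploiting $q<p\le p^*_s$. No compactness or $(PS)$-type argument is needed here, so this lemma is genuinely elementary once Theorem~\ref{t1} is in hand.
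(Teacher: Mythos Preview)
Your proposal is correct and follows essentially the same route as the paper: show $0$ is a strict $C^0_\delta$-local minimizer by factoring out $\int_\Omega|z|^q$ and using $q<p<p^*_s$ to make the bracket positive, then apply Remark~\ref{strict}. The only cosmetic differences are that you pass through $\|z\|_{L^\infty}$ where the paper estimates directly with $\|u\|_{0,\delta}$, and that the paper's proof text actually carries out the computation for $I^+_{\lambda,s}$ rather than $I_{\lambda,s}$ (a harmless slip, since the argument is identical for both functionals).
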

\begin{proof}
According to Remark \ref{strict}, it is enough to show that $u=0$ is a strict local minimum of 
$I_{\lambda,s}^+$ in $C^0_{\delta}(\overline{\Omega})$. Take $u\in C^0_\delta(\overline{\Omega})\backslash \{0 \}$
and consider the functional
\begin{align*}
I_{\lambda,s}^+(u)=\frac{1}{p}\|u\|_{X_p^s}^p + \frac{\lambda}{q}\int_{\Omega}\vert u^+\vert^q\dd x-
 \frac{a}{p}\int_{\Omega}\vert u^+\vert^p\dd x-\frac{b}{p^*_s}\int_{\Omega}\vert u^+\vert^{p^*_s}\dd x.
\end{align*}

So, for positive constants  $C_1$ and $C_2$, we have
\begin{align*}
I_{\lambda,s}^+(u)& \geq \frac{1}{p}\|u\|_{X_p^s}^p +
\left( \frac{\lambda}{q}-\frac{aC_1}{p}\| u\|_{0,\delta}^{p-q}-\frac{bC_2}{p^*_s}\| u\|_{0,\delta}^{p^*_s-q}\right)
\int_\Omega\vert u^+\vert^q \dd x
\end{align*}
This  implies for all $u \neq 0$ with $\| u\|_{0,\delta}$ sufficiently small,
$$\hspace*{4.6cm}I_{\lambda,s}^+(u)> 0=I_{\lambda,s}^+(0).\hspace*{4.6cm}\Box$$
\end{proof}

\begin{lemma}\label{CPL2_24.21}
If $\lambda_1<a$, $1<q<p$ and $b>0$, then, for any fixed $\Lambda>0$, there exists $t_0=t_0(\Lambda)>0$ such that
$$I_{\lambda,s}^+(t\varphi_1)<0,$$ for all $t\geq t_0$ and $\lambda<\Lambda$.
\end{lemma}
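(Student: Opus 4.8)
The plan is to show that $I_{\lambda,s}^+(t\varphi_1) \to -\infty$ as $t \to +\infty$, uniformly for $\lambda \in (0,\Lambda)$. Since $\varphi_1 > 0$ in $\Omega$, we have $(t\varphi_1)^+ = t\varphi_1$ for $t > 0$, so the positive-part truncations disappear and
$$I_{\lambda,s}^+(t\varphi_1) = \frac{t^p}{p}\|\varphi_1\|_{X_p^s}^p + \frac{\lambda t^q}{q}\int_\Omega \varphi_1^q\,\dd x - \frac{a t^p}{p}\int_\Omega \varphi_1^p\,\dd x - \frac{b t^{p_s^*}}{p_s^*}\int_\Omega \varphi_1^{p_s^*}\,\dd x.$$
Now I would use that $\varphi_1$ is the first eigenfunction, i.e. $A(\varphi_1)\cdot v = \lambda_1 \int_\Omega \varphi_1^{p-1} v\,\dd x$ for all $v \in X_p^s$; testing with $v = \varphi_1$ and recalling $\|\varphi_1\|_{L^p(\Omega)} = 1$ gives $\|\varphi_1\|_{X_p^s}^p = \lambda_1$ and $\int_\Omega \varphi_1^p\,\dd x = 1$. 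Hence the first and third terms combine to $\frac{t^p}{p}(\lambda_1 - a)$, which is strictly negative since $a > \lambda_1$.

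Writing $c_1 := \frac{a - \lambda_1}{p} > 0$, $c_2 := \frac{1}{q}\int_\Omega \varphi_1^q\,\dd x > 0$, and $c_3 := \frac{b}{p_s^*}\int_\Omega \varphi_1^{p_s^*}\,\dd x > 0$, we obtain for all $t > 0$ and all $\lambda \in (0,\Lambda)$
$$I_{\lambda,s}^+(t\varphi_1) = -c_1 t^p + \lambda c_2 t^q - c_3 t^{p_s^*} \leq -c_1 t^p + \Lambda c_2 t^q - c_3 t^{p_s^*}.$$
The right-hand side is a function of $t$ alone (no $\lambda$ dependence), and since $q < p < p_s^*$, it tends to $-\infty$ as $t \to +\infty$; in particular it is negative for $t$ large. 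Thus there exists $t_0 = t_0(\Lambda) > 0$ such that $-c_1 t^p + \Lambda c_2 t^q - c_3 t^{p_s^*} < 0$ for all $t \geq t_0$, which yields $I_{\lambda,s}^+(t\varphi_1) < 0$ for all $t \geq t_0$ and all $\lambda < \Lambda$.

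There is no real obstacle here: the only points requiring a word of care are the observation that the positive part is trivial because $\varphi_1 > 0$, and the normalization identities $\|\varphi_1\|_{X_p^s}^p = \lambda_1$, $\|\varphi_1\|_{L^p(\Omega)}^p = 1$ coming from the eigenvalue equation; everything else is the elementary fact that among the powers $t^q, t^p, t^{p_s^*}$ with $q < p < p_s^*$ the highest one dominates with a negative coefficient. The uniformity in $\lambda < \Lambda$ is immediate once one bounds the single positive term $\lambda c_2 t^q$ by $\Lambda c_2 t^q$.
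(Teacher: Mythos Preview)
Your proof is correct and follows exactly the approach the paper has in mind; the paper's own proof is a single sentence (``our hypotheses guarantee that we can choose $t_0=t_0(\Lambda)>0$ such that \ldots''), and you have simply spelled out the computation it leaves implicit. The key ingredients---$\varphi_1>0$, the normalization $\|\varphi_1\|_{L^p(\Omega)}=1$ with $\|\varphi_1\|_{X_p^s}^p=\lambda_1$, and the dominance of the negative highest-order term combined with the uniform bound $\lambda<\Lambda$---are precisely what the paper's one-line proof is relying on.
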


\begin{proof}
For a fixed $\Lambda>0$, our hypotheses guarantee that we can choose $t_0=t_0(\Lambda)>0$ such that, if $t\geq t_0$ and $\lambda<\Lambda$, it follows $I_{\lambda,s}^+(t\varphi_1)<0$. 
$\hfill\Box$\end{proof}

Now we prove that \eqref{principal} has at least one positive solution. 
\begin{proposition}\label{C3Q_7.0}
Suppose that $\lambda>0$, $1<q<p$, $\lambda_1<a$ and $b>0$. There exists $\lambda_0>0$ such that, if
\begin{equation*}
0<\lambda<\lambda_0,
\end{equation*}
then problem \eqref{principal} has at least one positive solution.
\end{proposition}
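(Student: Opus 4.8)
The plan is to produce the positive solution as a mountain--pass critical point of the truncated functional $I_{\lambda,s}^+$, using that its critical points are automatically nonnegative (testing $(I_{\lambda,s}^+)'(u)=0$ with $u^-$ and invoking Lemma \ref{C5P_33}$(ii)$ forces $u^-=0$) and that for $u\ge 0$ the nonlinearities of $I_{\lambda,s}^+$ and $I_{\lambda,s}$ coincide, so such a $u$ is a weak solution of \eqref{principal}. First I would record the mountain--pass geometry: by Lemma \ref{CPL2_24.15} together with Theorem \ref{t1} and Remark \ref{strict} (this is precisely why the $C_\delta^0$ versus $X_p^s$ equivalence was established, since the competing $|u|^p$ and $|u|^{p_s^*}$ terms obstruct a naive coercivity estimate on a small sphere of $X_p^s$), the origin is a \emph{strict} local minimizer of $I_{\lambda,s}^+$ in $X_p^s$; and by Lemma \ref{CPL2_24.21} there is $e:=t_0\varphi_1$ with $I_{\lambda,s}^+(e)<0=I_{\lambda,s}^+(0)$. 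I then set $c_\lambda=\inf_{\gamma\in\Gamma}\max_{t\in[0,1]}I_{\lambda,s}^+(\gamma(t))$, where $\Gamma=\{\gamma\in C([0,1],X_p^s):\gamma(0)=0,\ \gamma(1)=e\}$, so $c_\lambda\ge 0$.

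The decisive step is to place $c_\lambda$ strictly below the compactness threshold $\frac{s}{N}b^{\frac{sp-N}{sp}}S_{p,s}^{\frac{N}{sp}}$ of \eqref{PS+}. Testing with the radial path $t\mapsto te$ gives $c_\lambda\le\max_{t\ge 0}I_{\lambda,s}^+(t\varphi_1)$, and since $\varphi_1>0$, $\|\varphi_1\|_{L^p(\Omega)}=1$ and $\|\varphi_1\|_{X_p^s}^p=\lambda_1$,
\[
I_{\lambda,s}^+(t\varphi_1)=\frac{\lambda_1-a}{p}\,t^p+\frac{\lambda}{q}\,\|\varphi_1\|_{L^q(\Omega)}^q\,t^q-\frac{b}{p_s^*}\,\|\varphi_1\|_{L^{p_s^*}(\Omega)}^{p_s^*}\,t^{p_s^*}.
\]
Because $a>\lambda_1$, the only positive contribution carries the factor $\lambda$: writing this as $h(t)=-At^p+Bt^q-Dt^{p_s^*}$ with $A,D>0$ fixed and $B=O(\lambda)$, an elementary one--variable estimate gives $\max_{t\ge 0}h(t)\le\max_{t\ge 0}\big(Bt^q-Dt^{p_s^*}\big)=CB^{p_s^*/(p_s^*-q)}\le C'\lambda^{p_s^*/(p_s^*-q)}\to 0$ as $\lambda\to 0^+$. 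Hence there is $\lambda_0>0$ with $c_\lambda<\frac{s}{N}b^{\frac{sp-N}{sp}}S_{p,s}^{\frac{N}{sp}}$ whenever $0<\lambda<\lambda_0$. This is exactly the place where the absence of an explicit extremal for ${X_p^s}\hookrightarrow L^{p_s^*}(\Omega)$ would normally be the obstruction; here it is sidestepped, since mere smallness of $\lambda$ already pushes the level down along $\varphi_1$ and no dimensional restriction is needed for this first solution.

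With the geometry in place and $c_\lambda$ below the threshold, the Mountain Pass Theorem together with the $(PS)_{c_\lambda}$ property from \eqref{PS+} produces a critical point $u$ of $I_{\lambda,s}^+$ with $I_{\lambda,s}^+(u)=c_\lambda$. Since the origin is a \emph{strict} local minimum while $I_{\lambda,s}^+(e)<I_{\lambda,s}^+(0)$, this critical point can be taken with $u\ne 0$: either $c_\lambda>0$, or $c_\lambda=0$ and then the mountain--pass point is of saddle type and lies off the neighbourhood of $0$ on which $I_{\lambda,s}^+>0$ (this is why Remark \ref{strict} was needed, rather than a crude estimate on a small sphere). Then $u=u^+\ge 0$, $u\not\equiv 0$ is a nontrivial weak solution of \eqref{principal}; moreover $u\in L^\infty(\Omega)\cap C^\alpha(\overline{\Omega})$ by Propositions \ref{CPL2_23.1} and \ref{a.3}, and a strong maximum principle for the fractional $p$-Laplacian yields $u>0$ in $\Omega$. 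One takes $\lambda_0$ as produced above. The main difficulty is genuinely the energy estimate for $c_\lambda$, but as indicated it reduces to a routine one--variable computation once $\lambda$ is small; the remaining delicate point is the nontriviality of the mountain--pass critical value, handled through the strict local minimum furnished by Theorem \ref{t1}.
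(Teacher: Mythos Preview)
Your proposal is correct and follows the same approach as the paper: apply the Mountain Pass Theorem to $I_{\lambda,s}^+$ with the geometry coming from Lemmas \ref{CPL2_24.15} and \ref{CPL2_24.21}, and then push the minimax level below the compactness threshold \eqref{PS+} by evaluating along the ray $t\mapsto t\varphi_1$ and exploiting $a>\lambda_1$. The only cosmetic difference is that the paper, working on the bounded segment $t\in[0,t_0]$, simply drops both negative terms to obtain the linear bound $C_\lambda^+\le \frac{\lambda t_0^q}{q}\|\varphi_1\|_{L^q(\Omega)}^q$, whereas you keep the $-Dt^{p_s^*}$ term and maximize over $t\ge 0$ to get $O(\lambda^{p_s^*/(p_s^*-q)})$; both go to $0$ as $\lambda\to 0^+$, so either estimate suffices.
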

\begin{proof}We observe that a non-negative weak solution of \eqref{principal} is a critical point of the functional $I_{\lambda,s}^+$. We now apply the Mountain Pass Theorem. The geometric conditions of this theorem are consequences of Lemmas \ref{CPL2_24.15} and \ref{CPL2_24.21}. We now prove the existence of $\lambda_0>0$ such that, if $0<\lambda<\lambda_0$, then $I_{\lambda,s}^+$ satisfies the $(PS)$-condition at level
$$C_\lambda^+=\displaystyle\inf_{g\in\Gamma^+}\displaystyle\max_{u\in g([0,1])} {I_{\lambda,s}^+(u)},$$ 
where $\Gamma^+=\{g\in C([0,1],{X_p^s})\,:\,g(0)=0,\ g(1)=t_0\varphi_1\}$, with $t_0$ obtained in Lemma \ref{CPL2_24.21}. 

In order to do that, we observe that, for all $0\leq t\leq 1$, our hypotheses imply that 
$$\displaystyle\inf_{g\in\Gamma^+}\displaystyle\max_{u\in g([0,1])}{I_{\lambda,s}^+(u)}\leq\displaystyle\max_{u\in g_0([0,1])} {I_{\lambda,s}^+(u)}=
\displaystyle\max_{t\in [0,1]} {I_{\lambda,s}^+(g_0(t))}\leq \frac{\lambda t_0^q}{q}\| \varphi_1\|_{L^q(\Omega)}^q, $$
from what follows the existence of $\lambda_0>0$ such that 
$$0 \leq C_\lambda^+<\dfrac{s}{N}b^\frac{sp-N}{sp}S_{p,s}^\frac{N}{sp},\ \textrm{ for all }\ 0<\lambda<\lambda_0<\Lambda,$$
with $\Lambda$ as in Lemma \ref{CPL2_24.21}. The proof is complete as a consequence of \eqref{PS+}.
$\hfill\Box$\end{proof}\vspace*{.3cm}

In order to show the existence of a negative solution for $I_{\lambda,s}$, we consider 
$I_{\lambda,s}^-:{X_p^s}\to \mathbb{R}$ given by $$I_{\lambda,s}^-(u)=\frac{1}{p}\|u\|_{X_p^s}^p+\frac{\lambda}{q}\int_{\Omega} \vert u^-\vert^q\dd x-\frac{a}{p}\int_{\Omega}\vert u^-\vert^p\dd x.$$
where $u^-= \min\{ u,0\}$.

Of course $I_{\lambda,s}^-\in C^1({X_p^s},\mathbb{R})$ and critical points of  $I_{\lambda,s}^-$ are weak solutions to the problem 
\begin{equation}\label{CPL2_24.23}
\left\{\begin{array}{rcll}
(-\Delta)^{s}_{p} u&=&-\lambda\vert u^-\vert^{q-1}+ a\vert u^{-}\vert^{p-1}&\textrm{in }\ \Omega,\\
u&=&0 &\textrm{in }\ \mathbb{R}^{N}\setminus \Omega,\end{array}\right.
\end{equation}
As before, a critical point $u$ of $I_{\lambda,s}^-$ satisfies $u=u^-\leq 0$. 

Observe that $I_{\lambda,s}^{-}$ satisfies the $(PS)$-condition at all levels, for any $\lambda>0$, since the nonlinearity in 
\eqref{CPL2_24.23} does not have fractional critical power. 

\begin{lemma}
If $0<a$ and $1<q<p$, then $u=0$ is a strict local minimizer of $I_{\lambda,s}^-$ for all $\lambda>0$.
\end{lemma}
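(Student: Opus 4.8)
The plan is to mirror the argument used for the positive part of the functional in Lemma~\ref{CPL2_24.15}, exploiting the fact that $1<q<p$ so that the lowest-order term $\frac{\lambda}{q}\int_\Omega |u^-|^q\,\dd x$ dominates near the origin. By Remark~\ref{strict}, it suffices to show that $0$ is a strict local minimizer of $I_{\lambda,s}^-$ in $C_\delta^0(\overline{\Omega})$; note that Theorem~\ref{t1} applies here because the nonlinearity $g(t)=-\lambda|t^-|^{q-1}+a|t^-|^{p-1}$ satisfies the growth condition \eqref{CPL2_24.0} with the relevant exponent being $\max\{q,p\}<p^*_s$, hence subcritical.

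First I would take $u\in C_\delta^0(\overline{\Omega})\setminus\{0\}$ and write out
$$I_{\lambda,s}^-(u)=\frac{1}{p}\|u\|_{X_p^s}^p+\frac{\lambda}{q}\int_{\Omega}|u^-|^q\,\dd x-\frac{a}{p}\int_{\Omega}|u^-|^p\,\dd x.$$
Next I would estimate the negative term by factoring out $|u^-|^q$: since $|u^-(x)|^p=|u^-(x)|^q\,|u^-(x)|^{p-q}\leq |u^-(x)|^q\,\|u^-\|_{L^\infty(\Omega)}^{p-q}$, and using the continuous embedding $C_\delta^0(\overline{\Omega})\hookrightarrow L^\infty(\Omega)$, i.e. $\|u^-\|_{L^\infty(\Omega)}\leq \|u\|_{L^\infty(\Omega)}\leq C\delta_\Omega^s\|u\|_{0,\delta}$ for some constant (where $\delta_\Omega=\sup_{\overline\Omega}\delta$), one obtains a constant $C_1>0$ with
$$I_{\lambda,s}^-(u)\geq \frac{1}{p}\|u\|_{X_p^s}^p+\left(\frac{\lambda}{q}-\frac{aC_1}{p}\|u\|_{0,\delta}^{p-q}\right)\int_\Omega |u^-|^q\,\dd x.$$

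Then I would observe that, since $p-q>0$, there exists $\rho>0$ such that $\frac{\lambda}{q}-\frac{aC_1}{p}\|u\|_{0,\delta}^{p-q}>0$ whenever $0<\|u\|_{0,\delta}<\rho$. For such $u\neq 0$ the right-hand side is strictly positive (the term $\frac1p\|u\|_{X_p^s}^p\geq 0$ and the second summand is $>0$ unless $u^-\equiv 0$, in which case $u=u^+\not\equiv0$ and $\frac1p\|u\|_{X_p^s}^p>0$), so $I_{\lambda,s}^-(u)>0=I_{\lambda,s}^-(0)$. Hence $0$ is a strict local minimizer in $C_\delta^0(\overline\Omega)$, and Remark~\ref{strict} transfers this to $X_p^s$.

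This proof is essentially routine and there is no serious obstacle; the only point requiring a little care is the embedding $C_\delta^0(\overline\Omega)\hookrightarrow L^\infty(\Omega)$ used to pass from the $\|\cdot\|_{0,\delta}$ norm to the $L^\infty$ bound controlling $\|u^-\|_{L^\infty}^{p-q}$, which is immediate from the boundedness of $\delta$ on $\overline\Omega$. One could alternatively absorb the negative term directly against $\|u\|_{X_p^s}^p$ using the embedding $X_p^s\hookrightarrow L^p(\Omega)$, but factoring out $|u^-|^q$ as above is the cleaner route and parallels Lemma~\ref{CPL2_24.15} exactly.
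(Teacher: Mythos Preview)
Your proposal is correct and follows essentially the same approach as the paper's own proof: both reduce to a strict local minimum in $C^0_\delta(\overline\Omega)$ via Remark~\ref{strict}, factor out $|u^-|^q$ to obtain the lower bound $\frac{1}{p}\|u\|_{X_p^s}^p+\big(\frac{\lambda}{q}-\frac{aC_1}{p}\|u\|_{0,\delta}^{p-q}\big)\int_\Omega |u^-|^q\,\dd x$, and conclude by taking $\|u\|_{0,\delta}$ small. Your additional remark handling the case $u^-\equiv 0$ (where strict positivity comes from $\|u\|_{X_p^s}^p>0$) is a detail the paper leaves implicit.
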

\begin{proof}
According to Remark \ref{strict}, it is enough to show that $u=0$ is a strict local minimum of
$I_{\lambda,s}^-$ in $C^0_{\delta}(\overline{\Omega})$. It is not difficult to verify that, 
for all $u\in C^0_\delta(\overline{\Omega})\setminus\{0\}$ and a fixed $\lambda>0$, we have
\begin{align*}
I_{\lambda,s}^-(u)&\geq \frac{1}{p}\|u\|_{X_p^s}^p +
\left( \frac{\lambda}{q}-\frac{a}{p}\| u\|_{0,\delta}^{p-q}\right) \int_\Omega\vert u^-\vert^q \dd x,
\end{align*}
Taking $R=\Big(\dfrac{\lambda p}{qa}\Big)^{\frac{1}{p-q}}$, it follows 
$a\| u\|_{0,\delta}^{p-q}/p<\lambda/q$ and, for all $u \neq 0$,
$$I_{\lambda,s}^-(u)> 0=I_{\lambda,s}^-(0),\ \  \mbox{if}\ \  \| u\|_{0,\delta}<R. $$
\vspace*{-.5cm}$\hfill\Box$\end{proof}\hspace*{.5cm}

The proof of the next result is analogous to that of Lemma \ref{CPL2_24.21}.
\begin{lemma}
If $\lambda_1<a$ and $1<q<p$, then for any fixed $\Lambda>0$, there exists $t'_0=t'_0(\Lambda)>0$ such that
$$I_{\lambda,s}^-(-t\varphi_1)<0,$$ for all $t\geq t'_0$ and $\lambda<\Lambda$.	
\end{lemma}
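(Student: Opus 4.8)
The plan is to test $I_{\lambda,s}^-$ along the negative ray $\{-t\varphi_1 : t>0\}$ and produce the threshold $t'_0$ in closed form, exactly as in the proof of Lemma \ref{CPL2_24.21}. First I would note that since $\varphi_1>0$ in $\Omega$, for every $t>0$ we have $(-t\varphi_1)^-=-t\varphi_1$, so $|(-t\varphi_1)^-|=t\varphi_1$; testing the first eigenvalue equation with $\varphi_1$ and using that $\varphi_1$ is $L^p$-normalized gives $\|\varphi_1\|_{X_p^s}^p=\lambda_1$. Substituting these into the definition of $I_{\lambda,s}^-$ yields
$$I_{\lambda,s}^-(-t\varphi_1)=\frac{t^p}{p}(\lambda_1-a)+\frac{\lambda t^q}{q}\|\varphi_1\|_{L^q(\Omega)}^q .$$

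Next, to obtain a bound that is uniform in $\lambda<\Lambda$, I would simply estimate $\lambda<\Lambda$ in the last term, which gives
$$I_{\lambda,s}^-(-t\varphi_1)\leq t^q\left[\frac{\lambda_1-a}{p}\,t^{p-q}+\frac{\Lambda}{q}\|\varphi_1\|_{L^q(\Omega)}^q\right].$$
Because $1<q<p$ and $a>\lambda_1$, the bracketed quantity is strictly decreasing in $t^{p-q}$ and tends to $-\infty$; it is negative as soon as
$$t>t'_0:=\left(\frac{p\,\Lambda\,\|\varphi_1\|_{L^q(\Omega)}^q}{q\,(a-\lambda_1)}\right)^{\frac{1}{p-q}},$$
and this $t'_0$ depends only on $\Lambda$ together with the fixed data $p,q,a,\lambda_1,\varphi_1$, not on $\lambda$. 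Hence $I_{\lambda,s}^-(-t\varphi_1)<0$ for all $t\geq t'_0$ and all $\lambda<\Lambda$, which is the assertion.

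I do not expect any real obstacle here. The only point that needs attention is the order of quantifiers: $t'_0$ must be fixed after $\Lambda$ yet remain valid for every $\lambda<\Lambda$, and this is precisely what the crude bound $\lambda<\Lambda$ in the coefficient of $t^q$ arranges, once the sign condition $a>\lambda_1$ guarantees that the leading $t^p$ term is negative. Note also that, in contrast with $I_{\lambda,s}^+$, the functional $I_{\lambda,s}^-$ carries no term of critical growth, so no Sobolev-type estimate enters and the computation is genuinely elementary.
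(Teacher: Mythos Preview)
Your proof is correct and follows exactly the approach the paper intends: the paper does not write out a separate argument but simply declares the proof ``analogous to that of Lemma~\ref{CPL2_24.21}'', and your computation is precisely that analogue, with the explicit choice of $t'_0$ spelled out. The only cosmetic point is that at $t=t'_0$ your displayed bracket vanishes rather than being negative, but the strict inequality $\lambda<\Lambda$ in the preceding step already makes $I_{\lambda,s}^-(-t'_0\varphi_1)<0$, so the conclusion for $t\ge t'_0$ stands.
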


The proof of the next result is similar to that of Proposition \ref{C3Q_7.0}. In the proof, the inequality
\begin{equation*}
0\leq C_\lambda^-\leq\frac{\lambda (t'_0)^q}{q}\| \varphi_1\|_{L^q(\Omega)}^q,\quad\textrm{for all }\, \lambda>0,
\end{equation*}
play an essential role, with $C_\lambda^-$ defined analogously to $C_\lambda^+$. 
\begin{proposition}
Suppose that $\lambda>0$, $1<q<p$, $\lambda_1<a$ and $b>0$. There exists $\lambda_0>0$ such that, if
\begin{equation*}
0<\lambda<\lambda_0,
\end{equation*}
then problem \eqref{principal} has at least one negative solution.
\end{proposition}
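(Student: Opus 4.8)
The plan is to reproduce, for the functional $I_{\lambda,s}^-$, the Mountain Pass argument carried out for $I_{\lambda,s}^+$ in Proposition~\ref{C3Q_7.0}, taking advantage of the fact that the negative‐part problem \eqref{CPL2_24.23} contains no term of critical growth. First recall that, by the computation preceding \eqref{CPL2_24.23} together with Lemma~\ref{C5P_33}$(ii)$, every critical point $u$ of $I_{\lambda,s}^-$ satisfies $u=u^-\le 0$ and is therefore a non-positive weak solution of \eqref{principal}; hence it suffices to produce a \emph{nontrivial} critical point of $I_{\lambda,s}^-$.

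Next I would verify the geometric hypotheses of the Mountain Pass Theorem, which are supplied by the two preceding lemmas: $u=0$ is a strict local minimizer of $I_{\lambda,s}^-$ in $X_p^s$ (via Theorem~\ref{t1} and Remark~\ref{strict}), and, fixing any $\Lambda>0$, there is $t'_0=t'_0(\Lambda)>0$ with $I_{\lambda,s}^-(-t'_0\varphi_1)<0$ for every $\lambda<\Lambda$, where $\|-t'_0\varphi_1\|_{X_p^s}=t'_0\lambda_1^{1/p}$ may be taken larger than the radius of the local‐minimum ball. With
$$\Gamma^-=\bigl\{g\in C([0,1],X_p^s)\ :\ g(0)=0,\ g(1)=-t'_0\varphi_1\bigr\},\qquad C_\lambda^-=\inf_{g\in\Gamma^-}\ \max_{u\in g([0,1])} I_{\lambda,s}^-(u),$$
we are in a mountain–pass situation. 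The decisive simplification compared with the positive case is that, as already observed in this section, $I_{\lambda,s}^-$ satisfies the $(PS)$–condition at \emph{every} level and for \emph{every} $\lambda>0$; in particular there is no threshold $\frac{s}{N}b^{(sp-N)/sp}S_{p,s}^{N/sp}$ to beat and no smallness of $\lambda$ is needed for compactness.

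It then only remains to guarantee that the critical point produced at the level $C_\lambda^-$ is not $0$. Here the elementary two–sided estimate $0\le C_\lambda^-\le \frac{\lambda (t'_0)^q}{q}\,\|\varphi_1\|_{L^q(\Omega)}^q$ plays an essential role: evaluating $I_{\lambda,s}^-$ along the segment $g_0(t)=-t\,t'_0\varphi_1$, $t\in[0,1]$, and using $\|\varphi_1\|_{X_p^s}^p=\lambda_1$, $\|\varphi_1\|_{L^p(\Omega)}=1$ and $a>\lambda_1$, the $p$–homogeneous contribution $\tfrac{(t t'_0)^p}{p}(\lambda_1-a)$ is non-positive, leaving the upper bound, while the lower bound $C_\lambda^-\ge 0$ follows because every path in $\Gamma^-$ must cross a small sphere about $0$ on which $I_{\lambda,s}^-\ge 0=I_{\lambda,s}^-(0)$ (strict local minimum). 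If $C_\lambda^->0$, the Mountain Pass Theorem directly yields a critical point $u$ with $I_{\lambda,s}^-(u)=C_\lambda^->0=I_{\lambda,s}^-(0)$, so $u\ne 0$; in the borderline case $C_\lambda^-=0$ one uses the refinement of the mountain pass theorem valid around a strict local minimum (of Ghoussoub–Preiss type), producing a $(PS)$ sequence at level $0$ bounded away from $0$ and hence, by the $(PS)$–condition, a nonzero critical point at level $0$. Either way we obtain a nontrivial $u\le 0$ solving \eqref{principal}; one may take $\lambda_0=\Lambda$ (indeed any positive number works for the negative solution), or the $\lambda_0$ of Proposition~\ref{C3Q_7.0} if one wants both existence results to hold simultaneously.

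The only point I expect to require genuine care is this last nontriviality step — ruling out that the mountain–pass value coincides with $I_{\lambda,s}^-(0)=0$ with $0$ as the sole critical point at that level — which is exactly why the strict local minimum property (Lemma preceding, resting on Theorem~\ref{t1}) and the explicit bound on $C_\lambda^-$ are invoked; everything else (the $(PS)$–condition, boundedness of $(PS)$ sequences, the $C_\delta^0$ versus $X_p^s$ equivalence) has already been established in the earlier sections, so this proof is in fact strictly easier than that of Proposition~\ref{C3Q_7.0}.
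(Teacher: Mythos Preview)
Your proposal is correct and follows essentially the same Mountain Pass route the paper indicates (it only says the proof ``is similar to that of Proposition~\ref{C3Q_7.0}'' and records the estimate $0\le C_\lambda^-\le \tfrac{\lambda (t'_0)^q}{q}\|\varphi_1\|_{L^q}^q$). Your sketch is in fact more thorough than the paper's: you correctly note that the $(PS)$--condition for $I_{\lambda,s}^-$ holds at every level (so no smallness of $\lambda$ is needed for compactness, and the stated $\lambda_0$ is only there for uniformity with the positive case), and you address the nontriviality issue at the borderline level $C_\lambda^-=0$ via the strict local minimum/Ghoussoub--Preiss refinement, a point the paper leaves implicit.
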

\section{A third solution via Linking Theorem}
In contrast to the previous section, the proof of the existence of a third solution to \eqref{principal} is much more intricate and also technical.  We obtain a third solution by applying the Linking Theorem and a series of previous obtained results that will be useful in our proof.  

We suppose $0<s<1$, $N>sp$, $\lambda>0$, $\lambda_1<a<\lambda^*$ and $b>0.$

The proof of our first result is simple.
\begin{proposition}\label{decomposition}If $\textrm{span}\{\varphi_1\}$ denotes the space generated by the first (positive, $L^p$-normalized) eigenfunction of $(-\Delta)_p^{s}$, then
$$X_p^s=W\oplus\textup{span}\{\varphi_1\}.$$
\end{proposition}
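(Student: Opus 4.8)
The plan is to show that the two subspaces $W$ and $\textup{span}\{\varphi_1\}$ intersect trivially and together span all of $X_p^s$. Recall $W=\{u\in X_p^s : A(\varphi_1)\cdot u = 0\}$, so $W$ is the kernel of the bounded linear functional $u\mapsto A(\varphi_1)\cdot u$ on $X_p^s$; in particular $W$ is a closed subspace of codimension at most one. The only thing to rule out is that this functional is identically zero, i.e. that $W$ is the whole space.

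First I would test this functional on $\varphi_1$ itself. By definition $A(\varphi_1)\cdot\varphi_1 = \|\varphi_1\|_{X_p^s}^p = [\varphi_1]_{s,p}^p$, and since $\varphi_1$ is the (nontrivial) first eigenfunction, $\|\varphi_1\|_{X_p^s}^p = \lambda_1 \|\varphi_1\|_{L^p(\Omega)}^p = \lambda_1 > 0$ because $\varphi_1$ is $L^p$-normalized and $\lambda_1>0$. Hence $\varphi_1\notin W$, so $W\cap\textup{span}\{\varphi_1\}=\{0\}$ and the functional $u\mapsto A(\varphi_1)\cdot u$ is not the zero functional; therefore $W$ has codimension exactly one.

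Next I would produce the direct sum decomposition explicitly. For any $u\in X_p^s$, set
\begin{equation*}
t = \frac{A(\varphi_1)\cdot u}{A(\varphi_1)\cdot\varphi_1} = \frac{A(\varphi_1)\cdot u}{\lambda_1}, \qquad w = u - t\varphi_1 .
\end{equation*}
Then $A(\varphi_1)\cdot w = A(\varphi_1)\cdot u - t\,A(\varphi_1)\cdot\varphi_1 = 0$ by linearity of $v\mapsto A(\varphi_1)\cdot v$, so $w\in W$ and $u = w + t\varphi_1\in W + \textup{span}\{\varphi_1\}$. Combined with the trivial intersection established above, this gives $X_p^s = W\oplus\textup{span}\{\varphi_1\}$.

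There is no real obstacle here; the one point that deserves a word of care is that $v\mapsto A(\varphi_1)\cdot v$ is genuinely linear in $v$ (the $p-2$ power sits on the fixed argument $\varphi_1$, not on $v$), which is what makes $W$ a linear subspace and the projection formula work — this is exactly why the statement is phrased with $A(\varphi_1)\cdot u$ rather than a symmetric pairing. I would also note that closedness of $W$, while not strictly needed for the algebraic direct sum, follows from continuity of the functional and guarantees that the decomposition is also topological, which is what is used later in the Linking Theorem argument.
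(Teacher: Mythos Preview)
Your proof is correct and is precisely the standard argument the paper has in mind when it says ``the proof of our first result is simple'' without giving details: $W$ is the kernel of the nonzero bounded linear functional $v\mapsto A(\varphi_1)\cdot v$, and since $A(\varphi_1)\cdot\varphi_1=\|\varphi_1\|_{X_p^s}^p=\lambda_1>0$ one gets the explicit splitting $u=(u-t\varphi_1)+t\varphi_1$ with $t=A(\varphi_1)\cdot u/\lambda_1$. Your remarks on the linearity in the second argument of $A(\cdot)\cdot v$ and on the closedness of $W$ are exactly the points worth noting.
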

We recall that  
\begin{equation*}
W=\left\{ u\in X_p^s\; :\; A(\varphi_1)\cdot u=0\right\}\ \ \text{and}\ \ \lambda^{*}=\inf\left\{\|u\|_{X_p^s}^{p}\; : \; u\in W, \ \|u\|_{L^{p}(\Omega)}^{p}=1\right\}.
\end{equation*}

The next result will be used to prove that the geometric conditions of the Linking Theorem are satisfied.
\begin{proposition}\label{C2P_25}
Suppose that $a<\lambda^{*}$. Then, there exist $\alpha>0$ and $\rho>0$ such that $I_{\lambda,s}(u)\geq\alpha$ for any 
$u\in W$ with $\|u\|_{X_p^s}=\rho$.
\end{proposition}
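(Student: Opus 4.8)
The plan is to bound $I_{\lambda,s}$ from below on the sphere $\{u \in W : \|u\|_{X_p^s} = \rho\}$ by controlling each of the four terms in the functional separately, exploiting that on $W$ the critical eigenvalue $\lambda^*$ governs the ratio $\|u\|_{X_p^s}^p / \|u\|_{L^p(\Omega)}^p$. First I would record, for $u \in W$, the Poincaré-type inequality coming directly from the definition of $\lambda^*$, namely $\|u\|_{L^p(\Omega)}^p \le (\lambda^*)^{-1}\|u\|_{X_p^s}^p$; hence the term $-\frac{a}{p}\int_\Omega |u|^p\,\dd x \ge -\frac{a}{p\lambda^*}\|u\|_{X_p^s}^p$, and combined with $\frac{1}{p}\|u\|_{X_p^s}^p$ this gives a good term $\frac{1}{p}(1 - a/\lambda^*)\|u\|_{X_p^s}^p$ with positive coefficient since $a < \lambda^*$.

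Next I would discard the term $\frac{\lambda}{q}\int_\Omega |u|^q\,\dd x \ge 0$ (it has the right sign) and estimate the critical term from below using the Sobolev embedding $X_p^s \hookrightarrow L^{p_s^*}(\Omega)$: with $S_{p,s}$ from \eqref{C3N_9} one has $\int_\Omega (u^+)^{p_s^*}\dd x \le \int_\Omega |u|^{p_s^*}\dd x \le S_{p,s}^{-p_s^*/p}\|u\|_{X_p^s}^{p_s^*}$, so $-\frac{b}{p_s^*}\int_\Omega(u^+)^{p_s^*}\dd x \ge -\frac{b}{p_s^*}S_{p,s}^{-p_s^*/p}\|u\|_{X_p^s}^{p_s^*}$. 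Putting the pieces together, for $u \in W$ with $\|u\|_{X_p^s} = \rho$,
\begin{equation*}
I_{\lambda,s}(u) \ge \frac{1}{p}\Bigl(1 - \frac{a}{\lambda^*}\Bigr)\rho^p - \frac{b}{p_s^*}S_{p,s}^{-p_s^*/p}\rho^{p_s^*} =: h(\rho).
\end{equation*}
Since $p_s^* > p$ and the leading coefficient $\frac{1}{p}(1 - a/\lambda^*)$ is strictly positive, $h(\rho) > 0$ for all sufficiently small $\rho > 0$; fixing such a $\rho$ and setting $\alpha := h(\rho) > 0$ concludes the argument.

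The only genuinely delicate point is making sure the coefficient of $\|u\|_{X_p^s}^p$ really is positive, which is exactly where the hypothesis $a < \lambda^*$ is used, so I would state the Poincaré inequality on $W$ carefully — it is immediate from the variational characterization of $\lambda^*$ together with homogeneity, applied to $u/\|u\|_{L^p(\Omega)}$ when $u \ne 0$. Everything else is a routine one-variable calculus estimate on $h(\rho)$; there is no compactness or regularity issue here, so I do not anticipate a real obstacle. One should only be slightly careful that the term $\frac{\lambda}{q}\int_\Omega |u|^q$ is simply dropped rather than used, so the resulting $\alpha$ and $\rho$ do not depend on $\lambda$, which is convenient for the later Linking argument.
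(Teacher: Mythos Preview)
Your proof is correct and follows essentially the same route as the paper: both use the variational characterization of $\lambda^*$ to obtain $\|u\|_{L^p(\Omega)}^p\le(\lambda^*)^{-1}\|u\|_{X_p^s}^p$ on $W$, drop the nonnegative $\lambda/q$ term, control the critical term via the Sobolev embedding, and conclude by a one-variable analysis of the resulting function of $\rho$. Your observation that $\alpha$ and $\rho$ are independent of $\lambda$ is a useful addition, since this is exactly what is needed later in the Linking argument (cf.\ \eqref{C3N_74}).
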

\begin{proof} Since $a>0$, the immersion ${X_p^s}\hookrightarrow L^r(\Omega)$ for $r\in[p,p_s^*]$ combined with the definition of $\lambda^*$ implies (after some calculations) that 
\begin{align*}
I_{\lambda,s}(u)
&\geq\frac{1}{p}\left(1-\frac{a}{\lambda^{*}}\right)\|u\|_{X_p^s}^p-\frac{bC^{p^*_s}}{p^*_s}\| u\|_{X_p^s}^{p^*_s}\geq\|u\|_{X_p^s}^p(A-B\|u\|_{X_p^s}^{p^*_s-p}),
\end{align*}
where $A=\left(1-a/\lambda^{*}\right)/p>0$ and $B=(bC^{p^*_s})/p^*_s>0$. If  $0<\rho<\left(A/B\right)^{1/(p^*_s-p)}$, then 
$$I_{\lambda,s}(u)\geq\rho^p(A-B\rho^{p^*_s-p})=\alpha>0$$
and we conclude that $ I_{\lambda,s}(u)\geq \alpha$ for all $u\in W$ satisfying $\|u\|_{X_p^s}=\rho$.
$\hfill\Box$\end{proof}\vspace*{.5cm}

In order to apply the Linking Theorem with respect to the decomposition given in Proposition \ref{decomposition}, we need to prove the existence of a vector $e\in W$ satisfying the hypotheses of that result. We recall that $S_{p,s}$ was defined in \eqref{C3N_9}.

We now state the following result, which can be found in \cite[Proposition 2.1]{mosconi}. See also \cite{Brasco}.
\begin{proposition}\label{pmosconi}
Let $1 < p <\infty$, $s\in(0, 1)$, $N>sp$. Then
\begin{enumerate}
\item[$(i)$] There exists a minimizer for $S_{p,s}$. 
\item[$(ii)$] For every minimizer $U$, there exist $x_0\in \mathbb{R}^N$ and a constant sign monotone function 
$u :[0,\infty)\to \mathbb{R}$ such that $U (x) = u(|x-x_0 |)$.
\item[$(iii)$] For every nonnegative minimizer $U\in X^s_p$ and $v\in {X_p^s}$, we have
$$\int_{\mathbb{R}^{2N}}\frac{|U(x)-U(y)|^{p-2}\left(U(x)-U(y)\right)(v(x)-v(y))}{\vert x-y\vert^{N+sp}}\dd x\dd y=S_{p,s}\int_{\mathbb{R}^{2N}}U^{p^*_s-1}v\dd x.$$
\end{enumerate}
\end{proposition}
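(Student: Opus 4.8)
The Rayleigh quotient defining $S_{p,s}$ is invariant under the translations $u\mapsto u(\cdot-x_0)$ and the dilations $u\mapsto u(\cdot/\tau)$, and $C_c^\infty$ functions are dense both in $X_p^s$ and in the homogeneous space $\mathcal D^{s,p}(\mathbb R^N)$; consequently $S_{p,s}$ equals the best constant of the whole-space embedding $\mathcal D^{s,p}(\mathbb R^N)\hookrightarrow L^{p_s^*}(\mathbb R^N)$, and it suffices to produce a minimizer on $\mathbb R^N$. I would take a minimizing sequence $(u_n)$ normalized by $\|u_n\|_{L^{p_s^*}(\mathbb R^N)}=1$, so $[u_n]_{s,p}^p\to S_{p,s}$, and rescale each $u_n$ by a translation and a dilation (Lions' device via the L\'evy concentration function) so as to arrange $\sup_{y\in\mathbb R^N}\int_{B_1(y)}|u_n|^{p_s^*}\dd x=\int_{B_1(0)}|u_n|^{p_s^*}\dd x=\tfrac12$; this changes neither the normalization nor the quotient. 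The sequence is then bounded in $\mathcal D^{s,p}(\mathbb R^N)$, and I pass to a subsequence with $u_n\rightharpoonup U$.

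The heart of $(i)$ is to show $U\not\equiv0$ and $u_n\to U$ in $L^{p_s^*}(\mathbb R^N)$. Invoking the concentration-compactness principle for the Gagliardo seminorm, along a further subsequence $|u_n|^{p_s^*}\dd x\rightharpoonup\nu$ with $\nu(\mathbb R^N)=1$, the associated seminorm measures converge weakly-$*$ to a nonnegative $\mu$ with $\mu(\mathbb R^N)=S_{p,s}$, and $\nu=|U|^{p_s^*}\dd x+\sum_j\nu_j\delta_{x_j}$ with $\mu(\{x_j\})\ge S_{p,s}\nu_j^{p/p_s^*}$; integrating the lower bound on $\mu$ (whose absolutely continuous part dominates the seminorm density of $U$) gives
\begin{equation*}
S_{p,s}=\mu(\mathbb R^N)\ \ge\ [U]_{s,p}^p+\sum_j\mu(\{x_j\})\ \ge\ S_{p,s}\Big(\int_{\mathbb R^N}|U|^{p_s^*}\dd x\Big)^{p/p_s^*}+S_{p,s}\sum_j\nu_j^{p/p_s^*}.
\end{equation*}
Since $\int_{\mathbb R^N}|U|^{p_s^*}\dd x+\sum_j\nu_j=1$ and $t^{p/p_s^*}\ge t$ on $[0,1]$ with equality only at $t\in\{0,1\}$ (here $p<p_s^*$ enters), the weights $\int_{\mathbb R^N}|U|^{p_s^*}\dd x,\nu_1,\nu_2,\dots$ are all $0$ or $1$; hence either $\int_{\mathbb R^N}|U|^{p_s^*}\dd x=1$ with no atoms, or $U\equiv0$ and $\nu$ is a single Dirac mass. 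The choice $\sup_y\int_{B_1(y)}|u_n|^{p_s^*}\dd x=\tfrac12$ rules out the latter (and vanishing), so $\int_{\mathbb R^N}|U|^{p_s^*}\dd x=1$; Brezis--Lieb then gives $u_n\to U$ in $L^{p_s^*}$, and weak lower semicontinuity yields $[U]_{s,p}^p\le\liminf_n[u_n]_{s,p}^p=S_{p,s}$, while $[U]_{s,p}^p\ge S_{p,s}$ by definition. Thus $U$ attains $S_{p,s}$, proving $(i)$.

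For $(ii)$ I would argue by symmetrization. A minimizer cannot change sign, since $\big|\,|U|(x)-|U|(y)\,\big|\le|U(x)-U(y)|$ forces $[\,|U|\,]_{s,p}<[U]_{s,p}$ otherwise while $\|\,|U|\,\|_{L^{p_s^*}}=\|U\|_{L^{p_s^*}}$; so it suffices to treat nonnegative minimizers. For these, the fractional P\'olya--Szeg\H{o} inequality $[U^{*}]_{s,p}\le[U]_{s,p}$ (with $U^{*}$ the symmetric decreasing rearrangement) together with $\|U^{*}\|_{L^{p_s^*}}=\|U\|_{L^{p_s^*}}$ shows that $U^{*}$ is again a minimizer; in particular a nonnegative, radially symmetric, radially nonincreasing minimizer exists. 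To reach the conclusion for \emph{every} minimizer, I would use the characterization of equality in the fractional P\'olya--Szeg\H{o} inequality (a Brothers--Ziemer-type rigidity for the Gagliardo seminorm): from $[U^{*}]_{s,p}=[U]_{s,p}$ one gets $U=U^{*}$ up to a translation, and the strong minimum principle applied to the equation in $(iii)$ yields strict monotonicity of the radial profile.

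Finally, $(iii)$ is a direct variational computation. Fix a nonnegative minimizer $U$ normalized by $\int_{\mathbb R^N}U^{p_s^*}\dd x=1$, so $[U]_{s,p}^p=S_{p,s}$. Differentiating the quotient $u\mapsto[u]_{s,p}^p\big/\big(\int_{\mathbb R^N}|u|^{p_s^*}\dd x\big)^{p/p_s^*}$ at $U$ in an arbitrary direction $v\in X_p^s$ (a minimum point of the quotient) and dividing by $p$ gives
\begin{equation*}
\int_{\mathbb R^{2N}}\frac{|U(x)-U(y)|^{p-2}\big(U(x)-U(y)\big)\big(v(x)-v(y)\big)}{|x-y|^{N+sp}}\dd x\dd y=S_{p,s}\int_{\mathbb R^N}U^{p_s^*-1}v\,\dd x,
\end{equation*}
which is $(iii)$. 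I expect the rigidity in $(ii)$ to be the main obstacle: the concentration-compactness argument for $(i)$ and the identity in $(iii)$ are by now routine, but showing that \emph{all} minimizers are translates of monotone radial profiles rests on the equality analysis in the fractional P\'olya--Szeg\H{o} inequality (or on a symmetry result for the limiting critical equation), which is the genuinely delicate ingredient and the reason the statement is imported from \cite{mosconi}.
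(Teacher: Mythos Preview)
The paper does not prove this proposition at all; it is simply quoted as \cite[Proposition 2.1]{mosconi} (with a pointer to \cite{Brasco}). So there is no ``paper's own proof'' to compare against. Your plan is the standard route taken in those references: concentration--compactness for $(i)$, fractional P\'olya--Szeg\H{o} plus its equality case for $(ii)$, and the first variation of the Rayleigh quotient for $(iii)$. You are right that the genuine content lies in $(ii)$: existence of \emph{some} radial decreasing minimizer is immediate from rearrangement, but the claim about \emph{every} minimizer requires the rigidity analysis for equality in the fractional P\'olya--Szeg\H{o} inequality, which is carried out in \cite{Brasco} rather than in \cite{mosconi} itself.

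Two small remarks. First, the identity in $(iii)$ as written holds only under the normalization $\|U\|_{L^{p_s^*}}=1$ (which you correctly impose); for a general minimizer a constant $\|U\|_{L^{p_s^*}}^{p-p_s^*}$ multiplies the right-hand side, and indeed the paper immediately rescales $U$ after stating the proposition. Second, the space $X_p^s$ in the statement is a slight abuse: on a bounded $\Omega$ the infimum $S_{p,s}$ is not attained in $X_p^s$, and your opening step---identifying $S_{p,s}$ with the best constant on $\mathcal D^{s,p}(\mathbb R^N)$ via density and scaling---is exactly what is needed to make sense of ``minimizer'' here.
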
\vspace*{.5cm}

Applying Proposition \ref{pmosconi}, we fix a radially symmetric nonnegative decreasing minimizer $U=U(r)$ of $S_{p,s}$. 
Multiplying $U$ by a positive constant, we may assume that
\begin{equation}\label{C3N_10}
(-\Delta)_p^sU=U^{p^*_s-1}.
\end{equation}
It follows from \eqref{C3N_9} that 
\begin{equation}\label{C3N_11}
\|U\|_{X_p^s}^p=\|U\|_{L^{p_s^*}(\mathbb{R}^N)}^{p_s^*}=\left(S_{p,s}\right)^{N/sp}.
\end{equation}
For any $\varepsilon>0$, the function
\begin{equation}\label{C3N_12}
U_{\varepsilon}(x)=\frac{1}{\varepsilon^{(N-sp)/p}}U\left(\frac{\vert x\vert}{\varepsilon}\right)
\end{equation}
is also a minimizer of $S_{p,s}$ satisfying  \eqref{C3N_10} and \eqref{C3N_11}, so after a rescaling we may assume that $U(0)=1$. Henceforth, $U$ will denote such a function and $\{U_\varepsilon\}_{\varepsilon>0}$ the associated family of minimizers given by \eqref{C3N_12}.

Since an explicit formula for a minimizer of $S_{p,s}$ is unknown, we make use of some asymptotic estimates obtained by Brasco, Mosconi and Squassina \cite{Brasco}, see also \cite[Lemma 2.2]{mosconi}.
\begin{lemma}\label{C3N_13}
There exist constants $c_1, c_2>0$ and $\theta>1$ such that for all $r\geq 1$,
\begin{equation*}
\frac{c_1}{r^{(N-sp)/(p-1)}}\leq U(r)\leq\frac{c_2}{r^{(N-sp)/(p-1)}} \quad \textrm{and}\quad  \frac{U(\theta r)}{U(r)}\leq\frac{1}{2}.
\end{equation*}
\end{lemma}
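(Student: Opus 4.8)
The plan is to establish the two-sided pointwise estimate
$$c_1\,r^{-\beta}\ \le\ U(r)\ \le\ c_2\,r^{-\beta},\qquad \beta:=\frac{N-sp}{p-1},$$
for all $r\ge 1$; the ratio bound then comes for free, since for $r\ge 1$ one has $U(\theta r)/U(r)\le (c_2/c_1)\,\theta^{-\beta}$, and after enlarging $c_2$ and shrinking $c_1$ (so that $c_2/c_1>1$) one may take $\theta:=(2c_2/c_1)^{1/\beta}>1$, which gives $U(\theta r)/U(r)\le 1/2$. So everything reduces to the decay estimate. First I would record the qualitative facts about $U$: by Proposition \ref{pmosconi} it is radial, nonnegative, nonincreasing in $|x|$, solves $(-\Delta)^s_pU=U^{p^*_s-1}$ in $\mathbb{R}^N$ with $U\in X^s_p\cap L^{p^*_s}(\mathbb{R}^N)$, and by the local boundedness and Hölder regularity theory for the fractional $p$-Laplacian it is bounded and continuous; together with $U(0)=1$ and monotonicity this yields $0<U(r)\le 1$ for every $r$.

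For the upper bound I would compare $U$ with an explicit barrier tuned to the exponent $\beta$. The point of this value of $\beta$ is a scaling identity: since $(-\Delta)^s_p$ is positively homogeneous of degree $p-1$ and scales like (length)$^{-sp}$, the radial power $V(x):=|x|^{-\beta}$ satisfies that $(-\Delta)^s_pV$ is radial and homogeneous of degree $-\beta(p-1)-sp=-N$, hence $(-\Delta)^s_pV(x)=C_{N,p,s}\,|x|^{-N}$ for a computable constant, so in particular $(-\Delta)^s_p\big(A\,V\big)(x)\ge -C\,A^{p-1}\,|x|^{-N}$ on $\{|x|\ge 1\}$. Combining integrability estimates for $U^{p^*_s-1}\in L^{(p^*_s)'}(\mathbb{R}^N)$ with the regularity estimates gives a first, rough algebraic decay $U(r)\le C\,r^{-\beta_0}$ with some $\beta_0>0$; one then sharpens $\beta_0$ to $\beta$ in finitely many steps, using that $U^{p^*_s-1}$ decays strictly faster than $|x|^{-N}$ once $U$ itself decays like $|x|^{-\beta}$ (indeed $\beta(p^*_s-1)=N+sp/(p-1)>N$). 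At the last step $\overline w:=A\,|x|^{-\beta}$ satisfies $(-\Delta)^s_p\overline w\ge U^{p^*_s-1}$ on $\{|x|>1\}$ for $A$ large and dominates $U$ on $\{|x|=1\}$ once $A\ge U(1)$; since $U,\overline w\to 0$ at infinity, a comparison argument in the exterior domain $\{|x|>1\}$ (of the type carried out in \cite{Brasco}) gives $U\le \overline w$ there, which together with the trivial bound $U\le 1$ on $\{1\le|x|\le 2\}$ yields the upper estimate for all $r\ge 1$.

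For the lower bound I would exploit that $U$ is a positive $(s,p)$-superharmonic function, since $(-\Delta)^s_pU=U^{p^*_s-1}\ge 0$ and $U$ is of order one near the origin; the nonlocal tail in the definition of $(-\Delta)^s_p$ then forces a matching lower decay. Concretely, one builds a subsolution supported away from the origin, of the form $\underline w:=c_1\big(|x|^{-\beta}-(2|x|)^{-\beta}\big)$ (or invokes the corresponding exterior barrier of Brasco, Mosconi and Squassina), verifies $(-\Delta)^s_p\underline w\le 0\le U^{p^*_s-1}$ on the relevant region and $\underline w\le U$ on its inner boundary $\{|x|=1\}$ using the positivity and continuity of $U$, and concludes $U\ge c_1\,|x|^{-\beta}$ by comparison; alternatively, a weak Harnack (Bombieri--Giusti type) inequality for nonnegative $(s,p)$-superharmonic functions gives the same control. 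The main obstacle is the barrier analysis itself: with no explicit minimizer available, one must estimate the singular nonlocal integral defining $(-\Delta)^s_p(|x|^{-\beta})$ by hand, splitting $\mathbb{R}^N$ into regions near and far from the pole and from the evaluation point and using homogeneity, and one must control the sign of the constant $C_{N,p,s}$ (which behaves differently for $p>2$ and $p<2$, so this part splits into cases) as well as the finitely many bootstrap steps. All of this is carried out in Brasco, Mosconi and Squassina \cite{Brasco} (see also \cite[Lemma 2.2]{mosconi}); within the present paper it therefore suffices to cite those works, the discussion above only indicating how the estimates arise and why the stated ratio bound for a suitable $\theta>1$ is an immediate consequence of them.
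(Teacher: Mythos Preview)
Your proposal is correct and aligns with the paper's treatment: the paper does not prove this lemma at all but simply quotes it from Brasco, Mosconi and Squassina \cite{Brasco} and \cite[Lemma~2.2]{mosconi}, exactly as you indicate at the end of your sketch. Your additional heuristic explanation of the barrier/comparison argument and the observation that the ratio bound follows trivially from the two-sided decay estimate (via $U(\theta r)/U(r)\le (c_2/c_1)\theta^{-\beta}$) are accurate and go beyond what the paper itself supplies.
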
\vspace*{.5cm}

In order to apply the Linking Theorem with respect to the decomposition given by Proposition \ref{decomposition}, we consider the family of functions $\{U_\varepsilon\}_{\varepsilon>0}$ as defined in \eqref{C3N_12}. Without loss of generality, we suppose that $0 \in\Omega$. From now on, let us consider $\theta$ as given in Lemma \ref{C3N_13}. For $\varepsilon, \delta>0$ we define, as in Chen, Mosconi and Squassina \cite{Chen},
$$m_{\varepsilon,\delta}=\frac{U_{\varepsilon}(\delta)}{U_{\varepsilon}(\delta)-U_{\varepsilon}(\theta\delta)},$$ and also
$$g_{\varepsilon,\delta}(t):= \left\{ \begin{array}{ll}
0, &\textrm{if}\quad 0\leq t\leq U_\varepsilon(\theta\delta),\\
m^p_{\varepsilon,\delta}(t- U_\varepsilon(\theta\delta)), &\textrm{if}\quad U_\varepsilon(\theta\delta)\leq t\leq U_{\varepsilon}(\delta),\\
t-U_{\varepsilon}(\delta)(m_{\varepsilon,\delta}^{p-1}-1),&\textrm{if}\quad t\geq U_{\varepsilon}(\delta),
\end{array} \right.$$ 

Since
\begin{align*}
G_{\varepsilon,\delta}(t)&:=\int_{0}^t(g'_{\varepsilon,\delta}(\tau))^{\frac{1}{p}}d\tau=\left\{ \begin{array}{ll}
0, &\textrm{if}\quad 0\leq t\leq U_\varepsilon(\theta\delta),\\
m_{\varepsilon,\delta}(t- U_\varepsilon(\theta\delta)), &\textrm{if}\quad U_\varepsilon(\theta\delta)\leq t\leq U_{\varepsilon}(\delta),\\
t,&\textrm{if}\quad t\geq U_{\varepsilon}(\delta),
\end{array} \right.
\end{align*}
it is not difficult to verify that the functions $g_{\varepsilon,\delta}$ and $G_{\varepsilon,\delta}$ are non-decreasing and absolutely continuous.  

We now define the non-increasing, absolutely continuous and radially symmetric function
$$u_{\varepsilon,\delta}(r)=G_{\varepsilon,\delta}(U_{\varepsilon}(r))$$
which satisfies
\begin{equation}\label{C3N_19}
u_{\varepsilon,\delta}(r)= \left\{ \begin{array}{ll}
U_{\varepsilon}(r), &\textrm{if}\quad r\leq \delta,\\
0, &\textrm{if}\quad r\geq \theta\delta.
\end{array} \right.
\end{equation}
According to Ambrosio and Isernia \cite[p. 17]{AM}, for any $\delta\leq r\leq\theta\delta$ it holds
$$0\leq m_{\varepsilon,\delta}\big(U_{\varepsilon}(r)-U_{\varepsilon}(\theta\delta)\big)=U_{\varepsilon}(\delta)\left[\dfrac{U_{\varepsilon}(r)-U_{\varepsilon}(\theta\delta)}{U_{\varepsilon}(\delta)-U_{\varepsilon}(\theta\delta)}\right]\leq U_{\varepsilon}(\delta).$$
Therefore, from the definition of $G_{\varepsilon,\delta}$ and \eqref{C3N_19} follows that
\begin{equation*}
u_{\varepsilon,\delta}(r)\leq\left\{ \begin{array}{ll}
U_{\varepsilon}(r), &\textrm{if}\quad r< \theta\delta,\\
0, &\textrm{if}\quad r\geq \theta\delta.
\end{array} \right.
\end{equation*}

We denote by $P_{1}^{s}$ and $P_{2}^{s}$ the projections of ${X_p^s}$  in $\textrm{span}\{\varphi_1\}$ and $W$, respectively, and define
\begin{equation*}
e_{\varepsilon,\delta}=P_{2}^{s}u_{\varepsilon,\delta}\in W, 
\end{equation*}
and claim that $e_{\varepsilon,\delta}$ is a continuous function. 
As shown in \cite{Brasco}, we know that $U\in L^{\infty}(\mathbb{R}^N)\cap C^{0}(\mathbb{R}^N)$.
Since $e_{\varepsilon,\delta}=u_{\varepsilon,\delta}-P_{1}^{s}u_{\varepsilon,\delta}$, our claim is proved.

We now want to show that we can take $e=e_{\varepsilon,\delta}$ in the Linking Theorem. 
So, we need to show that $e_{\varepsilon,\delta}(0)>0$ for $\varepsilon>0$ sufficiently small. In order to do that, 
we obtain the inequalities of the next result using arguments similar to those used in \cite{Ch}, with the exception of 
\eqref {C3N_23}. The  estimate \eqref {C3N_23} follows from \cite[Lemma 2.4]{AM}. Therefore, we state the following result.
\begin{lemma}
\begin{equation}\label{C3N_22}
\| P_{1}^{s} u_{\varepsilon,\delta}\|_{L^{\infty}(\Omega)}\leq\left\{ \begin{array}{ll}
\| \varphi_1\|_{L^\infty(\Omega)}^pC_1 \varepsilon^{\frac{N}{p}}\vert\log(\frac{\varepsilon}{\delta})\vert, &\textrm{if}\quad  p=\frac{2N}{N+s}\\
\|\varphi_1\|_{L^\infty(\Omega)}^pC_1\varepsilon^{N-\frac{N-sp}{p}}, &\textrm{if}\quad 1< p<\frac{2N}{N+s},\\
\|\varphi_1\|_{L^\infty(\Omega)}^pC_1\delta^{N-\frac{N-sp}{p-1}}\varepsilon^{\frac{N-sp}{p(p-1)}}, &\textrm{if}\quad p> \frac{2N}{N+s}.
\end{array} \right.
\end{equation}
\begin{equation}\label{C3N_22.0}
\|e_{\varepsilon,\delta}\|_{L^{1}(\Omega)}\leq\left\{ \begin{array}{ll}
C_0\varepsilon^{\frac{N}{p}}\vert\log(\frac{\varepsilon}{\delta})\vert, &\textrm{if}\quad  p=\frac{2N}{N+s}\\
C_0\varepsilon^{N-\frac{N-sp}{p}}, &\textrm{if}\quad 1< p<\frac{2N}{N+s},\\
C_0\delta^{N-\frac{N-sp}{p-1}}\varepsilon^{\frac{N-sp}{p(p-1)}}, &\textrm{if}\quad p> \frac{2N}{N+s},
\end{array} \right.
\end{equation}
\begin{equation}\label{C3N_23}
\|e_{\varepsilon,\delta}\|_{L^{p}(\Omega)}^p\leq\left\{ \begin{array}{ll}
K_1\varepsilon^{sp}\vert\log(\frac{\varepsilon}{\delta})\vert^p, &\textrm{if}\quad  p=\frac{2N}{N+s}\,\textrm{ and }\,N>sp^2\\
K_1\varepsilon^{sp}, &\textrm{if}\quad 1< p<\frac{2N}{N+s}\,\textrm{ and }\,N>sp^2\\
K_1\varepsilon^{sp}, &\textrm{if}\quad p> \frac{2N}{N+s}\,\textrm{ and }\,N>sp^2,
\end{array} \right.
\end{equation}
\begin{equation}\label{C3N_24}
\|e_{\varepsilon,\delta}\|_{L^{p^*_s-1}(\Omega)}^{p^*_s-1}\leq\left\{ \begin{array}{ll}
K_2\varepsilon^{\frac{N-sp}{p}}\vert\log(\frac{\varepsilon}{\delta})\vert^{p^*_s-1}, &\textrm{if}\quad  p=\frac{2N}{N+s},\\
K_2\varepsilon^{\frac{N-sp}{p}}, &\textrm{if}\quad 1< p<\frac{2N}{N+s},\\
K_2\varepsilon^{\frac{N-sp}{p} }, &\textrm{if}\quad p> \frac{2N}{N+s},
\end{array} \right.
\end{equation}
\begin{equation*}
\hspace*{-.4cm}\left\vert\int_{\Omega}\left(\vert e_{\varepsilon,\delta}\vert^{p^*_s}-\vert u_{\varepsilon,\delta}\vert^{p^*_s}\right)\dd x\right|\leq \left\{\begin{array}{ll}
K_3\varepsilon^{N}\vert\log(\frac{\varepsilon}{\delta})\vert^{p^*_s}, &\textrm{if}\quad  p=\frac{2N}{N+s},\\
K_3\varepsilon^{N}+K_3\varepsilon^{N(p_s^*-1)}, &\textrm{if}\quad 1< p<\frac{2N}{N+s},\\
K_3\varepsilon^{\frac{N-sp}{p-1}}, &\textrm{if}\quad p> \frac{2N}{N+s}.
\end{array} \right.
\end{equation*}
\end{lemma}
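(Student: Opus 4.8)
The plan is to establish the five estimates in sequence, reducing everything to asymptotic information about $U_\varepsilon$ and its truncation $u_{\varepsilon,\delta}$ together with control of the projection $P_1^s u_{\varepsilon,\delta}$ onto $\mathrm{span}\{\varphi_1\}$. The starting point is the explicit formula $P_1^s u_{\varepsilon,\delta} = \big(A(\varphi_1)\cdot u_{\varepsilon,\delta}\big)\,\varphi_1$ (after suitable normalization by $A(\varphi_1)\cdot\varphi_1 = \lambda_1$), so that $\|P_1^s u_{\varepsilon,\delta}\|_{L^\infty(\Omega)} \leq \|\varphi_1\|_{L^\infty(\Omega)}\,|A(\varphi_1)\cdot u_{\varepsilon,\delta}|/\lambda_1$. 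Hence \eqref{C3N_22} follows once we bound $|A(\varphi_1)\cdot u_{\varepsilon,\delta}|$; since $\varphi_1$ is a weak solution of $(-\Delta)_p^s\varphi_1 = \lambda_1\varphi_1^{p-1}$ and $\varphi_1 \in L^\infty \cap C^{0,\alpha}$, one gets $A(\varphi_1)\cdot u_{\varepsilon,\delta} = \lambda_1 \int_\Omega \varphi_1^{p-1} u_{\varepsilon,\delta}\,\dd x$, which is estimated by $\|\varphi_1\|_{L^\infty}^{p-1}\|u_{\varepsilon,\delta}\|_{L^1(\Omega)}$. So the heart of \eqref{C3N_22} and \eqref{C3N_22.0} is a single computation: estimating $\|u_{\varepsilon,\delta}\|_{L^1(\Omega)}$ by splitting the integral over $\{r\leq\delta\}$ (where $u_{\varepsilon,\delta}=U_\varepsilon$) and $\{\delta\leq r\leq\theta\delta\}$ (where $u_{\varepsilon,\delta}\leq U_\varepsilon(\delta)$), changing variables $x = \varepsilon y$, and invoking the two-sided bound $U(r)\asymp r^{-(N-sp)/(p-1)}$ from Lemma \ref{C3N_13}; the three regimes $p=\frac{2N}{N+s}$, $p<\frac{2N}{N+s}$, $p>\frac{2N}{N+s}$ correspond exactly to whether the integral $\int_1^{R} r^{N-1-(N-sp)/(p-1)}\,\dd r$ is logarithmically divergent, convergent, or polynomially divergent. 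Then \eqref{C3N_22.0} follows from $\|e_{\varepsilon,\delta}\|_{L^1} \leq \|u_{\varepsilon,\delta}\|_{L^1} + \|P_1^s u_{\varepsilon,\delta}\|_{L^1}$ together with \eqref{C3N_22}.

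For \eqref{C3N_23} I would similarly write $\|e_{\varepsilon,\delta}\|_{L^p}^p \leq 2^{p-1}\big(\|u_{\varepsilon,\delta}\|_{L^p}^p + \|P_1^s u_{\varepsilon,\delta}\|_{L^p}^p\big)$; the term $\|u_{\varepsilon,\delta}\|_{L^p}^p$ is handled by the same change of variables, now producing $\varepsilon^{sp}$ times an integral $\int_1^{R}r^{N-1-p(N-sp)/(p-1)}\,\dd r$ whose convergence at infinity is precisely the condition $N>sp^2$, and the truncated tail near $r\in[\delta,\theta\delta]$ contributes a lower-order term; the projection term is controlled by \eqref{C3N_22} raised to the $p$-th power times $|\Omega|$, which is of higher order in $\varepsilon$. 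As noted in the excerpt, \eqref{C3N_23} can alternatively be invoked directly from \cite[Lemma 2.4]{AM}. Estimate \eqref{C3N_24} is the same mechanism with exponent $p_s^*-1 = (Np - N + sp)/(N - sp)$, for which one checks that $(p_s^*-1)(N-sp)/(p-1) > N$ always holds (so the integral over $\{r\geq 1\}$ converges with no dimensional restriction), yielding the clean factor $\varepsilon^{(N-sp)/p}$ in all three regimes. Finally, the last displayed inequality comparing $\int_\Omega |e_{\varepsilon,\delta}|^{p_s^*}\,\dd x$ with $\int_\Omega |u_{\varepsilon,\delta}|^{p_s^*}\,\dd x$ is obtained from the elementary inequality $\big||a+b|^{p_s^*} - |a|^{p_s^*}\big| \leq C(|a|^{p_s^*-1}|b| + |b|^{p_s^*})$ applied with $a = u_{\varepsilon,\delta}$, $b = -P_1^s u_{\varepsilon,\delta}$, then bounding $\int |u_{\varepsilon,\delta}|^{p_s^*-1}|P_1^s u_{\varepsilon,\delta}|\,\dd x \leq \|P_1^s u_{\varepsilon,\delta}\|_{L^\infty}\,\|u_{\varepsilon,\delta}\|_{L^{p_s^*-1}}^{p_s^*-1}$ via \eqref{C3N_22} and \eqref{C3N_24}, and $\int|P_1^s u_{\varepsilon,\delta}|^{p_s^*}\,\dd x \leq \|P_1^s u_{\varepsilon,\delta}\|_{L^\infty}^{p_s^*}|\Omega|$; combining the exponents of $\varepsilon$ in each of the three regimes gives the stated bounds (the term $K_3\varepsilon^{N(p_s^*-1)}$ in the middle case coming from the $\|P_1^s u_{\varepsilon,\delta}\|_{L^\infty}^{p_s^*}$ piece).

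The main obstacle I anticipate is not any single estimate but the bookkeeping of exponents across the three ranges of $p$, and in particular making sure the contribution of the annular truncation region $\{\delta \leq r \leq \theta\delta\}$ — where $u_{\varepsilon,\delta}$ is replaced by the linear-in-$U_\varepsilon$ interpolant rather than by $U_\varepsilon$ itself — is genuinely of the same or lower order than the main term; this requires using $u_{\varepsilon,\delta}(r) \leq U_\varepsilon(\delta)$ on that annulus together with the decay estimate evaluated at $r=\delta$, and tracking how $\delta$ (kept fixed, or coupled to $\varepsilon$) enters. A secondary technical point is justifying the identity $A(\varphi_1)\cdot u_{\varepsilon,\delta} = \lambda_1\int_\Omega \varphi_1^{p-1}u_{\varepsilon,\delta}\,\dd x$, which uses that $u_{\varepsilon,\delta}\in X_p^s$ is an admissible test function for the weak formulation of the eigenvalue problem; once that is in place the projection estimates are purely computational. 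I would organize the proof by first proving the $L^1$ bound on $u_{\varepsilon,\delta}$, deducing \eqref{C3N_22} and \eqref{C3N_22.0}, then treating \eqref{C3N_24}, then \eqref{C3N_23}, and finally assembling the last inequality from the previous ones.
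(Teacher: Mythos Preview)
Your proposal is correct and follows essentially the same approach as the paper, which does not give a detailed proof but refers to arguments in \cite{Ch} (and to \cite[Lemma 2.4]{AM} for \eqref{C3N_23}); your sketch is precisely that argument spelled out---reducing $\|P_1^s u_{\varepsilon,\delta}\|_{L^\infty}$ to $\|u_{\varepsilon,\delta}\|_{L^1}$ via the eigenfunction equation, computing the $L^\beta$ norms of $u_{\varepsilon,\delta}$ by scaling and the decay bound of Lemma \ref{C3N_13}, and assembling the remaining estimates by triangle-type inequalities. The exponent bookkeeping you anticipate is indeed the only real work, and your identification of the three regimes with the behaviour of $\int_1^R r^{N-1-\beta(N-sp)/(p-1)}\,\dd r$ is exactly right.
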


We now fix $K>0$. 
\begin{lemma}\label{lema1}
There exist $\varepsilon(K)>0$ and $\sigma>0$ such that
\begin{equation*}
B_{\sigma}(0)\subset\left\{ x\in\Omega\, :\, e_{\varepsilon,\delta}(x)>K\right\}:=\Omega_{\varepsilon,K}
\end{equation*}
for all $0<\varepsilon\leq\varepsilon(K)$.	

As a consequence, 
\begin{equation*}
\left\vert\displaystyle\int_{\Omega_{\varepsilon,K}}\vert e_{\varepsilon,\delta}\vert^{p^*_s}\dd x-\int_{\Omega}\vert u_{\varepsilon,\delta}\vert^{p^*_s} \dd x \right\vert\leq \left\{ \begin{array}{ll}
K_4\varepsilon^{N}\vert\log(\frac{\varepsilon}{\delta})\vert^{p^*_s}, &\textrm{if}\quad  p=\frac{2N}{N+s},\\
K_4\varepsilon^{N}+K_4\varepsilon^{N(p_s^*-1)}, &\textrm{if}\quad 1< p<\frac{2N}{N+s},\\
K_4\varepsilon^{\frac{N-sp}{p-1}}, &\textrm{if}\quad p> \frac{2N}{N+s},
\end{array} \right.
\end{equation*}
\end{lemma}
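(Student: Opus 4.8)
The plan is to first establish the set-inclusion $B_\sigma(0)\subset\Omega_{\varepsilon,K}$ by a pointwise lower bound on $e_{\varepsilon,\delta}$ near the origin. Recall $e_{\varepsilon,\delta}=u_{\varepsilon,\delta}-P_1^s u_{\varepsilon,\delta}$, and by \eqref{C3N_19} we have $u_{\varepsilon,\delta}(r)=U_\varepsilon(r)$ for $r\le\delta$, so $u_{\varepsilon,\delta}(x)=\varepsilon^{-(N-sp)/p}U(|x|/\varepsilon)$ there. Since $U(0)=1$ and $U$ is continuous, for $|x|\le\varepsilon$ (say) one gets $u_{\varepsilon,\delta}(x)\ge \tfrac12\varepsilon^{-(N-sp)/p}$, which blows up as $\varepsilon\to0$. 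On the other hand, the correction term is uniformly small: by \eqref{C3N_22}, $\|P_1^s u_{\varepsilon,\delta}\|_{L^\infty(\Omega)}\to 0$ as $\varepsilon\to0$ in all three regimes (each bound is a positive power of $\varepsilon$, possibly times a logarithm). Hence for $\varepsilon$ small, $e_{\varepsilon,\delta}(x)\ge u_{\varepsilon,\delta}(x)-\|P_1^s u_{\varepsilon,\delta}\|_{L^\infty(\Omega)}\ge \tfrac14\varepsilon^{-(N-sp)/p}>K$ on $B_\sigma(0)$ with $\sigma=\min\{\varepsilon,\delta\}$; more carefully, one fixes $\sigma>0$ small (independent of $\varepsilon$, using continuity of $U$ at $0$ to make $U(|x|/\varepsilon)\ge 1/2$ only requires $|x|/\varepsilon$ small, so actually $\sigma=c\varepsilon$), and then picks $\varepsilon(K)$ so small that the lower bound exceeds $K$. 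A cleaner route: choose $\sigma$ with $U(r)\ge 1/2$ for $0\le r\le 1$, set $\sigma_\varepsilon=\varepsilon$, and note $e_{\varepsilon,\delta}\ge \tfrac12\varepsilon^{-(N-sp)/p}-o(1)$ on $B_{\sigma_\varepsilon}(0)$; since $\sigma_\varepsilon\to0$ one must instead anchor $\sigma$ to $\delta$ (which is fixed): for $r\le\delta$, $U_\varepsilon(r)\ge U_\varepsilon(\delta)=\varepsilon^{-(N-sp)/p}U(\delta/\varepsilon)$, and using Lemma \ref{C3N_13}, $U(\delta/\varepsilon)\ge c_1(\varepsilon/\delta)^{(N-sp)/(p-1)}$ for $\varepsilon\le\delta$, giving $u_{\varepsilon,\delta}(x)\ge c_1\delta^{-(N-sp)/(p-1)}\varepsilon^{(N-sp)/(p(p-1))\cdot(\text{something})}$ on $B_\delta(0)$ — this does go to $0$ when $p>2$, so in fact the correct normalization is that near $0$ itself $u_{\varepsilon,\delta}=U_\varepsilon$ is large; I would take $\sigma$ to be any fixed radius with $\overline{B_\sigma(0)}\subset\Omega$ and $\sigma\le\delta$, and exploit that on $B_\sigma(0)$, $u_{\varepsilon,\delta}(x)=\varepsilon^{-(N-sp)/p}U(|x|/\varepsilon)$ with $|x|/\varepsilon\to\infty$ generically — so one cannot keep $\sigma$ fixed. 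The honest statement is that $\sigma=\sigma(\varepsilon)$ may shrink, but the lemma as stated wants a fixed $\sigma$; I expect the intended reading is $\sigma$ fixed small and the bound $u_{\varepsilon,\delta}\ge U_\varepsilon(\sigma)\to\infty$ as $\varepsilon\to0$ (valid since $(N-sp)/p>0$ dominates after accounting for $U(\sigma/\varepsilon)\sim (\varepsilon/\sigma)^{(N-sp)/(p-1)}$, i.e. $U_\varepsilon(\sigma)\sim \sigma^{-(N-sp)/(p-1)}\varepsilon^{(N-sp)(1/(p-1)-1/p)}=\sigma^{-(N-sp)/(p-1)}\varepsilon^{(N-sp)/(p(p-1))}\to 0$). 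So in the regime $p>2N/(N+s)$ one genuinely needs $\sigma\to 0$; I would therefore present $\sigma$ as depending on $\varepsilon$ with $\sigma\le \varepsilon$, on which $U(|x|/\varepsilon)\ge 1/2$, so $u_{\varepsilon,\delta}\ge\tfrac12\varepsilon^{-(N-sp)/p}\to\infty$, beating both $K$ and the $o(1)$ correction. I would write the proof this way and note the monotonicity allows $\Omega_{\varepsilon,K}\supset B_\sigma(0)$.

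Next, for the consequence, I would split the integral over $\Omega$ of $|u_{\varepsilon,\delta}|^{p_s^*}$ as the sum over $\Omega_{\varepsilon,K}$ and over $\Omega\setminus\Omega_{\varepsilon,K}$, and compare with $\int_{\Omega_{\varepsilon,K}}|e_{\varepsilon,\delta}|^{p_s^*}$. Write
\begin{align*}
\left|\int_{\Omega_{\varepsilon,K}}|e_{\varepsilon,\delta}|^{p_s^*}\dd x-\int_\Omega|u_{\varepsilon,\delta}|^{p_s^*}\dd x\right|
&\le\left|\int_{\Omega}\bigl(|e_{\varepsilon,\delta}|^{p_s^*}-|u_{\varepsilon,\delta}|^{p_s^*}\bigr)\dd x\right|
+\int_{\Omega\setminus\Omega_{\varepsilon,K}}|e_{\varepsilon,\delta}|^{p_s^*}\dd x.
\end{align*}
The first term on the right is controlled by the last displayed estimate of the preceding Lemma, which already matches the three right-hand sides claimed (up to renaming $K_3$ as $K_4$). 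So the whole task reduces to absorbing the leftover term $\int_{\Omega\setminus\Omega_{\varepsilon,K}}|e_{\varepsilon,\delta}|^{p_s^*}\dd x$ into those same bounds. On $\Omega\setminus\Omega_{\varepsilon,K}$ we have $0\le e_{\varepsilon,\delta}\le K$ (using that $e_{\varepsilon,\delta}\ge 0$, which follows from $u_{\varepsilon,\delta}\ge 0$ and the sign of the projection, or more safely $|e_{\varepsilon,\delta}|\le K$ off $\Omega_{\varepsilon,K}$ after checking the definition), hence $\int_{\Omega\setminus\Omega_{\varepsilon,K}}|e_{\varepsilon,\delta}|^{p_s^*}\le K^{p_s^*-1}\int_\Omega|e_{\varepsilon,\delta}|\,\dd x=K^{p_s^*-1}\|e_{\varepsilon,\delta}\|_{L^1(\Omega)}$, and now \eqref{C3N_22.0} gives exactly $C_0$ times $\varepsilon^{N/p}|\log(\varepsilon/\delta)|$, $\varepsilon^{N-(N-sp)/p}$, or $\delta^{N-(N-sp)/(p-1)}\varepsilon^{(N-sp)/(p(p-1))}$ in the three cases — and a quick check of exponents shows each of these is of the same or smaller order than the corresponding bound from the previous Lemma ($\varepsilon^N|\log|^{p_s^*}$, $\varepsilon^N+\varepsilon^{N(p_s^*-1)}$, $\varepsilon^{(N-sp)/(p-1)}$), at least for $\varepsilon$ small, so everything is absorbed into a single constant $K_4$.

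The main obstacle I anticipate is the exponent bookkeeping in the third regime $p>2N/(N+s)$: one must verify that $\delta^{N-(N-sp)/(p-1)}\varepsilon^{(N-sp)/(p(p-1))}$ (from $\|e_{\varepsilon,\delta}\|_{L^1}$) is $O(\varepsilon^{(N-sp)/(p-1)})$ as $\varepsilon\to0$, i.e. that $(N-sp)/(p(p-1))\ge (N-sp)/(p-1)$ — which is false, $1/p<1$ — so actually the $L^1$-leftover is the \emph{dominant} term there, not the $\varepsilon^{(N-sp)/(p-1)}$ one, and the stated right-hand side $K_4\varepsilon^{(N-sp)/(p-1)}$ would be too small. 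I would resolve this either by using instead the sharper bound $\int_{\Omega\setminus\Omega_{\varepsilon,K}}|e_{\varepsilon,\delta}|^{p_s^*}\le K^{p_s^*-p}\|e_{\varepsilon,\delta}\|_{L^p(\Omega)}^p$ and invoking \eqref{C3N_23} (which gives $K_1\varepsilon^{sp}$), checking $(N-sp)/(p-1)\le sp$ under the standing hypothesis $N>sp((p-1)^2+p)$ from Theorem \ref{t0}$(ii)$ — indeed that hypothesis is equivalent to $(N-sp)/(p-1)\ge sp^2/(p-1)+\cdots$, a computation I would carry out carefully — or by using $\|e_{\varepsilon,\delta}\|_{L^{p_s^*-1}}^{p_s^*-1}$ from \eqref{C3N_24} together with $|e_{\varepsilon,\delta}|\le K$ to interpolate, matching $\varepsilon^{(N-sp)/p}$. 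In short: pick whichever of \eqref{C3N_22.0}–\eqref{C3N_24} gives an exponent dominated by the target, and the dimension hypotheses of Theorem \ref{t0} are precisely what make one of them work; the rest is routine.
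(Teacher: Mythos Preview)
Your argument for the first claim is exactly the paper's: it simply writes $e_{\varepsilon,\delta}(0)\ge \varepsilon^{-(N-sp)/p}-\|P_1^s u_{\varepsilon,\delta}\|_{L^\infty(\Omega)}\to+\infty$ and then invokes continuity of $e_{\varepsilon,\delta}$. Your hesitation about whether $\sigma$ can be chosen independently of $\varepsilon$ is reasonable, but the paper is no more explicit on this point than you are.

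For the integral estimate the paper gives no proof beyond a citation to Lemma~2.4 of de~Figueiredo--Yang, so your decomposition is already more detailed than anything in the paper. The genuine gap in your plan is the exponent bookkeeping: the ``quick check'' fails in \emph{every} case, not only the third. For $1<p<2N/(N+s)$ the $L^1$ route gives a leftover of order $\varepsilon^{N-(N-sp)/p}$, and $N-(N-sp)/p<N$ since $N>sp$, so this never absorbs into $K_4\varepsilon^N$; similarly for $p=2N/(N+s)$ one has $N/p<N$. Your proposed patches via \eqref{C3N_23} or \eqref{C3N_24} also fall short: for $p>2N/(N+s)$, \eqref{C3N_24} yields exponent $(N-sp)/p$, which is strictly less than $(N-sp)/(p-1)$, and your hope that $(N-sp)/(p-1)\le sp$ under the hypothesis of Theorem~\ref{t0}(ii) is backwards --- that inequality is equivalent to $N\le sp^2$, contradicting the standing assumption $N>sp^2$. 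A sharper route is to bound $\int_{\Omega\setminus\Omega_{\varepsilon,K}}u_{\varepsilon,\delta}^{p_s^*}$ directly via the scaling $U_\varepsilon^{p_s^*}\dd x=U^{p_s^*}\dd y$ and the tail decay of $U$ from Lemma~\ref{C3N_13}, using that on $\Omega\setminus\Omega_{\varepsilon,K}$ one has $u_{\varepsilon,\delta}\le K+\|P_1^s u_{\varepsilon,\delta}\|_\infty$; this yields $O(\varepsilon^{N/p})$, still weaker than the printed bound whenever $N>sp^2$, but any positive power of $\varepsilon$ here already suffices for the only downstream use in Proposition~\ref{C3N_28.0}.
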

\begin{proof}It follows from the definition of  $u_{\varepsilon,\delta}$ , Lemma \ref{C3N_13}  and \eqref{C3N_22}  that
\begin{equation}\label{C3N_26.0}
e_{\varepsilon,\delta}(0)\geq\dfrac{1}{\varepsilon^{(N-sp)/p}}-\| P_{1}^{s} u_{\varepsilon,\delta}\|_{L^{\infty}(\Omega)}\to+\infty,
\end{equation}
as $\varepsilon\to0$, the proof of the claim is complete, because $e_{\varepsilon,\delta}$ is continuous.
The proof of the estimates are obtained by applying Lemma 2.4 in de Figueiredo and Yang \cite{deFiguei}.
$\hfill\Box$\end{proof}\vspace*{.5cm}

It follows from Lemma \ref{lema1} that there exists $\varepsilon_0>0$ such that
\begin{equation*}
e_{\varepsilon,\delta}\neq 0,\ 
0<\varepsilon\leq\varepsilon_0.
\end{equation*}

Thus, in the Linking Theorem, we can take  $e=e_{\varepsilon,\delta}$. 

If $\varepsilon\in (0,\varepsilon_0]$, take $R_1, R_2>0$ and define
\begin{equation}\label{C3N_28}
\hspace*{-.2cm}Q_{\varepsilon,R_1,R_2}=\{ u\in {X_p^s}: u=u_1+re_{\varepsilon,\delta},\, u_1\in \mbox{span}\{\varphi_1\}\cap\overline{B}_{R_1}(0),\, 0\leq r\leq R_2\}. 
\end{equation}

Let $\partial Q_{\varepsilon,R_1,R_2}$ be the boundary of $Q_{\varepsilon,R_1,R_2}$ in the finite dimensional space $\textrm{span}\{\varphi_1\}\,\oplus \textrm{span}\{e_{\varepsilon,\delta}\}$. 

We denote $O(\varepsilon^{\omega})$ for $\omega\geq 0$ if $\vert O(\varepsilon^{\omega})\vert\leq C\varepsilon^{\omega}$ for some $C>0
$ not depending on $\varepsilon>0$. We remark that $O(\varepsilon^{\omega})$ is not always positive.

The next elementary result can be found in Mosconi, Perera, Squassina and Yang \cite[p. 17]{mosconi}.
\begin{lemma}\label{C3N_34}
Given $k>1$ and $p-1< \tau < p$, there exists a constant $C = C(k, q) > 0$ such that
\begin{equation*}
\vert a+b\vert^p\leq k\vert a\vert^p+\vert b\vert^p+C\vert a\vert^{p-\tau}\vert b\vert^{\tau},\;\forall\, a,b\in\mathbb{R}.
\end{equation*}
\end{lemma}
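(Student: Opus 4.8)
The inequality is elementary and positively homogeneous of degree $p$, so the plan is to reduce it to a one‑variable statement. First I would use $|a+b|\le|a|+|b|$ together with the monotonicity of $t\mapsto t^p$ on $[0,\infty)$ to reduce the claim to
\[
(x+y)^p\le kx^p+y^p+Cx^{p-\tau}y^\tau\qquad\text{for all }x,y\ge0.
\]
When $y=0$ this holds because $k>1$ gives $x^p\le kx^p$, and both sides vanish if also $x=0$; so one may assume $x,y>0$. Setting $x=ty$ with $t>0$ and dividing by $y^p$ (legitimate since the cross term then equals $t^{p-\tau}y^p$), the inequality becomes equivalent to
\[
(1+t)^p-kt^p-1\le Ct^{p-\tau}\qquad\text{for all }t>0.
\]

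The core of the argument is then to show that $\Gamma(t):=\bigl((1+t)^p-kt^p-1\bigr)t^{\tau-p}$ is bounded above on $(0,\infty)$. It is continuous there, so it suffices to understand its two limits. As $t\to0^+$ one has $(1+t)^p=1+pt+O(t^2)$ and, since $p>1$, $t^p=o(t)$; hence $(1+t)^p-kt^p-1=pt+o(t)$ and $\Gamma(t)=p\,t^{1+\tau-p}+o(t^{1+\tau-p})\to0$, where the hypothesis $\tau>p-1$ is used precisely to make the exponent $1+\tau-p$ positive, so that $\Gamma$ extends continuously to $[0,\infty)$ with $\Gamma(0)=0$. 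As $t\to+\infty$, dividing by $t^p$ gives $\bigl((1+t)^p-kt^p-1\bigr)/t^p\to1-k<0$, whence $\Gamma(t)\sim(1-k)t^\tau\to-\infty$, the hypothesis $k>1$ being exactly what produces the negative leading coefficient. A continuous function on $[0,\infty)$ that tends to $-\infty$ attains a finite maximum $M\ge\Gamma(0)=0$; taking $C:=M+1>0$ settles the reduced inequality, hence the lemma (with $C$ depending on $k$, $p$, $\tau$).

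There is no serious obstacle here: the whole proof rests on matching the decay of $(1+t)^p-kt^p-1$ with that of $t^{p-\tau}$ at $t=0$ and at $t=\infty$, and the two hypotheses $\tau>p-1$ and $k>1$ are each consumed by exactly one of these limits. The remaining assumption $\tau<p$ plays no role in the proof itself; it only guarantees that $x^{p-\tau}y^\tau$ is a genuine interpolation term with both exponents in $(0,p)$, so that the statement is not vacuous. One could instead derive the estimate from a suitable application of Young's inequality, but the homogeneity reduction above is shorter and makes the role of each hypothesis transparent.
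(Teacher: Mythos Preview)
Your proof is correct. The paper itself does not prove this lemma but merely cites \cite{mosconi} (p.~17) as its source, so there is no in-paper argument to compare against. Your reduction by homogeneity to the one-variable inequality $(1+t)^p-kt^p-1\le Ct^{p-\tau}$, followed by the analysis of $\Gamma(t)=\bigl((1+t)^p-kt^p-1\bigr)t^{\tau-p}$ at $t\to0^+$ (where $\tau>p-1$ is used) and at $t\to+\infty$ (where $k>1$ is used), is a standard and self-contained treatment; the observation that $\tau<p$ is not needed for the bound itself is also accurate.
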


As an immediately consequence of Lemma \ref{C3N_34}, we have
\begin{lemma}\label{C3N_35}
There exist constants $C_1, C_2>0$ such that 
\begin{equation*}
\vert a+b\vert^p\leq C_1\vert a\vert^p+C_2\vert b\vert^p,\;\forall\, a,b\in\mathbb{R}.
\end{equation*}
\end{lemma}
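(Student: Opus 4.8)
The plan is to prove the auxiliary inequality $\vert a+b\vert^p \le C_1\vert a\vert^p + C_2\vert b\vert^p$ directly from Lemma \ref{C3N_34}, treating it purely as an algebraic consequence. First I would invoke Lemma \ref{C3N_34} with a convenient admissible choice of the parameters: pick any $k>1$ (say $k=2$) and any exponent $\tau$ with $p-1<\tau<p$ (such a $\tau$ exists since $p>1$, so the interval is nonempty). Lemma \ref{C3N_34} then supplies a constant $C=C(k,\tau)>0$ with $\vert a+b\vert^p \le k\vert a\vert^p + \vert b\vert^p + C\vert a\vert^{p-\tau}\vert b\vert^{\tau}$ for all $a,b\in\mathbb{R}$.

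The remaining task is to absorb the mixed term $C\vert a\vert^{p-\tau}\vert b\vert^{\tau}$ into pure powers of $\vert a\vert^p$ and $\vert b\vert^p$. Here I would apply Young's inequality with conjugate exponents $\frac{p}{p-\tau}$ and $\frac{p}{\tau}$ (both are genuinely larger than $1$ because $0<p-\tau<p$ and $0<\tau<p$, and their reciprocals sum to $1$): for every $\eta>0$ there is $C_\eta>0$ such that $\vert a\vert^{p-\tau}\vert b\vert^{\tau} \le \eta\vert a\vert^p + C_\eta\vert b\vert^p$. Substituting this back gives $\vert a+b\vert^p \le (k + C\eta)\vert a\vert^p + (1 + C\,C_\eta)\vert b\vert^p$, and setting $C_1 = k + C\eta$ and $C_2 = 1 + C\,C_\eta$ (with, e.g., $\eta=1$) yields the claim. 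Both constants are strictly positive and independent of $a,b$, as required.

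There is really no hard part here: the statement is elementary and, strictly speaking, even follows more crudely from convexity of $t\mapsto t^p$ (giving $\vert a+b\vert^p \le 2^{p-1}(\vert a\vert^p+\vert b\vert^p)$), so the only "obstacle" is cosmetic — namely to present it as an immediate corollary of Lemma \ref{C3N_34} as the text announces, which the two-line argument above accomplishes. One should just be mildly careful that the exponents $p/(p-\tau)$ and $p/\tau$ used in Young's inequality are indeed conjugate and admissible, which is guaranteed by the constraint $p-1<\tau<p$ inherited from Lemma \ref{C3N_34}.
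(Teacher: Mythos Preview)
Your proposal is correct and is exactly the approach the paper intends: the paper offers no proof beyond declaring the lemma an ``immediate consequence of Lemma~\ref{C3N_34}'', and your two-line argument (apply Lemma~\ref{C3N_34} with some $k>1$ and $\tau\in(p-1,p)$, then absorb the cross term $C\vert a\vert^{p-\tau}\vert b\vert^{\tau}$ via Young's inequality with conjugate exponents $p/(p-\tau)$ and $p/\tau$) is precisely what that phrase means. Your side remark that convexity already gives $\vert a+b\vert^p\le 2^{p-1}(\vert a\vert^p+\vert b\vert^p)$ is of course also valid and even simpler, but the paper's phrasing ties the result to Lemma~\ref{C3N_34}, which your main argument respects.
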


The proof of the next result is obtained by applying Lemma \ref{C3N_34}	with $a=\| r e_{\varepsilon,\delta}\|_{L^p(\Omega)}$ and $b=\|u_1+re_{\varepsilon,\delta}\|_{L^p(\Omega)}$ and considering the cases $0<\tau<1$ and $\tau>1$. In the latter case we then apply Lemma \ref{C3N_35} and once again Lemma \ref{C3N_34} with $a=\|r e_{\varepsilon,\delta}\|$ and $b=\|u_1\|_{X_p^s}$.
\begin{lemma}\label{C3N_42}
Suppose that $u_1\in \textrm{span}\{\varphi_1\}$ and $\tau \in (p-1,p)$. Then there exists a constant $C_{*}>0$ such that
\begin{align*}
\dfrac{1}{p}\|u_1+re_{\varepsilon,\delta}\|_{X_p^s}^p-\dfrac{a}{p}\|u_1+r e_{\varepsilon,\delta}\|_{L^p(\Omega)}^p &\leq \dfrac{1}{p}\|u_1\|_{X_p^s}^p-\dfrac{a}{p}\|u_1\|_{L^p(\Omega)}^p+C_{*}r^p\|e_{\varepsilon,\delta}\|_{L^p(\Omega)}^p\\
&\quad+C_{*}r^{p-\tau}\|e_{\varepsilon,\delta}\|_{L^p(\Omega)}^{p-\tau}\|u_1\|_{X_p^s}^\tau.
\end{align*}
\end{lemma}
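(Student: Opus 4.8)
The plan is to estimate the two seminorm/norm terms separately and then combine. First I would write $u = u_1 + r e_{\varepsilon,\delta}$ and split
\[
\tfrac1p\|u_1+re_{\varepsilon,\delta}\|_{X_p^s}^p - \tfrac{a}{p}\|u_1+re_{\varepsilon,\delta}\|_{L^p(\Omega)}^p
= \tfrac1p\|u_1+re_{\varepsilon,\delta}\|_{X_p^s}^p - \tfrac{a}{p}\bigl(\|u_1+re_{\varepsilon,\delta}\|_{L^p(\Omega)}^p\bigr),
\]
and work on the $L^p$-term first, since that is where the sign of $a$ helps us. Because $e_{\varepsilon,\delta}\in W$ and $u_1\in\mathrm{span}\{\varphi_1\}$, the two pieces are "almost orthogonal'' in the relevant sense; I would apply Lemma \ref{C3N_34} with $a=\|r e_{\varepsilon,\delta}\|_{L^p(\Omega)}$, $b=\|u_1+re_{\varepsilon,\delta}\|_{L^p(\Omega)}$ and an exponent $\tau\in(p-1,p)$ to bound $\|u_1\|_{L^p(\Omega)}^p$ (or a suitable rearrangement) from below, which after multiplying by $-a/p<0$ produces an upper bound for $-\tfrac{a}{p}\|u_1+re_{\varepsilon,\delta}\|_{L^p(\Omega)}^p$ in terms of $-\tfrac{a}{p}\|u_1\|_{L^p(\Omega)}^p$ plus controlled error terms involving $r^p\|e_{\varepsilon,\delta}\|_{L^p}^p$ and the mixed term $r^{p-\tau}\|e_{\varepsilon,\delta}\|_{L^p}^{p-\tau}\|u_1\|_{L^p}^\tau$.

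For the seminorm term, the key point is that $\|u_1+re_{\varepsilon,\delta}\|_{X_p^s}^p$ must be bounded above by $\|u_1\|_{X_p^s}^p$ plus errors. Here I would again invoke Lemma \ref{C3N_34}, this time at the level of the Gagliardo integrand or, more simply, at the level of the norms after using convexity/monotonicity of the $p$-th power and the defining orthogonality $A(\varphi_1)\cdot e_{\varepsilon,\delta}=0$. Concretely, split into the two cases $0<\tau<1$ and $\tau\ge 1$ as indicated in the statement of this lemma: in the first case a direct application of Lemma \ref{C3N_34} with $a=\|re_{\varepsilon,\delta}\|$, $b=\|u_1\|_{X_p^s}$ suffices; in the second case one first applies Lemma \ref{C3N_35} to linearize $\|u_1+re_{\varepsilon,\delta}\|_{X_p^s}^p$ into $C_1\|u_1\|_{X_p^s}^p+C_2\|re_{\varepsilon,\delta}\|_{X_p^s}^p$-type pieces and then reapplies Lemma \ref{C3N_34} to recover the sharp leading coefficient $1/p$ in front of $\|u_1\|_{X_p^s}^p$ (absorbing the overshoot into the mixed error term). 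Since $\mathrm{span}\{\varphi_1\}$ is one–dimensional and $e_{\varepsilon,\delta}$ is a fixed function, all norms of $u_1$ are comparable (equivalence of norms in finite dimensions), so replacing $\|u_1\|_{L^p}$ by $\|u_1\|_{X_p^s}$ in the mixed term costs only a constant.

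Finally I would add the two estimates. The leading terms $\tfrac1p\|u_1\|_{X_p^s}^p$ and $-\tfrac{a}{p}\|u_1\|_{L^p(\Omega)}^p$ appear with exactly the right constants, the pure $e_{\varepsilon,\delta}$ errors collect into $C_*r^p\|e_{\varepsilon,\delta}\|_{L^p(\Omega)}^p$ (note $\|e_{\varepsilon,\delta}\|_{X_p^s}$ and $\|e_{\varepsilon,\delta}\|_{L^p(\Omega)}$ are comparable by the continuous imbedding, up to relabeling $C_*$), and the mixed errors collect into $C_*r^{p-\tau}\|e_{\varepsilon,\delta}\|_{L^p(\Omega)}^{p-\tau}\|u_1\|_{X_p^s}^\tau$, which is the claimed inequality. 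The main obstacle I anticipate is obtaining the sharp constant $1/p$ (rather than a larger constant) in front of $\|u_1\|_{X_p^s}^p$ while keeping the error genuinely lower order in $r$; this is exactly why the two–case argument with the double application of Lemma \ref{C3N_34} is needed, and why one cannot simply use Lemma \ref{C3N_35} alone. Care is also required to ensure the exponent $\tau\in(p-1,p)$ is chosen compatibly in both the $L^p$ and $X_p^s$ estimates so that a single $\tau$ works throughout.
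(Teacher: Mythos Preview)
Your overall plan matches the paper's proof sketch almost verbatim: apply Lemma~\ref{C3N_34} with $a=\|re_{\varepsilon,\delta}\|_{L^p(\Omega)}$ and $b=\|u_1+re_{\varepsilon,\delta}\|_{L^p(\Omega)}$ for the $L^p$-part, split into the cases $\tau\lessgtr 1$, and in the second case bring in Lemma~\ref{C3N_35} followed by a second application of Lemma~\ref{C3N_34} with $a=\|re_{\varepsilon,\delta}\|$ and $b=\|u_1\|_{X_p^s}$; the one-dimensionality of $\mathrm{span}\{\varphi_1\}$ is used exactly as you say to interchange norms on $u_1$.

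There is, however, one genuine error in your final collection step. You claim that $\|e_{\varepsilon,\delta}\|_{X_p^s}$ and $\|e_{\varepsilon,\delta}\|_{L^p(\Omega)}$ are ``comparable by the continuous imbedding'', so that the pure $e_{\varepsilon,\delta}$ errors can be rewritten as $C_*r^p\|e_{\varepsilon,\delta}\|_{L^p(\Omega)}^p$. The imbedding $X_p^s\hookrightarrow L^p(\Omega)$ only gives $\|e_{\varepsilon,\delta}\|_{L^p}\le C\|e_{\varepsilon,\delta}\|_{X_p^s}$, not the reverse, and $e_{\varepsilon,\delta}$ does \emph{not} live in a fixed finite-dimensional subspace. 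In fact the reverse inequality fails uniformly in $\varepsilon$: by \eqref{eq002} the seminorm $\|e_{\varepsilon,\delta}\|_{X_p^s}^p$ is bounded below away from zero, whereas by \eqref{C3N_23} one has $\|e_{\varepsilon,\delta}\|_{L^p(\Omega)}^p\le K_1\varepsilon^{sp}\to 0$. Consequently the inequality of the lemma, read literally with $\|e_{\varepsilon,\delta}\|_{L^p(\Omega)}^p$ in the first error term, cannot hold with $C_*$ independent of $\varepsilon$ (take $u_1=0$, $r=1$). The first error term should read $C_*r^p\|e_{\varepsilon,\delta}\|_{X_p^s}^p$; this is also how the lemma is actually used in the proof of Proposition~\ref{C3N_28.0}, where the bound \eqref{eq002} on $\|e_{\varepsilon,\delta}\|_{X_p^s}^p$ is invoked immediately after the lemma. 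With that correction your argument goes through and no comparability claim is needed.
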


The next result follows immediately by considering $f:[0,\infty)\to\mathbb{R}$ given by $f(t)=Bt^p/p-Ct^{p_s^*}/p_s^*$.
\begin{lemma}\label{C5P_33.0.0}
For constants $B>0$ and $C>0$ we have $$\displaystyle\max_{t\geq 0}\left(\frac{Bt^p}{p}-\frac{Ct^{p_s^*}}{p_s^*}\right)=
\frac{s}{N}\left(\frac{B}{C^\frac{p}{p_s^*}}\right)^{\frac{N}{sp}}.$$
\end{lemma}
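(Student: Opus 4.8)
The plan is to prove the elementary single-variable maximization: define $f:[0,\infty)\to\mathbb{R}$ by $f(t)=\frac{B}{p}t^p-\frac{C}{p_s^*}t^{p_s^*}$. Since $p<p_s^*$ and $B,C>0$, we have $f(0)=0$, $f(t)\to-\infty$ as $t\to\infty$, and $f(t)>0$ for $t>0$ small; hence $f$ attains a positive maximum at some interior point $t_0>0$, which must be a critical point. First I would compute $f'(t)=B t^{p-1}-C t^{p_s^*-1}=t^{p-1}\bigl(B-Ct^{p_s^*-p}\bigr)$, so the unique positive critical point is $t_0=\bigl(B/C\bigr)^{1/(p_s^*-p)}$.

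Next I would substitute $t_0$ back into $f$. Writing $t_0^{p_s^*-p}=B/C$, one gets $t_0^p=\bigl(B/C\bigr)^{p/(p_s^*-p)}$ and $t_0^{p_s^*}=\bigl(B/C\bigr)^{p_s^*/(p_s^*-p)}$, so
\begin{align*}
f(t_0)&=\frac{B}{p}\Bigl(\frac{B}{C}\Bigr)^{\frac{p}{p_s^*-p}}-\frac{C}{p_s^*}\Bigl(\frac{B}{C}\Bigr)^{\frac{p_s^*}{p_s^*-p}}
=\Bigl(\frac{B}{C}\Bigr)^{\frac{p}{p_s^*-p}}\cdot\frac{B}{C}\Bigl(\frac{C}{p}-\frac{C}{p_s^*}\Bigr)\cdot\frac{1}{B}\cdot B\\
&=\Bigl(\frac{B^{p_s^*}}{C^{p}}\Bigr)^{\frac{1}{p_s^*-p}}\Bigl(\frac{1}{p}-\frac{1}{p_s^*}\Bigr).
\end{align*}
Then I would simplify the scalar factors: since $p_s^*=pN/(N-sp)$, a direct computation gives $\frac{1}{p}-\frac{1}{p_s^*}=\frac{1}{p}-\frac{N-sp}{pN}=\frac{s}{N}$, and the exponent satisfies $\frac{1}{p_s^*-p}\cdot\frac{1}{p}=\frac{N-sp}{p(pN-pN+sp^2)}\cdot\dots$; more cleanly, $p_s^*-p=\frac{pN-p(N-sp)}{N-sp}=\frac{sp^2}{N-sp}$, so $\frac{p_s^*}{p_s^*-p}=\frac{N}{sp}$ and $\frac{p}{p_s^*-p}=\frac{N-sp}{sp}=\frac{N}{sp}-1$. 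Hence $\bigl(B^{p_s^*}/C^{p}\bigr)^{1/(p_s^*-p)}=\bigl(B/C^{p/p_s^*}\bigr)^{N/sp}$, which yields exactly $f(t_0)=\frac{s}{N}\bigl(B/C^{p/p_s^*}\bigr)^{N/sp}$.

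Finally I would record that since $f'$ changes sign from positive to negative only at $t_0$, this critical value is indeed the global maximum on $[0,\infty)$, completing the proof. There is essentially no obstacle here — the only thing requiring care is the bookkeeping of exponents, in particular verifying the two identities $\frac{1}{p}-\frac{1}{p_s^*}=\frac{s}{N}$ and $\frac{p_s^*}{p_s^*-p}=\frac{N}{sp}$, which follow immediately from the definition $p_s^*=pN/(N-sp)$.
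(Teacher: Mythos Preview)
Your proof is correct and follows exactly the approach the paper indicates: the paper simply remarks that the result ``follows immediately by considering $f:[0,\infty)\to\mathbb{R}$ given by $f(t)=Bt^p/p-Ct^{p_s^*}/p_s^*$'', and you have carried out precisely that elementary one-variable optimization, with the exponent bookkeeping via $p_s^*-p=sp^2/(N-sp)$ done correctly.
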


To estimate $I_{\lambda,s}$ on  $\partial Q_{\varepsilon,R_1,R_2}$ we apply Lemmas \ref{C3N_34} to \ref{C5P_33.0.0} and the two next results. The first is a special case of the one proved by Chen, Mosconi and Squassina \cite[Lemma 2.11]{Chen}, while the proof of the second can be found in \cite[Lemma 2.7]{mosconi}.
\begin{lemma}\label{a.5}For any $\beta>0$ and $0<2\varepsilon\leq\delta<\theta^{-1}\textup{dist}(0,\partial\Omega)$ we have
\begin{equation*}
\|u_{\varepsilon,\delta}\|_{L^\beta(\Omega)}\leq\left\{ \begin{array}{ll}
C_\beta \varepsilon^{N-\frac{N-ps}{p}\beta}\vert\log(\frac{\varepsilon}{\delta})\vert, &\textrm{if}\quad 
\beta=\frac{p^*_s}{p'},\\
C_\beta\varepsilon^{\frac{N-sp}{p(p-1)}\beta}\delta^{N-\frac{N-sp}{p-1}\beta}, &\textrm{if}\quad \beta<\frac{p^*_s}{p'},\\
C_\beta\delta^{N-\frac{N-sp}{p}\beta}, &\textrm{if}\quad \beta>\frac{p^*_s}{p'},
\end{array} \right.
\end{equation*}
where $p'=p/(p-1)$.
\end{lemma}
Observe that, taking $\beta=1$, 
we obtain \begin{equation}\label{C3N_20}
\|u_{\varepsilon,\delta}\|_{L^1(\Omega)}\leq\left\{ \begin{array}{ll}
C_1 \varepsilon^{\frac{N}{p}}\vert\log(\frac{\varepsilon}{\delta})\vert, &\textrm{if}\quad  p=\frac{2N}{N+s},\\
C_1\varepsilon^{N-\frac{N-sp}{p}}, &\textrm{if}\quad 1< p<\frac{2N}{N+s},\\
C_1\delta^{N-\frac{N-sp}{p-1}}\varepsilon^{\frac{N-sp}{p(p-1)}}, &\textrm{if}\quad p> \frac{2N}{N+s}.
\end{array} \right.
\end{equation}

Since $(p^*_s-1)p/(p-1)>p_s^*$, it also follows from Lemma \ref{a.5} by taking $\beta=p^*_s-1$ that
\begin{equation}\label{C3N_21}
\|u_{\varepsilon,\delta}\|_{L^{p^*_s-1}(\Omega)}^{p^*_s-1}\leq C_{p^*_{s}-1} \varepsilon^{\frac{N-sp}{p}}.
\end{equation}

\begin{lemma} \label{a.6}
There exists a constant $C = C(N, p, s) > 0$ such that for any $0<\varepsilon\leq \delta/2$,
\begin{itemize}
\item[$(i)$]$\|u_{\varepsilon, \delta}\|_{W_0^{s,p}}^{p} \leq S_{p,s}^{N / s p}+C\left(\frac{\varepsilon}{\delta}\right)^{(N-s p) /(p-1)}.$
\item[$(ii)$] ${\|u_{\varepsilon, \delta}\|_{ L^p(\Omega)}^{p} \geq\left\{\begin{array}{ll}{\frac{1}{C} \varepsilon^{s p}\log \left(\frac{\delta}{\varepsilon}\right),} 
& {N=s p^{2}},\vspace*{.2cm} \\ {\frac{1}{C} \varepsilon^{s p},} & {N>s p^{2}}.
\end{array}\right.}$
\item[$(iii)$] $\|u_{\varepsilon, \delta}\|_{L^{p_{s}^{*}}(\Omega)}^{p_{s}^{*}} \geq S_{p,s}^{N / s p}-C\left(\frac{\varepsilon}{\delta}\right)^{N /(p-1)}.$
\end{itemize}
\end{lemma}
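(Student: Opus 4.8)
The plan is to prove the three bounds separately; (ii) and (iii) are the soft ones and (i) is the delicate nonlocal estimate. For all three the starting point is the same: by \eqref{C3N_19} one has $u_{\varepsilon,\delta}(r)=U_\varepsilon(r)$ on $B_\delta$ and $u_{\varepsilon,\delta}\equiv 0$ outside $B_{\theta\delta}$, together with the scaling $U_\varepsilon(r)=\varepsilon^{-(N-sp)/p}U(r/\varepsilon)$, the normalizations \eqref{C3N_10}--\eqref{C3N_11}, and the two-sided decay of Lemma \ref{C3N_13}. I will use the standing hypotheses $0<2\varepsilon\le\delta$ and $\delta<\theta^{-1}\mathrm{dist}(0,\partial\Omega)$, which guarantee $\delta/\varepsilon\ge 2$, $\theta\delta/\varepsilon\ge 1$ and $\overline{B_{\theta\delta}}\subset\Omega$, so that Lemma \ref{C3N_13} and the ratio bound $U(\theta r)/U(r)\le\tfrac12$ apply at every radius that occurs.

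For (ii) I would discard the nonnegative annulus contribution and change variables, getting $\|u_{\varepsilon,\delta}\|_{L^p(\Omega)}^p\ge\int_{B_\delta}U_\varepsilon(|x|)^p\,\dd x=\varepsilon^{sp}\int_{B_{\delta/\varepsilon}}U(|y|)^p\,\dd y$, then split the last integral over $B_1$ and $B_{\delta/\varepsilon}\setminus B_1$, where $U(r)\ge c_1 r^{-(N-sp)/(p-1)}$; in polar coordinates the radial integrand is a multiple of $r^{(sp^2-N)/(p-1)-1}$, whose exponent is $-1$ exactly when $N=sp^2$ (hence the $\log(\delta/\varepsilon)$ factor) and is $<-1$ when $N>sp^2$ --- in which case $U\in L^p(\mathbb R^N)$, precisely because $N>sp^2$, so the integral stays bounded below by a positive constant and one gets $\tfrac1C\varepsilon^{sp}$. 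For (iii), $\|u_{\varepsilon,\delta}\|_{L^{p_s^*}(\Omega)}^{p_s^*}\ge\int_{B_\delta}U_\varepsilon^{p_s^*}=\|U_\varepsilon\|_{L^{p_s^*}(\mathbb R^N)}^{p_s^*}-\int_{\mathbb R^N\setminus B_\delta}U_\varepsilon^{p_s^*}$, which by \eqref{C3N_11} is $S_{p,s}^{N/sp}$ minus a tail; the same change of variables rewrites the tail as $\int_{\mathbb R^N\setminus B_{\delta/\varepsilon}}U^{p_s^*}$, and with $U(r)\le c_2 r^{-(N-sp)/(p-1)}$ the radial integrand decays like $r^{-N/(p-1)-1}$, so the tail is $O\big((\varepsilon/\delta)^{N/(p-1)}\big)$, exactly the asserted error.

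For (i) I would exploit the structure $u_{\varepsilon,\delta}=G_{\varepsilon,\delta}\circ U_\varepsilon$ with $G'_{\varepsilon,\delta}=(g'_{\varepsilon,\delta})^{1/p}$, $G_{\varepsilon,\delta}(0)=0$, $g_{\varepsilon,\delta}$ nondecreasing and piecewise affine with bounded slopes, so that $g_{\varepsilon,\delta}(U_\varepsilon)$ is the composition of a Lipschitz function with $U_\varepsilon$, has compact support in $B_{\theta\delta}\subset\Omega$, and hence lies in $X_p^s$ --- a legitimate test function. Hölder's inequality with exponents $p$ and $p/(p-1)$ gives, for $a\ge b\ge 0$,
$$
|G_{\varepsilon,\delta}(a)-G_{\varepsilon,\delta}(b)|^p=\Big(\int_b^a (g'_{\varepsilon,\delta}(\tau))^{1/p}\,\dd\tau\Big)^p\le (a-b)^{p-1}\big(g_{\varepsilon,\delta}(a)-g_{\varepsilon,\delta}(b)\big),
$$
and since $g_{\varepsilon,\delta}$ is nondecreasing the right side equals $|a-b|^{p-2}(a-b)(g_{\varepsilon,\delta}(a)-g_{\varepsilon,\delta}(b))$ for all $a,b$. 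Plugging $a=U_\varepsilon(x)$, $b=U_\varepsilon(y)$ into the Gagliardo double integral and then invoking the Euler--Lagrange identity \eqref{C3N_10} for $U_\varepsilon$ yields
$$
\|u_{\varepsilon,\delta}\|_{X_p^s}^p\le A(U_\varepsilon)\cdot g_{\varepsilon,\delta}(U_\varepsilon)=\int_{B_{\theta\delta}}U_\varepsilon^{p_s^*-1}\,g_{\varepsilon,\delta}(U_\varepsilon)\,\dd x .
$$

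It then remains to split this integral over $B_\delta$ and over $B_{\theta\delta}\setminus B_\delta$ (it vanishes outside $B_{\theta\delta}$). On $B_\delta$, where $U_\varepsilon\ge U_\varepsilon(\delta)$, one has $g_{\varepsilon,\delta}(U_\varepsilon)\le U_\varepsilon+C\,U_\varepsilon(\delta)$ with $C$ controlled by $\sup_\varepsilon m_{\varepsilon,\delta}$, so this block contributes $\int_{B_\delta}U_\varepsilon^{p_s^*}\le\|U_\varepsilon\|_{L^{p_s^*}(\mathbb R^N)}^{p_s^*}=S_{p,s}^{N/sp}$ plus $C\,U_\varepsilon(\delta)\int_{B_\delta}U_\varepsilon^{p_s^*-1}$; on the annulus $g_{\varepsilon,\delta}(U_\varepsilon)=m_{\varepsilon,\delta}^p(U_\varepsilon-U_\varepsilon(\theta\delta))\le C\,U_\varepsilon(\delta)$ and $U_\varepsilon^{p_s^*-1}\le U_\varepsilon(\delta)^{p_s^*-1}$, so that block is $\le C\,U_\varepsilon(\delta)^{p_s^*}|B_{\theta\delta}|$. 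Now $1\le m_{\varepsilon,\delta}\le 2$ because $U_\varepsilon(\theta\delta)/U_\varepsilon(\delta)=U(\theta\delta/\varepsilon)/U(\delta/\varepsilon)\le\tfrac12$ by Lemma \ref{C3N_13}; and, again by Lemma \ref{C3N_13} and scaling, $U_\varepsilon(\delta)\le c_2\varepsilon^{(N-sp)/(p(p-1))}\delta^{-(N-sp)/(p-1)}$, $\int_{\mathbb R^N}U_\varepsilon^{p_s^*-1}=C\varepsilon^{(N-sp)/p}$ and $|B_{\theta\delta}|=C\delta^N$. Multiplying these out shows the first error is $O\big((\varepsilon/\delta)^{(N-sp)/(p-1)}\big)$ and the second $O\big((\varepsilon/\delta)^{N/(p-1)}\big)$; since $\varepsilon/\delta\le\tfrac12$ and $N/(p-1)\ge(N-sp)/(p-1)$, both collapse into $C(\varepsilon/\delta)^{(N-sp)/(p-1)}$, proving (i). I expect the main obstacle to be exactly this step (i): establishing the algebraic inequality for $G_{\varepsilon,\delta}$, checking that $g_{\varepsilon,\delta}(U_\varepsilon)$ is a bona fide compactly supported test function for the equation satisfied by $U_\varepsilon$ on all of $\mathbb R^N$, and then carrying out the exponent bookkeeping on the two regions --- each piece routine on its own, but requiring care.
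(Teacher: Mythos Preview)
Your proof is correct and follows precisely the approach of \cite[Lemma~2.7]{mosconi}, which is exactly what the paper cites for this lemma (the paper does not give its own proof). The key device for (i)---the H\"older inequality $|G_{\varepsilon,\delta}(a)-G_{\varepsilon,\delta}(b)|^p\le |a-b|^{p-2}(a-b)\big(g_{\varepsilon,\delta}(a)-g_{\varepsilon,\delta}(b)\big)$ combined with testing the Euler--Lagrange equation \eqref{C3N_10} against the compactly supported function $g_{\varepsilon,\delta}(U_\varepsilon)$---is the argument of Mosconi--Perera--Squassina--Yang, and your exponent bookkeeping on the two regions is accurate (in fact on $B_\delta$ one even has $g_{\varepsilon,\delta}(U_\varepsilon)\le U_\varepsilon$, so the ``first error'' term you carry along is not needed, but it does no harm).
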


In our development, we need an auxiliary result that was obtained in \cite[Lemma 2.5]{deFiguei} when $s = 1$ and $p=2$. It is easily adapted for 
$s\in (0, 1)$ and $p>1$.
\begin{lemma}\label{C5P_33.0}
Let $u,v\in L^r(\Omega)$ with $p\leq r\leq p_s^*.$ If $\omega\subset\Omega$ and $u+v>0$ on $\omega,$ then
$$\bigg\vert\int_{\omega}(u+v)^r \dd x-\int_{\omega}\vert u\vert^r \dd x-\int_{\omega}\vert v\vert^r \dd x\bigg\vert\leq 
C\int_{\omega}\big(\vert u\vert^{r-1}\vert v\vert+\vert u\vert\vert v\vert^{r-1}\big) \dd x,$$ where $C$ depends only on $r.$
\end{lemma}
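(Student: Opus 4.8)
\textbf{Proof proposal for Lemma \ref{C5P_33.0}.}

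The plan is to prove the inequality pointwise almost everywhere on $\omega$ and then integrate. Fix $x\in\omega$ and set $a=u(x)$, $b=v(x)$, so that $a+b>0$. The claim reduces to the elementary inequality
\begin{equation*}
\bigl\vert (a+b)^r-\vert a\vert^r-\vert b\vert^r\bigr\vert\leq C\bigl(\vert a\vert^{r-1}\vert b\vert+\vert a\vert\,\vert b\vert^{r-1}\bigr),
\end{equation*}
valid for all $a,b\in\mathbb{R}$ with $a+b>0$ and $r\ge 1$, with $C=C(r)$. Once this is established, integrating over $\omega$ and using $u+v>0$ there gives the stated bound. Note that both sides of the integrated inequality are finite: since $u,v\in L^r(\Omega)$, Hölder's inequality (with exponents $r/(r-1)$ and $r$) shows $\vert u\vert^{r-1}\vert v\vert\in L^1(\Omega)$, and similarly for the other term, while $(u+v)^r$, $\vert u\vert^r$, $\vert v\vert^r$ are all in $L^1(\omega)$.

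To prove the pointwise inequality I would argue by homogeneity and a compactness/continuity argument. Both sides are positively homogeneous of degree $r$ in $(a,b)$, so we may normalize $\vert a\vert+\vert b\vert=1$ (the case $a=b=0$ being trivial, and excluded anyway since $a+b>0$). On the compact set $\{(a,b):\vert a\vert+\vert b\vert=1,\ a+b\ge 0\}$ the function
\begin{equation*}
F(a,b)=\bigl\vert (a+b)^r-\vert a\vert^r-\vert b\vert^r\bigr\vert,\qquad H(a,b)=\vert a\vert^{r-1}\vert b\vert+\vert a\vert\,\vert b\vert^{r-1}
\end{equation*}
are continuous, and $H$ vanishes only when $a=0$ or $b=0$. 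At those points $F$ also vanishes (if $b=0$ then $F=\vert a^r-\vert a\vert^r\vert$, which is $0$ since $a=a+b>0$; similarly if $a=0$). So one needs a little care near the zero set of $H$: on a neighbourhood of $b=0$ (with $a$ close to $1$), expand $(a+b)^r-a^r$ by the mean value theorem as $r\xi^{r-1}b$ for some $\xi$ between $a$ and $a+b$, bounded above by a constant, giving $F\le C\vert b\vert\le C H$; and $\vert b\vert^r\le\vert b\vert\le CH$ there too. The symmetric estimate holds near $a=0$ when $a+b>0$ forces $b$ close to $1$. Away from the zero set of $H$, $H$ is bounded below by a positive constant and $F$ is bounded above, so $F\le CH$ trivially. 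Patching these cases yields the uniform constant $C(r)$.

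The main obstacle is precisely the behaviour near the zero set of $H$, i.e. controlling $F$ by $H$ when one of $a,b$ is small: a naive bound $\vert(a+b)^r\vert\le 2^{r-1}(\vert a\vert^r+\vert b\vert^r)$ is not sharp enough, since it does not produce the mixed product on the right-hand side. The mean value theorem (or the binomial-type estimate $\vert(a+b)^r-a^r\vert\le C(\vert a\vert^{r-1}\vert b\vert+\vert b\vert^r)$ for $a>0$) resolves this; one must also use the hypothesis $a+b>0$ to discard the term $\vert b\vert^r$ by absorbing it, which works because $\vert b\vert^r\le\vert b\vert^{r-1}\vert a+b\vert\le\vert b\vert^{r-1}(\vert a\vert+\vert b\vert)$ is not quite circular—instead, when $\vert b\vert\le\vert a\vert$ one simply has $\vert b\vert^r\le\vert a\vert^{r-1}\vert b\vert\le H$, and when $\vert b\vert>\vert a\vert$ the roles swap. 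This bookkeeping is routine but is where all the work lies; everything else (homogeneity reduction, integration, finiteness via Hölder) is immediate.
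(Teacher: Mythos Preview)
Your proposal is correct and follows the natural route: reduce to a pointwise inequality for real numbers and integrate. The paper itself does not give a proof but simply cites \cite[Lemma 2.5]{deFiguei}, so your self-contained argument via homogeneity and compactness is in fact more detailed than what the paper offers. It is worth noting that the pointwise inequality you establish,
\[
\bigl\vert (a+b)^r-\vert a\vert^r-\vert b\vert^r\bigr\vert\leq C\bigl(\vert a\vert^{r-1}\vert b\vert+\vert a\vert\,\vert b\vert^{r-1}\bigr)\qquad (a+b>0),
\]
is exactly Lemma~\ref{stroock} (the Stroock inequality) specialized to the case $a+b>0$, since then $(a+b)^r=\vert a+b\vert^r$; so one could alternatively invoke that lemma directly and skip the compactness argument. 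Either way the integration step and the finiteness check via H\"older are as you describe.
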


Adapting ideas from Miyagaki, Motreanu and Pereira \cite{O} and de Figueiredo and Yang \cite{deFiguei}, we obtain the desired  estimate $I_{\lambda,s}$ on $\partial Q_{\varepsilon,R_1,R_2}$.
\begin{proposition}\label{C3N_28.0} Consider $Q_{\varepsilon,R_1,R_2}$ defined in \eqref{C3N_28}. There exist $R_1>0$ and $R_2>0$ large enough such that 
\begin{equation*}
I_{\lambda,s}(u)\leq\frac{\lambda}{q}\int_{\Omega}\vert u\vert^q \dd x,\,\forall\, u\in\partial Q_{\varepsilon,R_1,R_2},
\end{equation*}
for all $\varepsilon>0$ small enough and all $\lambda>0$.
\end{proposition}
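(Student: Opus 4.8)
The plan is to estimate $I_{\lambda,s}$ on each of the three pieces comprising $\partial Q_{\varepsilon,R_1,R_2}$, namely: (a) the ``bottom'' $\{r=0\}$, where $u=u_1\in\mbox{span}\{\varphi_1\}\cap\overline B_{R_1}(0)$; (b) the ``lateral'' face $\{\|u_1\|_{X_p^s}=R_1,\ 0\le r\le R_2\}$; and (c) the ``top'' $\{r=R_2,\ \|u_1\|_{X_p^s}\le R_1\}$. On each piece I want to bound $I_{\lambda,s}(u)-\frac{\lambda}{q}\int_\Omega|u|^q\,\dd x$ above by a negative quantity (for the first two faces, after choosing $R_1$ large) or by an $\varepsilon$-small quantity dominated by the negative main term (for the top face). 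Throughout, writing $u=u_1+re_{\varepsilon,\delta}$, the quantity to control is
\begin{equation*}
I_{\lambda,s}(u)-\frac{\lambda}{q}\int_\Omega|u|^q\,\dd x
=\frac1p\|u_1+re_{\varepsilon,\delta}\|_{X_p^s}^p-\frac ap\|u_1+re_{\varepsilon,\delta}\|_{L^p(\Omega)}^p-\frac b{p^*_s}\int_\Omega(u^+)^{p^*_s}\,\dd x.
\end{equation*}

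For face (a) the term $re_{\varepsilon,\delta}$ is absent, so the expression is $\frac1p\|u_1\|_{X_p^s}^p-\frac ap\|u_1\|_{L^p(\Omega)}^p-\frac b{p^*_s}\int_\Omega(u_1^+)^{p^*_s}$; since $u_1=t\varphi_1$ we have $\|u_1\|_{X_p^s}^p=\lambda_1\|u_1\|_{L^p(\Omega)}^p$ and $a>\lambda_1$, so this is $\le\frac1p(\lambda_1-a)\|u_1\|_{L^p(\Omega)}^p\le 0$, which already gives the claimed bound on this face for any $R_1$. For face (b), I apply Lemma~\ref{C3N_42} with some fixed $\tau\in(p-1,p)$ to get
\begin{equation*}
\frac1p\|u_1+re_{\varepsilon,\delta}\|_{X_p^s}^p-\frac ap\|u_1+re_{\varepsilon,\delta}\|_{L^p(\Omega)}^p\le\frac1p(\lambda_1-a)\|u_1\|_{L^p(\Omega)}^p+C_*r^p\|e_{\varepsilon,\delta}\|_{L^p(\Omega)}^p+C_*r^{p-\tau}\|e_{\varepsilon,\delta}\|_{L^p(\Omega)}^{p-\tau}\|u_1\|_{X_p^s}^\tau,
\end{equation*}
drop the (nonpositive) critical term, and use $\|u_1\|_{X_p^s}=R_1$ together with $\|u_1\|_{L^p(\Omega)}^p=R_1^p/\lambda_1$ and $\eqref{C3N_23}$ (so $\|e_{\varepsilon,\delta}\|_{L^p(\Omega)}^p\le K_1\varepsilon^{sp}|\log(\varepsilon/\delta)|^p$ is bounded for $\varepsilon$ small). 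The right side is then $\le\frac{\lambda_1-a}{p\lambda_1}R_1^p+C_*R_2^pK_1+C_*R_2^{p-\tau}K_1^{(p-\tau)/p}R_1^\tau$; since $\tau<p$ the leading power in $R_1$ is negative, so for $R_2$ first chosen and then $R_1$ taken large enough this is negative. For face (c), fix $R_1$ and take $r=R_2$; here I expand the critical term with Lemma~\ref{C5P_33.0} applied on $\Omega_{\varepsilon,K}$ (where $u_1+R_2e_{\varepsilon,\delta}>0$, using Lemma~\ref{lema1} and that $|u_1|$ is bounded on $\Omega_{\varepsilon,K}$ while $e_{\varepsilon,\delta}\to\infty$ there for $\varepsilon$ small), using $\eqref{C3N_21}$, $\eqref{C3N_24}$, $\eqref{C3N_22.0}$ and Lemma~\ref{a.6}(iii) to compare $\int_{\Omega_{\varepsilon,K}}(u_1+R_2e_{\varepsilon,\delta})^{p^*_s}$ with $R_2^{p^*_s}\|u_{\varepsilon,\delta}\|_{L^{p^*_s}(\Omega)}^{p^*_s}=R_2^{p^*_s}(S_{p,s}^{N/sp}+O(\varepsilon^{\text{pos}}))$, and likewise Lemma~\ref{a.6}(i) for the seminorm and Lemma~\ref{C3N_42} again for the $\|\cdot\|_{X_p^s}^p-a\|\cdot\|_{L^p}^p$ combination. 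The upshot is that, as $R_2\to\infty$, the leading behaviour is $\frac{R_2^p}{p}S_{p,s}^{N/sp}-\frac b{p^*_s}R_2^{p^*_s}S_{p,s}^{N/sp}+(\text{lower order in }R_2)+O(\varepsilon^{\text{pos}})$, which is $\le 0$ for $R_2$ large since $p<p^*_s$, with the $\varepsilon$-error absorbed.

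The main obstacle is face (c): one must be careful that the positive cross-terms produced when splitting $(u_1+R_2e_{\varepsilon,\delta})^{p^*_s}$ via Lemma~\ref{C5P_33.0} — which involve $\|e_{\varepsilon,\delta}\|_{L^{p^*_s-1}}^{p^*_s-1}\|u_1\|_\infty$ and $\|e_{\varepsilon,\delta}\|_{L^1}$ type integrals — are all $O(\varepsilon^{\text{positive}})$ by $\eqref{C3N_24}$ and $\eqref{C3N_22.0}$, hence negligible, \emph{and} that the negative term $-\frac b{p^*_s}R_2^{p^*_s}S_{p,s}^{N/sp}$ genuinely dominates the positive seminorm term $\frac{R_2^p}{p}S_{p,s}^{N/sp}$ uniformly in the allowed range of $u_1$ (here $\|u_1\|_{X_p^s}\le R_1$ with $R_1$ already fixed from face (b), so the cross-terms $C_*R_2^{p-\tau}\|e_{\varepsilon,\delta}\|_{L^p}^{p-\tau}R_1^\tau$ from Lemma~\ref{C3N_42} are bounded). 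Since all estimates are uniform in $\lambda>0$ (the parameter $\lambda$ has been moved to the right-hand side), choosing $R_1,R_2$ and then $\varepsilon$ small enough completes the proof; we subtract $\frac\lambda q\int_\Omega|u|^q$ back to obtain the stated inequality. $\hfill\Box$
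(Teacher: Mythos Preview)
Your three--face decomposition matches the paper's $\Gamma_1\cup\Gamma_2\cup\Gamma_3$, and face (a) is handled identically. However, your treatment of faces (b) and (c) contains a circular choice of $R_1,R_2$ that the paper deliberately avoids.

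On face (b) you drop the critical term, bound $\|e_{\varepsilon,\delta}\|_{L^p}^p$ by a constant $K_1$ (not by $K_1\varepsilon^{sp}$), and conclude that ``for $R_2$ first chosen and then $R_1$ taken large enough'' the expression is negative. On face (c) you then write ``with $R_1$ already fixed from face (b)'' and argue ``as $R_2\to\infty$''. You cannot fix $R_2$, choose $R_1=R_1(R_2)$ large, and afterward let $R_2\to\infty$: this is not a consistent order of quantifiers. The paper breaks this circularity by coupling $R_2=2R_1$ with $R_1=\varepsilon^{-\gamma}$, and---crucially---by \emph{not} discarding the critical term on the portion of $\Gamma_2$ where $r>r_0$: there the term $-\dfrac{b}{p_s^*}r^{p_s^*}\displaystyle\int_{\Omega_{\varepsilon,\delta}}\Big(\frac{u_1}{r}+e_{\varepsilon,\delta}\Big)^{p_s^*}$ is kept, Lemma~\ref{C5P_33.0} is applied on $\Omega_{\varepsilon,\delta}=\{e_{\varepsilon,\delta}>K(R_1)\}$ with threshold $K(R_1)\sim R_1$, and Lemma~\ref{C5P_33.0.0} yields an $r$--independent bound absorbed by $-cR_1^p$.

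There is a related gap in your positivity claim on face (c). You assert ``$e_{\varepsilon,\delta}\to\infty$ on $\Omega_{\varepsilon,K}$ for $\varepsilon$ small'', but by definition of $\Omega_{\varepsilon,K}$ one only has $e_{\varepsilon,\delta}>K$ there, for a \emph{fixed} $K$. To guarantee $u_1+R_2 e_{\varepsilon,\delta}>0$ on that set you need $R_2 K>\|u_1\|_{L^\infty}\le C_3 R_1$; since your face (b) argument forces $R_1$ large relative to $R_2$, this fails. The paper handles $\Gamma_3$ differently: with $R_2=2R_1$ the threshold $C_3 R_1/R_2=C_3/2$ is an absolute constant, so on $\Omega'_\varepsilon=\{e_{\varepsilon,\delta}>C_3/2+1\}$ one gets $u_1/R_2+e_{\varepsilon,\delta}\ge 1$ and the crude lower bound $\int_{\Omega'_\varepsilon}(u_1/R_2+e_{\varepsilon,\delta})^{p_s^*}\ge |D_\varepsilon|>0$ suffices. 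Your argument can be salvaged by either (i) exploiting the \emph{smallness} $\|e_{\varepsilon,\delta}\|_{L^p}^p\le K_1\varepsilon^{sp}$ on face (b)---so that any fixed $R_1>0$ works once $\varepsilon$ is small---and then fixing a ratio $R_1/R_2$ so the threshold $K$ stays bounded; or (ii) following the paper and retaining the critical term on face (b) for $r$ large.
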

\begin{proof} We write $\partial Q_{\varepsilon,R_1,R_2}=\Gamma_1\cup\Gamma_2\cup\Gamma_3$,
with
\begin{align*}
\Gamma_1&=B_{R_1}\cap\textrm{span}\{\varphi_1\},\\
\Gamma_2&=\{ u\in {X_p^s}\, : \, u=u_1+r e_{\varepsilon,\delta},\; u_1\in \textrm{span}\{\varphi_1\},\;\|u_1\|_{X_p^s}=R_1,\; 0\leq r\leq R_2\},\\
\Gamma_3&=\{ u\in {X_p^s}\, : \, u=u_1+R_2 e_{\varepsilon,\delta},\; u_1\in \textrm{span}\{\varphi_1\},\;\|u_1\|_{{X_p^s}}\leq R_1\}.
\end{align*}

We consider $I_{\lambda,s}$ in the three parts of the boundary $Q_{\delta,R_1,R_2}$. If $u\in\Gamma_1$, then $u=t\varphi_1$ and we obtain
\begin{align*}
I_{\lambda,s}(u)&\leq\frac{\vert t\vert^p}{p}(\lambda_1-a)\int_{\Omega}\vert \varphi_1\vert^p\dd x
+\frac{\lambda}{q}\int_{\Omega}\vert t \varphi_1\vert^{q}\dd x\leq\frac{\lambda}{q}\int_{\Omega}\vert u\vert^q \dd x,\,\forall\, u\in\Gamma_1,
\end{align*}
as desired.

If $u\in\Gamma_2$, then $u=u_1+r e_{\varepsilon,\delta}\in \textrm{span}\{\varphi_1\}\,\oplus \textrm{span}\{e_{\varepsilon,\delta}\}$, with  $\|u_1\|_{{X_p^s}}=R_1$. Since $P_{2}^{s}$ is bounded in ${X_p^s}$, there exist $C_1>0$ such that $$\|e_{\varepsilon,\delta}\|_{{X_p^s}}=\|P_{2}^{s}u_{\varepsilon,\delta}\|_{{X_p^s}}\leq C_1\|u_{\varepsilon,\delta}\|_{{X_p^s}}.$$

It follows then from Lemma \ref{a.6}$(i)$ the existence of $C>0$ such that, for all $0<\varepsilon\leq \widehat{\varepsilon_0}=\min\big\{\delta/2,\varepsilon_0\big\}$, we have
\begin{equation}\label{eq002}
\|e_{\varepsilon,\delta}\|_{{X_p^s}}^p\leq C_1^p\|u_{\varepsilon,\delta}\|_{{X_p^s}}^p\leq C_1^pS_{p,s}^{N/sp}+C_1^pC\left(\frac{\varepsilon}{\delta}\right)^{(N-sp)/(p-1)}.
\end{equation} 

We conclude that $\eta:=\displaystyle{\sup_{0<\varepsilon\leq\widehat{\varepsilon_0}}}\|e_{\varepsilon,\delta}\|_{{X_p^s}}$ is finite. 

In order to satisfy the condition $R_2\|e_{\varepsilon,\delta}\|>\rho$ in the Linking Theorem for $0<\varepsilon\leq \widehat{\varepsilon_0}$ small enough, with $\delta<\theta^{-1}\textup{dist}(0,\partial\Omega)/2$ as in Lemma \ref{a.5} and $\rho>0$ as in Proposition \ref{C2P_25}, we must have
$$R_2\eta\geq R_2\|e_{\varepsilon,\delta}\|_{{X_p^s}}>\rho,$$
showing that $\rho/\eta$ is a lower bound for $R_2$. 

Thus, we define $r_0=\max\{\rho/\eta, 1\}$ and consider two cases: 

\noindent \textbf{a)}  $0\leq r\leq r_0$. 
Since all norms are equivalent on finite dimensional spaces 
\begin{align*}
I_{\lambda,s}(u)
&\leq \;\frac{1}{p}\left(1-\frac{a}{\lambda_1} \right)R_1^p+C_{*}r_0^p\eta^p+C_{*}r_0^{p-\tau}\eta^{p-\tau}R_1^\tau+\frac{\lambda}{q}\int_{\Omega}\vert u\vert^{q}\dd x.
\end{align*}
Since $a>\lambda_1$, the result is obtained in this case for all $R_1> 0$ large enough to be fixed later.

\noindent\textbf{b)} $r> r_0$. We suppose $R_1\geq 1$ and denote
\begin{equation*}
K(R_1):=\frac{1}{r_0}\sup\left\{ \|u_1\|_{L^{\infty}(\Omega)}\, : \, u_1\in \textrm{span}\{\varphi_1\},\, \|u_1\|_{{X_p^s}}=R_1 \right\}\in[c_0R_1, c_1R_1],
\end{equation*}
with positive constants $c_0$ and $c_1$. We introduce the open set $$\Omega_{\varepsilon,\delta}=\left\{x\in\Omega\, :\, e_{\varepsilon,\delta}(x)>K(R_1)\right\}.$$
It follows from \eqref{C3N_26.0} that $0\in\Omega_{\varepsilon,\delta}$ if  $\varepsilon\in(0,\widehat{\varepsilon_0}]$. 
	
If $x\in\Omega_{\varepsilon,\delta}$ and $r>r_0$, we have $e_{\varepsilon,\delta}(x)>\vert u_1(x)/r\vert\geq-\dfrac{u_1(x)}{r}$
a.e. in  $\Omega$. That is, 
\begin{equation}\label{C3N_47}
e_{\varepsilon,\delta}(x)+\frac{u_1(x)}{r}>0
\end{equation}
for all $x\in\Omega_{\varepsilon,\delta}$, $r>r_0$ and $u_1\in\textrm{span}\{\varphi_1\}$ with $\|u_1\|_{{X_p^s}}=R_1.$

We now take $R_2=2R_1$. By Lemma \ref{C3N_42} and using the estimate \eqref{eq002}, we can write
\begin{align*}
I_{\lambda,s}(u)\leq &\frac{1}{p}\left(1-\frac{a}{\lambda_1} \right)R_1^p+C_{*}r^p\big(C_1^pS_{p,s}^{N/sp}+
C_0\big)+2^{p-\tau}C_{*}R_1^p\|e_{\varepsilon,\delta}\|_{L^p(\Omega)}^{p-\tau} \\
&-\frac{b}{p^*_s}r^{p^*_s}\int_{\Omega_{\varepsilon,\delta}}\left(\frac{u_1}{r}+e_{\varepsilon,\delta}\right)^{p_s^*}+\frac{\lambda}{q}\int_{\Omega}\vert u\vert^{q}\dd x.
\end{align*}
Now, applying Lemma \ref{C5P_33.0} for $u=u_1/r$, $v=e_{\varepsilon,\delta}$ and $\omega=\Omega_{\varepsilon,\delta}$, we obtain
\begin{align*}
I_{\lambda,s}(u)
	\leq &\frac{1}{p}\left(1-\frac{a}{\lambda_1} \right)R_1^p+C_{*}r^p\big(C_1^pS_{p,s}^{N/sp}+C_0\big)+2^{p-\tau}C_{*}R_1^p\|e_{\varepsilon,\delta}\|_{L^p(\Omega)}^{p-\tau} \\&-\frac{b}{p^*_s}r^{p^*_s}\left[\| e_{\varepsilon,\delta}\|_{L^{p^*_s}(\Omega_{\varepsilon,\delta})}^{p^*_s}-C_2\left(\| e_{\varepsilon,\delta}\|_{L^{1}(\Omega)}R_1^{p^*_s-1}+\| e_{\varepsilon,\delta}\|^{p^*_s-1}_{L^{p^*_s-1}(\Omega)}R_1  \right) \right]\\& +\frac{\lambda}{q}\int_{\Omega}\vert u\vert^{q}\dd x
	\end{align*}
for a positive constant $C_2>0.$
	
We consider the case where $p>2N/(N+s)$ and $N>sp((p-1)^2 +p)>sp^2$, since the case
$1<p\leq 2N/(N+s)$ and $N>sp^2$ is analogous.

The estimates \eqref{C3N_22.0},  \eqref{C3N_23}, \eqref{C3N_24} and Lemma \ref{lema1}  combined with Lemma \ref{a.6} imply that
\begin{align*}
I_{\lambda,s}(u)\leq & \;\frac{1}{p}\left[\left(1-\frac{a}{\lambda_1} \right)+2^{p-\tau}C_{*}\varepsilon^{s(p-\tau)} \right]R_1^p+C_{*}r^p\big(C_1^pS_{p,s}^{N/sp}+C_0\big)\\
&-\frac{b}{p^*_s}r^{p^*_s}
\left[S_{p,s}^{N/sp}+O(\varepsilon^{\frac{N-sp}{p-1}})-C_2\varepsilon^{\frac{N-sp}{p(p-1)}-\gamma(p^*_s-1)}-C_2\varepsilon^{\frac{N-sp}{p}-\gamma} \right]\\& +\frac{\lambda}{q}\int_{\Omega}\vert u\vert^{q}\dd x,
\end{align*}
where $R_1=\varepsilon^{-\gamma}$ with $ \gamma$ chosen so that   $0<\gamma<\min\left\{\dfrac{N-sp}{p(p-1)(p_s^*-1)}, \dfrac{N-sp}{p}\right\}.$

Taking 
\begin{align*}
A&=\left(1-\dfrac{a}{\lambda_1} \right)+2^{p-\tau}C_{*}\varepsilon^{s(p-\tau)},\   B=C_{*}p\big(C_1^pS_{p,s}^{N/sp}+C_0\big)\\
\intertext{and}
C&=bS_{p,s}^{N/sp}+O(\varepsilon^{\frac{N-sp}{p-1}})-bC_2\varepsilon^{\frac{N-sp}{p(p-1)}-\gamma(p^*_s-1)}-
bC_2\varepsilon^{\frac{N-sp}{p}-\gamma
},\end{align*} 
it is easy to see that there exists $ \varepsilon_1> 0 $ small enough such that for all $0<\varepsilon<\varepsilon_1<\widehat{\varepsilon_0}$, 
we have $A<0$ and $C>0$, so
\begin{align*}
I_{\lambda,s}(u)\leq & \;\frac{A}{p} R_1^p+\dfrac{B}{p}r^p-\frac{C}{p^*_s}r^{p^*_s} +\frac{\lambda}{q}\int_{\Omega}\vert u\vert^{q}\dd x.
\end{align*}

By applying Lemma \ref{C5P_33.0.0} to the function $h(r)=(Br^p/p)-(Cr^{p^*_s}/p^*_s)$ we obtain
\begin{align*}
I_{\lambda,s}(u)\leq &\frac{A}{p} R_1^p+\frac{\lambda}{q}
\int_{\Omega}\vert u\vert^{q}\dd x\\ &+\dfrac{1}{N}\left(\frac{C_{*}p\big(C_1^pS_{p,s}^{N/sp}+C_0\big)}{ [bS_{p,s}^{N/sp}+O(\varepsilon^{\frac{N-sp}{p-1}})-bC_2\varepsilon^{\frac{N-sp}{p(p-1)}-\gamma(p^*_s-1)}-bC_2\varepsilon^{\frac{N-sp}{p}-\gamma}]^{p/p^*_s}}\right)^{N/sp}.
\end{align*}
Therefore, since $A<0$ and there exists $\widehat{\varepsilon_1}>0$ small enough that if
$0<\varepsilon<\widehat{\varepsilon_1}<\varepsilon_1$ then $ R_1> 0 $ is large enough, we have the result for $u\in\Gamma_2$.

Finally, if $u\in\Gamma_3$, then  $u=u_1+R_2e_{\varepsilon,\delta}\in \textrm{span}\{\varphi_1\}\,\oplus \textrm{span}\{e_{\varepsilon,\delta}\}$, with $\|u_1\|_{{X_p^s}}\leq R_1$. We recall that	$R_2=2R_1$. 

By \eqref{eq002}, since $\|e_{\varepsilon,\delta}\|_{{X_p^s}}$ is bounded, we have
\begin{align}\label{C3N_47.0}
I_{\lambda,s}(u)\leq & \;\left[ C_{*}\big(C_1^pS_{p,s}^{N/sp}+C_0\big)+\dfrac{C_{*}C_2 }{2^{\tau}}\right] R_2^{p}-\frac{b}{p^*_s}R_2^{p^*_s}\int_{\Omega}\left(\left(\frac{u_1}{R_2}+e_{\varepsilon,\delta}\right)^{+}\right)^{p_s^*}\nonumber\\&+\frac{\lambda}{q}\int_{\Omega}\vert u\vert^{q}\dd x.
\end{align}
Since the norms are equivalent on finite dimensional spaces, there exists a constant $C_3>0$ such that, if $\|u_1\|_{{X_p^s}}\leq R_1$, then
\begin{equation*}
\|u_1\|_{L^{\infty}(\Omega)}\leq C_3\|u_1\|_{{X_p^s}}\leq C_3R_1.
\end{equation*}
Furthermore,
	\begin{equation}\label{C3N_48}
	\Omega'_{\varepsilon}:=\left\{x\in\Omega\, :\, e_{\varepsilon,\delta}(x)>\dfrac{C_3}{2}+1  \right\}\supset\left\{x\in\Omega\, :\, e_{\varepsilon,\delta}(x)>C_3+1  \right\}:=D_{\varepsilon}
	\end{equation}
with $\vert D_{\varepsilon}\vert>0$.

So, for all $x\in\Omega'_{\varepsilon}$, it follows from \eqref{C3N_47}, \eqref{C3N_48}  and $ R_2=2R_1$ that
\begin{align}\label{C3N_49}
\dfrac{u_1(x)}{R_2}+e_{\varepsilon,\delta}(x)>\dfrac{u_1(x)}{R_2}+\dfrac{C_3R_1}{R_2}+1\geq\dfrac{u_1(x)}{R_2}+\dfrac{\|u_1\|_{L^{\infty}(\Omega)}}{R_2}+1\geq 1.
\end{align}

Substituting \eqref{C3N_48} and \eqref{C3N_49} into \eqref{C3N_47.0} we obtain
\begin{align*}
I_{\lambda,s}(u)&\leq\left[ C_{*}\big(C_1^pS_{p,s}^{N/sp}+C_0\big)+\dfrac{C_{*}C_2 }{2^{\tau}}\right] R_2^{p}-\frac{b}{p^*_s}R_2^{p^*_s}\vert D_{\varepsilon}\vert+\frac{\lambda}{q}\int_{\Omega}\vert u\vert^{q}\dd x,
\end{align*}
for all $u\in\Gamma_3$. So by choosing $\varepsilon>0$ small enough, we obtain $R_2$ (and also $R_1$) large enough and our proof is complete.
$\hfill\Box$\end{proof}
\begin{remark}
It is noteworthy to stress that $R_1$, $R_2$ and $\varepsilon$ in Proposition \ref{C3N_28.0} do not depend on $\lambda$.
\end{remark}

We also recall the following elementary inequality (see \cite[p. 122]{stroock}).
\begin{lemma}\label{stroock} For $p>1$, there exists a positive constant $K_p$, depending on $p$, such that 
for all $a,b\in\mathbb{R}$
$$\left\vert\vert b\vert^p-\vert a\vert^p-\vert b-a\vert^p\right\vert\leq K_p\left(\vert b-a\vert^{p-1}\vert a\vert+\vert a\vert^{p-1}\vert b-a\vert \right).$$
\end{lemma}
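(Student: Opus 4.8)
The plan is to prove this elementary inequality by a one-variable telescoping argument based on the fundamental theorem of calculus, combined with a case distinction according to which of $|a|$ and $|b-a|$ is larger. First I would dispose of the trivial situations: both sides vanish when $a=0$ and when $b=a$. Next I would observe that the inequality is invariant under the substitution $(a,b)\mapsto(b-a,b)$, which interchanges $a$ with $c:=b-a$ while leaving $b$ (hence $|b|^p-|a|^p-|b-a|^p$) and the right-hand side unchanged; therefore it is enough to establish the estimate when $|b-a|\le|a|$.

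So assume $0<|c|\le|a|$ and write $b=a+c$. Since $p>1$, the function $\phi(x)=|x|^p$ is of class $C^1$ on $\mathbb{R}$, with continuous derivative $\phi'(x)=p|x|^{p-2}x$ (the value at $0$ being $0$). The fundamental theorem of calculus, after the change of variables $x=a+tc$, then gives
$$\bigl|\,|b|^p-|a|^p\,\bigr|=\Bigl|\int_0^1 p\,|a+tc|^{p-2}(a+tc)\,c\,\dd t\Bigr|\le p\,|c|\int_0^1|a+tc|^{p-1}\,\dd t\le p\,|c|\,(|a|+|c|)^{p-1},$$
where I used $|a+tc|\le|a|+|c|$. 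Because $|c|\le|a|$ we have $(|a|+|c|)^{p-1}\le 2^{p-1}|a|^{p-1}$ and also $|c|^p=|c|^{p-1}|c|\le|c|^{p-1}|a|$, so the triangle inequality yields
$$\bigl|\,|b|^p-|a|^p-|c|^p\,\bigr|\le p\,2^{p-1}|a|^{p-1}|c|+|c|^{p-1}|a|\le K_p\bigl(|a|^{p-1}|c|+|a|\,|c|^{p-1}\bigr)$$
with $K_p=p\,2^{p-1}+1$. Recalling $c=b-a$, this is the claim when $|b-a|\le|a|$, and the complementary case follows by the symmetry noted above; the constant $K_p$ obtained this way depends only on $p$.

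The only point that requires care is the case split. Telescoping $|b|^p-|a|^p$ along the segment from $a$ to $b$ leaves the leftover term $|b-a|^p$, which the right-hand side does not control when $|b-a|$ is much larger than $|a|$ (e.g.\ when $a=0$); symmetrically, telescoping $|b|^p-|b-a|^p$ leaves an $|a|^p$ term that fails when $|a|\gg|b-a|$. Splitting on whether $|b-a|\le|a|$ or not is exactly what makes the leftover term absorbable, since whichever of $|a|,|b-a|$ is smaller, its $p$-th power is bounded by its $(p-1)$-st power times the larger one. An alternative, slightly less explicit route would be to normalize by $|a|^p$ (assuming $a\neq 0$), reduce to the scalar inequality $|\,|1+t|^p-1-|t|^p\,|\le K_p(|t|^{p-1}+|t|)$ with $t=(b-a)/a$, and deduce boundedness of the relevant quotient from its behaviour as $t\to 0$ and $t\to\pm\infty$ together with continuity on compacta; but the telescoping argument above is shorter and yields an explicit constant.
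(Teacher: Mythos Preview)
Your argument is correct: the symmetry $(a,b)\mapsto(b-a,b)$ genuinely swaps the roles of $a$ and $b-a$ while fixing both sides of the inequality, the fundamental theorem of calculus applies since $|x|^p$ is $C^1$ for $p>1$, and the case $|c|\le|a|$ then absorbs the leftover $|c|^p$ exactly as you describe. The constant $K_p=p\,2^{p-1}+1$ is explicit and depends only on $p$.

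As for comparison with the paper: there is nothing to compare, since the paper does not prove this lemma at all but merely cites Stroock's textbook. Your self-contained argument therefore supplies what the paper omits, and the explicit constant is a bonus that a bare citation does not provide.
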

By adapting the proof of Lemma 6.1 in Miyagaki, Motreanu and Pereira \cite{O}, we obtain the following result. 
Some details are to be found there.

\begin{lemma}\label{C3N_50.0}
Suppose that $N>sp$. For $\varepsilon>0$ small enough, the following estimate is true.
\begin{align*}
I_{\lambda,s}(u)\leq &\;\dfrac{s}{N}\left(\dfrac{1}{b}\right)^{\frac{N-sp}{sp}}\left(\dfrac{\|e_{\varepsilon,\delta}\|_{{X_p^s}}^p-a\|e_{\varepsilon,\delta}\|_{L^p(\Omega)}^p}{\|u_{\varepsilon,\delta}\|_{L^{p_s^*}(\Omega)}^p}\right)^{\frac{N}{sp}}+\dfrac{\lambda}{q}\int_{\Omega}\vert u\vert^{q}\dd x\\
&+\left\{ \begin{array}{ll}
K_0\varepsilon^{s(p-1)}\vert\log(\frac{\varepsilon}{\delta})\vert^{p^*_s}, &\textrm{if}\quad 
p=\frac{2N}{N+s}\,\textrm{ and }\,N>sp^2,\\
K_0\varepsilon^{s(p-1)}, &\textrm{if}\quad 1< p<\frac{2N}{N+s}\,\textrm{ and }\,N>sp^2,\\
K_0\varepsilon^{s(p-1)}+K_0\varepsilon^{s}, &\textrm{if}\quad p> \frac{2N}{N+s}\,\textrm{ and }\,N>sp^2,
\end{array} \right.
\end{align*}
for all $u\in Q_{\varepsilon, R_1, R_2}$, where the constant $K_0>0$ does not depend on $\varepsilon.$
\end{lemma}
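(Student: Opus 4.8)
The plan is to take any $u\in Q_{\varepsilon,R_1,R_2}$, write it as $u=u_1+re_{\varepsilon,\delta}$ with $u_1\in\operatorname{span}\{\varphi_1\}$ and $0\le r\le R_2$, and bound $I_{\lambda,s}(u)$ from above by isolating the $\operatorname{span}\{\varphi_1\}$-contribution, the $L^{p_s^*}$-contribution coming from $e_{\varepsilon,\delta}$, and collecting everything else into error terms that are controlled by the asymptotic estimates \eqref{C3N_22.0}--\eqref{C3N_24}, Lemma \ref{a.5}, Lemma \ref{a.6} and Lemma \ref{lema1}. First I would use the definition of $I_{\lambda,s}$ to split
\[
I_{\lambda,s}(u)=\underbrace{\tfrac1p\|u\|_{X_p^s}^p-\tfrac ap\|u\|_{L^p(\Omega)}^p}_{\text{quadratic part}}-\tfrac{b}{p_s^*}\int_\Omega (u^+)^{p_s^*}\,\dd x+\tfrac\lambda q\int_\Omega|u|^q\,\dd x,
\]
and apply Lemma \ref{C3N_42} to the quadratic part; since $(1-a/\lambda_1)<0$ and $\|u_1\|_{L^p(\Omega)}$ is comparable to $\|u_1\|_{X_p^s}$, the $u_1$-terms can be absorbed into the error, leaving essentially $\tfrac1p r^p\|e_{\varepsilon,\delta}\|_{X_p^s}^p-\tfrac ap r^p\|e_{\varepsilon,\delta}\|_{L^p(\Omega)}^p$ plus a mixed term $C_*r^{p-\tau}\|e_{\varepsilon,\delta}\|_{L^p(\Omega)}^{p-\tau}\|u_1\|_{X_p^s}^\tau$.

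Next I would handle the critical term. On the set where $u_1/r+e_{\varepsilon,\delta}>0$ one has $(u^+)^{p_s^*}\ge r^{p_s^*}(u_1/r+e_{\varepsilon,\delta})^{p_s^*}$, and by Lemma \ref{C5P_33.0} with $u=u_1/r$, $v=e_{\varepsilon,\delta}$ one gets
\[
\int (u^+)^{p_s^*}\ge r^{p_s^*}\Big(\|e_{\varepsilon,\delta}\|_{L^{p_s^*}(\Omega)}^{p_s^*}-C\big(\|e_{\varepsilon,\delta}\|_{L^1(\Omega)}R_1^{p_s^*-1}+\|e_{\varepsilon,\delta}\|_{L^{p_s^*-1}(\Omega)}^{p_s^*-1}R_1\big)\Big).
\]
Using Lemma \ref{lema1} to replace $\|e_{\varepsilon,\delta}\|_{L^{p_s^*}(\Omega)}^{p_s^*}$ by $\|u_{\varepsilon,\delta}\|_{L^{p_s^*}(\Omega)}^{p_s^*}$ up to the stated error, together with \eqref{C3N_22.0} and \eqref{C3N_24}, the cross terms become $O(\varepsilon^{\sigma})$ for suitable positive $\sigma$ depending on the three regimes of $p$. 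This reduces $I_{\lambda,s}(u)$ to the form
\[
I_{\lambda,s}(u)\le \tfrac1p B r^p-\tfrac{b}{p_s^*}\,\tilde C\,r^{p_s^*}+\tfrac\lambda q\int_\Omega|u|^q\,\dd x+(\text{error}),
\]
with $B\le \|e_{\varepsilon,\delta}\|_{X_p^s}^p-a\|e_{\varepsilon,\delta}\|_{L^p(\Omega)}^p+O(\varepsilon^{\text{small}})$ and $\tilde C$ close to $\|u_{\varepsilon,\delta}\|_{L^{p_s^*}(\Omega)}^{p_s^*}$. Then I would maximize the function $h(r)=Br^p/p-b\tilde Cr^{p_s^*}/p_s^*$ over $r\ge 0$ by Lemma \ref{C5P_33.0.0}, which gives exactly $\tfrac sN\big(B^{p_s^*/p}/(b\tilde C)\big)^{N/sp}$; rewriting this in terms of $\|e_{\varepsilon,\delta}\|_{X_p^s}^p-a\|e_{\varepsilon,\delta}\|_{L^p(\Omega)}^p$ and $\|u_{\varepsilon,\delta}\|_{L^{p_s^*}(\Omega)}^p$ yields the leading term in the statement, with $b$ factored out as $(1/b)^{(N-sp)/sp}$.

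The main obstacle is the bookkeeping of error terms: one must verify that every term generated by Lemma \ref{C3N_42} (the mixed $r^{p-\tau}$ term), by Lemma \ref{C5P_33.0} (the $R_1^{p_s^*-1}$ and $R_1$ cross terms), by the replacement of $\|e_{\varepsilon,\delta}\|_{L^{p_s^*}}$ by $\|u_{\varepsilon,\delta}\|_{L^{p_s^*}}$ via Lemma \ref{lema1}, and by the discrepancy between $u^+$ on $\Omega$ and on the good set, all collapse—after choosing $R_1=R_1(\varepsilon)$ appropriately (e.g. a negative power of $\varepsilon$ as in Proposition \ref{C3N_28.0}) and using Lemma \ref{a.6}$(ii)$ to bound $\|e_{\varepsilon,\delta}\|_{L^p(\Omega)}^p$ from below by $c\,\varepsilon^{sp}$—into the three-case bound $K_0\varepsilon^{s(p-1)}$ (resp. $K_0\varepsilon^{s(p-1)}|\log(\varepsilon/\delta)|^{p_s^*}$, resp. $K_0\varepsilon^{s(p-1)}+K_0\varepsilon^{s}$) with $K_0$ independent of $\varepsilon$. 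The exponent $s(p-1)$ is the worst (smallest) exponent among all contributions once the maximization in $r$ has been performed, and confirming that no term decays slower than this—while also checking that the dependence on $R_1$ does not spoil the estimate because the $r$-maximization kills the $R_2^p=(2R_1)^p$ growth against the $R_2^{p_s^*}$ term—is the delicate point that forces the case analysis on $p$ relative to $2N/(N+s)$ and the hypothesis $N>sp^2$.
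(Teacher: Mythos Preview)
Your overall plan (split off the $\operatorname{span}\{\varphi_1\}$-piece, control the critical term via Lemma~\ref{C5P_33.0} and Lemma~\ref{lema1}, then maximize in $r$ through Lemma~\ref{C5P_33.0.0}) matches the paper's strategy, but there is a genuine gap in the very first step: Lemma~\ref{C3N_42} is the wrong tool for the quadratic part here. That lemma, being built on Lemma~\ref{C3N_34} with some $k>1$, only gives
\[
\tfrac1p\|u_1+re_{\varepsilon,\delta}\|_{X_p^s}^p-\tfrac ap\|u_1+re_{\varepsilon,\delta}\|_{L^p}^p\le \tfrac1p\|u_1\|_{X_p^s}^p-\tfrac ap\|u_1\|_{L^p}^p+C_*r^p\|e_{\varepsilon,\delta}\|_{X_p^s}^p+\text{(cross terms)},
\]
with a constant $C_*>1/p$. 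After the maximization in $r$, this produces the leading term multiplied by the extra factor $(pC_*)^{N/sp}>1$, so you do \emph{not} recover the exact expression $\tfrac sN b^{(sp-N)/sp}\bigl((\|e_{\varepsilon,\delta}\|_{X_p^s}^p-a\|e_{\varepsilon,\delta}\|_{L^p}^p)/\|u_{\varepsilon,\delta}\|_{L^{p_s^*}}^p\bigr)^{N/sp}$ that the lemma asserts---and that exact constant is precisely what is needed in Lemma~\ref{C3N_64.0} to push the level below $\tfrac sN b^{(sp-N)/sp}S_{p,s}^{N/sp}$. Lemma~\ref{C3N_42} is adequate for Proposition~\ref{C3N_28.0}, where only a qualitative bound is required, but not here.

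The paper fixes this by using Lemma~\ref{stroock} instead: applied pointwise with $a=u_1(x)-u_1(y)$, $b=u(x)-u(y)$ (and analogously for the $L^p$-norms), it yields the \emph{sharp} decomposition
\[
\tfrac1p\|u\|_{X_p^s}^p-\tfrac ap\|u\|_{L^p}^p\le \tfrac1p\bigl(\|u_1\|_{X_p^s}^p-a\|u_1\|_{L^p}^p\bigr)+\tfrac{r^p}{p}\bigl(\|e_{\varepsilon,\delta}\|_{X_p^s}^p-a\|e_{\varepsilon,\delta}\|_{L^p}^p\bigr)+C\bigl(R_2^{p-1}\|e_{\varepsilon,\delta}\|_{L^p}^{p-1}R_1+R_2R_1^{p-1}\|e_{\varepsilon,\delta}\|_{L^p}\bigr),
\]
with coefficient exactly $r^p/p$ on the $e_{\varepsilon,\delta}$-block; the first parenthesis is then dropped because $a>\lambda_1$. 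The cross terms, together with \eqref{C3N_23}, feed directly into the $\varepsilon^{s(p-1)}$ (resp.\ $\varepsilon^{s(p-1)}+\varepsilon^s$) error. So your bookkeeping for the critical term is fine, but replace Lemma~\ref{C3N_42} by Lemma~\ref{stroock} in the first step; otherwise the sharp leading constant is lost and the subsequent comparison with $S_{p,s}$ in Lemma~\ref{C3N_64.0} fails.
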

\begin{proof}By applying Lemma \ref{stroock} for $a=u_1(x)$ and $b=u_1(x)+re_{\varepsilon,\delta}(x)$, 
the equivalences of norms in finite dimensional spaces guarantees that 
\begin{align*}
&\dfrac{1}{p}\|u_1+re_{\varepsilon,\delta}\|_{X_p^s}^p-\dfrac{a}{p}\|u_1+r e_{\varepsilon,\delta}\|_{L^p(\Omega)}^p 
\leq \; \dfrac{1}{p}\left(\|u_1\|_{X_p^s}^p-a\|u_1\|_{L^p(\Omega)}^p\right)\\
&+\dfrac{r^p}{p}\left(\|e_{\varepsilon,\delta}\|_{X_p^s}^p-a\|e_{\varepsilon,\delta}\|_{L^p(\Omega)}^p\right) +CR_2^{p-1}\|e_{\varepsilon,\delta}\|_{L^p(\Omega)}^{p-1}\|u_1\|_{{X_p^s}}\\
&+CR_2\|u_1\|_{X_p^s}^{p-1}\|e_{\varepsilon,\delta}\|_{L^p(\Omega)}.
\end{align*}

Thus, by applying Lemma \ref{C3N_42} and estimate \eqref{C3N_23}, since $a>\lambda_1$, we obtain
\begin{align*}
I_{\lambda,s}(u)\leq&\dfrac{r^p}{p}\left(\|e_{\varepsilon,\delta}\|_{X_p^s}^p-a\|e_{\varepsilon,\delta}\|_{L^p(\Omega)}^p\right)-\dfrac{b}{p^*_s}\int_{\Omega}(u^+)^{p^*_s}\dd x+\dfrac{\lambda}{q}\int_{\Omega}\vert u\vert^q\dd x\\
&+\left\{ \begin{array}{ll}
2C'\varepsilon^{s(p-1)}\vert\log(\frac{\varepsilon}{\delta})\vert, &\textrm{if}\quad  p=\frac{2N}{N+s}\,\mbox{ and }\,N>sp^2,\\
2C'\varepsilon^{s(p-1)}, &\textrm{if}\quad 1< p<\frac{2N}{N+s}\,\mbox{ and }\,N>sp^2,\\
C'\varepsilon^{s(p-1)}+C'\varepsilon^{s}, &\textrm{if}\quad p> \frac{2N}{N+s}\,\mbox{ and }\,N>sp^2.
\end{array} \right.
\end{align*}

We control the term  $-b/p_s^*\|u^+\|_{L^{p^*_s}(\Omega)}^{p_s^*}$ by applying estimates \eqref{C3N_20} and \eqref{C3N_21} and conclude the existence of a constant $C_6>0$ such that 
\begin{align*}
-\dfrac{b}{p^*_s}\int_{\Omega}\big( u^+\big)^{p^*_s}\dd x
\leq&-\dfrac{br^{p^*_s}}{p_s^*}\int_{\Omega}\big(u_{\varepsilon,\delta}\big)^{p_s^*}\dd x\\& +\left\{ \begin{array}{ll}
C_6 \varepsilon^{\frac{N-sp}{p}}\vert\log(\frac{\varepsilon}{\delta})\vert^{p^*_s}, &\textrm{if}\quad  p=\frac{2N}{N+s},\\
C_6\varepsilon^{\frac{N-sp}{p}}+C_6\varepsilon^{N(p^*_s-1)}, &\textrm{if}\quad 1<p<\frac{2N}{N+s},\\
C_6\varepsilon^{\frac{N-sp}{p}}, &\textrm{if}\quad p>\frac{2N}{N+s}.
	\end{array} \right.
	\end{align*}
Thus, there exists a constant $K_0>0$ such that
\begin{align}\label{C3N_64}
I_{\lambda,s}(u)\leq&\dfrac{r^p}{p}\left(\|e_{\varepsilon,\delta}\|_{X^s_p}^p-a\|e_{\varepsilon,\delta}\|_{L^p(\Omega)}^p\right)-\dfrac{br^{p^*_s}}{p^*_s}\int_{\Omega}\vert u_{\varepsilon,\delta}\vert^{p^*_s}\dd x+\dfrac{\lambda}{q}\int_{\Omega}\vert u\vert^q\dd x\nonumber\\
&+\left\{ \begin{array}{ll}
K_0\varepsilon^{s(p-1)}\vert\log(\frac{\varepsilon}{\delta})\vert^{p^*_s}, &\textrm{if}\quad
p=\frac{2N}{N+s}\,\textrm{ and }\,N>sp^2,\\
K_0\varepsilon^{s(p-1)}, &\textrm{if}\quad 1< p<\frac{2N}{N+s}\,\textrm{ and }\,N>sp^2,\\
K_0\varepsilon^{s(p-1)}+K_0\varepsilon^{s}, &\textrm{if}\quad p> \frac{2N}{N+s}\,\textrm{ and }\,N>sp^2.
\end{array} \right.
\end{align}
	
By applying Lemma \ref{C5P_33.0.0} we have that the function $f:[0, +\infty)\to\mathbb{R}$ given by 
$$f(t)=\dfrac{t^p}{p}\left(\|e_{\varepsilon,\delta}\|_{X^s_p}^p-a\|e_{\varepsilon,\delta}\|_{L^p(\Omega)}^p\right)-\dfrac{bt^{p^*_s}}{p^*_s}\|u_{\varepsilon,\delta}\|_{L^{p^*_s}(\Omega)}^{p^*_s},$$
admits its maximum at a point $t_M$ so that  
$$f(t_M)=\dfrac{s}{N}\left(\dfrac{1}{b}\right)^{\frac{N-sp}{sp}}\left(\dfrac{\|e_{\varepsilon,\delta}\|_{X^s_p}^p-
a\|e_{\varepsilon,\delta}\|_{L^p(\Omega)}^p}{\|u_{\varepsilon,\delta}\|^{p}_{L^{p^*_s}(\Omega)}}\right)^{\frac{N}{sp}}.$$
Our result now follows from \eqref{C3N_64}. 
$\hfill\Box$\end{proof}

Also the next result adapts a similar result obtained in \cite{O}, namely Lemma 6.2.
\begin{lemma}\label{C3N_64.0}
Suppose that $N>sp^2$ and $1<p\leq\dfrac{2N}{N+s}$. Then we have
\begin{equation*}
c_s:=\displaystyle\inf_{h\in\Gamma}\displaystyle\sup_{u\in Q_{\varepsilon,R_1,R_2}}I_{\lambda,s}(h(u))<\dfrac{s}{N}b^\frac{sp-N}{sp}S_{p,s}^\frac{N}{sp}
\end{equation*}
for all $\varepsilon>0$ and $\lambda>0$ small enough,
where $$\Gamma=\left\{h\in C(\overline{Q}_{\varepsilon,R_1,R_2},X^s_p)\;:\;h=id\textrm{ in }\partial Q_{\varepsilon,R_1,R_2}\right\}.$$

The same result is also valid if  $N>sp\left((p-1)^2+p\right)$ and $p>2N/(N+s)$.
\end{lemma}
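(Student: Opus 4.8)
The strategy is the standard one for a linking argument at the critical level: bound the minimax value $c_s$ from above by the supremum of $I_{\lambda,s}$ over $Q_{\varepsilon,R_1,R_2}$ itself, and show that this supremum drops strictly below the compactness threshold $\frac{s}{N}b^{\frac{sp-N}{sp}}S_{p,s}^{N/sp}$ of Lemma \ref{C3P_3} once $\varepsilon$, and then $\lambda$, are taken small. Since the identity belongs to $\Gamma$, we have $c_s\le\sup_{u\in Q_{\varepsilon,R_1,R_2}}I_{\lambda,s}(u)$, with $R_1,R_2$ the (large, $\lambda$-independent) constants from Proposition \ref{C3N_28.0}; by the linking geometry of Proposition \ref{C2P_25} we also know $c_s\ge\alpha>0$, so only the upper estimate is at issue. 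For $u\in Q_{\varepsilon,R_1,R_2}$ one has $\|u\|_{X_p^s}\le R_1+R_2\eta$, where $\eta=\sup_{\varepsilon}\|e_{\varepsilon,\delta}\|_{X_p^s}<\infty$, hence $\frac{\lambda}{q}\int_\Omega|u|^q\dd x\le\lambda\,M(\varepsilon)$ for a constant $M(\varepsilon)$ independent of $\lambda$. Applying Lemma \ref{C3N_50.0},
\begin{equation*}
c_s\le\frac{s}{N}\Big(\frac{1}{b}\Big)^{\frac{N-sp}{sp}}\mathcal{R}_\varepsilon^{N/sp}+\lambda\,M(\varepsilon)+E(\varepsilon),\qquad \mathcal{R}_\varepsilon:=\frac{\|e_{\varepsilon,\delta}\|_{X_p^s}^p-a\|e_{\varepsilon,\delta}\|_{L^p(\Omega)}^p}{\|u_{\varepsilon,\delta}\|_{L^{p_s^*}(\Omega)}^p},
\end{equation*}
where $E(\varepsilon)$ denotes the (case-dependent) error term of that lemma; everything reduces to estimating $\mathcal{R}_\varepsilon$.

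To estimate $\mathcal{R}_\varepsilon$ I would write $e_{\varepsilon,\delta}=u_{\varepsilon,\delta}-P_1^s u_{\varepsilon,\delta}$, with $P_1^s u_{\varepsilon,\delta}=c_\varepsilon\varphi_1$ and $|c_\varepsilon|\le\|\varphi_1\|_{L^\infty(\Omega)}^{p-1}\|u_{\varepsilon,\delta}\|_{L^1(\Omega)}$ (using $(-\Delta)^s_p\varphi_1=\lambda_1\varphi_1^{p-1}$), and combine Lemmas \ref{stroock} and \ref{C3N_35} with the estimates \eqref{C3N_22}, \eqref{C3N_22.0}, \eqref{C3N_23} to obtain $\|e_{\varepsilon,\delta}\|_{X_p^s}^p\le\|u_{\varepsilon,\delta}\|_{X_p^s}^p+o(\varepsilon^{sp})$ and $\|e_{\varepsilon,\delta}\|_{L^p(\Omega)}^p\ge\|u_{\varepsilon,\delta}\|_{L^p(\Omega)}^p-o(\varepsilon^{sp})$. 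Inserting the asymptotics of Lemma \ref{a.6}(i)--(iii), and using $p/p_s^*=(N-sp)/N$, $\big(S_{p,s}^{N/sp}\big)^{p/p_s^*}=S_{p,s}^{(N-sp)/sp}$ and $S_{p,s}^{N/sp}/S_{p,s}^{(N-sp)/sp}=S_{p,s}$, gives for $\varepsilon$ small $\mathcal{R}_\varepsilon\le S_{p,s}\big(1-c_1\varepsilon^{sp}\big)$, whence $\mathcal{R}_\varepsilon^{N/sp}\le S_{p,s}^{N/sp}-c_2\varepsilon^{sp}$ for some $c_1,c_2>0$. Here the one favourable contribution is $a\|u_{\varepsilon,\delta}\|_{L^p(\Omega)}^p\gtrsim\varepsilon^{sp}$ (Lemma \ref{a.6}(ii), available because $N>sp^2$), and it must strictly dominate the unfavourable remainders: the Sobolev-quotient corrections $(\varepsilon/\delta)^{(N-sp)/(p-1)}$ and $(\varepsilon/\delta)^{N/(p-1)}$ from Lemma \ref{a.6}, and the $L^1$-correction $\|u_{\varepsilon,\delta}\|_{L^1(\Omega)}$ produced by removing the $\varphi_1$-component. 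The inequalities $(N-sp)/(p-1)>sp$ and $N/(p-1)>sp$ hold precisely because $N>sp^2$, while, by \eqref{C3N_22.0}, the decay exponent of $\|u_{\varepsilon,\delta}\|_{L^1(\Omega)}$ exceeds $sp$ when $1<p\le 2N/(N+s)$ iff $N>sp^2$, and when $p>2N/(N+s)$ iff $N>sp\big((p-1)^2+p\big)$ --- exactly the two hypotheses of the lemma. (When $p=2N/(N+s)$ the favourable term carries an extra harmless factor $|\log(\varepsilon/\delta)|$.)

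Inserting this into the bound for $c_s$,
\begin{equation*}
c_s\le\frac{s}{N}b^{\frac{sp-N}{sp}}S_{p,s}^{N/sp}-\frac{s}{N}b^{\frac{sp-N}{sp}}c_2\varepsilon^{sp}+E(\varepsilon)+\lambda\,M(\varepsilon),
\end{equation*}
and I would first fix $\varepsilon>0$ so small that $E(\varepsilon)$ --- whose decay is controlled, case by case, by the same hypotheses on $N$ together with \eqref{C3N_22.0}, \eqref{C3N_23}, \eqref{C3N_24} --- is absorbed by the gain $\frac{s}{N}b^{\frac{sp-N}{sp}}c_2\varepsilon^{sp}$, and afterwards choose $\lambda>0$ small (depending on that fixed $\varepsilon$) so that $\lambda M(\varepsilon)$ is absorbed too; this yields $c_s<\frac{s}{N}b^{\frac{sp-N}{sp}}S_{p,s}^{N/sp}$. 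The second regime $N>sp\big((p-1)^2+p\big)$, $p>2N/(N+s)$ is handled line by line in the same way, simply replacing each asymptotic estimate by the corresponding branch of Lemma \ref{a.6}, of \eqref{C3N_22.0}--\eqref{C3N_24}, and of Lemma \ref{C3N_50.0}.

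The hard part is the bookkeeping of exponents carried out in the middle step: one must verify that \emph{every} positive remainder --- coming from the Sobolev quotient, from the projection onto $W$, and from interchanging $e_{\varepsilon,\delta}$ with $u_{\varepsilon,\delta}$ in the critical term --- decays strictly faster than the single favourable quantity $a\|u_{\varepsilon,\delta}\|_{L^p(\Omega)}^p\sim\varepsilon^{sp}$ (or $\varepsilon^{sp}|\log(\varepsilon/\delta)|$ in the borderline case), and it is exactly this competition of powers that singles out the admissible ranges $N>sp^2$ and $N>sp\big((p-1)^2+p\big)$.
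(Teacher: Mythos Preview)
Your proposal follows essentially the same route as the paper: reduce via the identity map $h=\textup{id}\in\Gamma$ to estimating $\sup_{Q_{\varepsilon,R_1,R_2}}I_{\lambda,s}$, invoke Lemma~\ref{C3N_50.0}, and then show the Rayleigh-type quotient $\mathcal{R}_\varepsilon$ is strictly below $S_{p,s}$ by passing from $e_{\varepsilon,\delta}$ to $u_{\varepsilon,\delta}$ through Lemmas~\ref{stroock}, \ref{C3N_35} together with \eqref{C3N_22} and the asymptotics of Lemma~\ref{a.6}. The paper organizes the computation slightly differently---it works out the case $p=2N/(N+s)$ explicitly via the mean value theorem applied to $t\mapsto(1+t)^{(N-sp)/N}$ and then declares the remaining cases analogous, whereas you emphasize the uniform two-step choice of $\varepsilon$ then $\lambda$---but the ingredients, the exponent bookkeeping, and the conclusion are the same.
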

\begin{proof}
Since $h=id_{Q_{\varepsilon,R_1,R_2}}\in\Gamma$, the compactness of $Q_{\varepsilon,R_1,R_2}$ assures that is enough to prove
\begin{equation}\label{C3N_66}
I_{\lambda,s}(u)<\dfrac{s}{N}b^\frac{sp-N}{sp}S_{p,s}^\frac{N}{sp},\; \forall\, u\in Q_{\varepsilon,R_1,R_2}.
\end{equation}
By applying Lemma \ref{C3N_35} for $a=u_{\varepsilon,\delta}-P_1^s u_{\varepsilon,\delta}=e_{\varepsilon,\delta}$ and $b=P_{2}^{s}u_{\varepsilon,\delta}$, for some constant $c_0>0$ we have, 
\begin{equation*}
\|e_{\varepsilon,\delta}\|_{L^p(\Omega)}^p\geq\frac{1}{c_0}\|u_{\varepsilon,\delta}\|_{L^p(\Omega)}^p-\|P_{1}^{s}u_{\varepsilon,\delta}\|_{L^p(\Omega)}^p.
\end{equation*}
Since $N>sp^2$, it follows from Lemma \ref{a.6} that $\|u_{\varepsilon,\delta}\|^p_{L^p(\Omega)}\geq c_2\varepsilon^{sp}$ for a constant $c_2>0$. Taking into account \eqref{C3N_22}, we obtain 
\begin{equation*}
\|e_{\varepsilon,\delta}\|_{L^p(\Omega)}^p\geq\frac{c_2}{c_0}\varepsilon^{sp}-\left\{ \begin{array}{ll}
c_1\varepsilon^{N}\vert\log(\frac{\varepsilon}{\delta})\vert^p, &\textrm{if}\quad  p=\frac{2N}{N+s}\,\textrm{ and }\,N>sp^2, \\
c_1\varepsilon^{N(p-1)+sp}, &\textrm{if}\quad 1< p<\frac{2N}{N+s}\,\textrm{ and }\,N>sp^2,\\
c_1\varepsilon^{\frac{N-sp}{p-1}}, &\textrm{if}\quad p> \frac{2N}{N+s}\,\textrm{ and }\,N>sp^2.
\end{array} \right. 
\end{equation*}
	
But Lemma \ref{a.6} yields
\begin{equation*}
\|u_{\varepsilon,\delta}\|_{L^{p^*_s}}^{p^*_s}\geq S_{p,s}^{\frac{N}{sp}}+O(\varepsilon^{\frac{N}{p-1}}).
	\end{equation*}
	
Now, mimicking \cite[p.286]{Ch}, we obtain
$$\left\vert \|e_{\varepsilon,\delta}\|_{{X_p^s}}^p-\|u_{\varepsilon,\delta}\|_{{X_p^s}}^p\right\vert \leq c_3\|u_{\varepsilon,\delta}\|_{{X_p^s}}^{p-1}\| P_{1}^{s} u_{\varepsilon,\delta}\|_{L^{\infty}(\Omega)}+c_3\| P_{1}^{s} u_{\varepsilon,\delta}\|_{L^{\infty}(\Omega)}^{p}.$$
	
A new application of Lemma \ref{a.6} and the estimate \eqref{C3N_22} give

\begin{equation*}
\|e_{\varepsilon,\delta}\|_{{X_p^s}}^p\leq\|u_{\varepsilon,\delta}\|_{{X_p^s}}^p+\left\{ \begin{array}{ll}
c_4 \varepsilon^{\frac{N}{p}}\vert\log(\frac{\varepsilon}{\delta})\vert, &\textrm{if}\quad  p=\frac{2N}{N+s}\\
c_4\varepsilon^{N-\frac{N-sp}{p}}, &\textrm{if}\quad 1< p<\frac{2N}{N+s},\\
c_4\varepsilon^{\frac{N-sp}{p(p-1)}}, &\textrm{if}\quad p> \frac{2N}{N+s}.
\end{array} \right. 
\end{equation*}

\textit{Case 1:} Suppose $N>sp^2$ and $p=2N/(N+s)$. 

By applying the mean value theorem to the function  $f(t)=(1+t)^{\frac{N-sp}{N}}$, we conclude that
\begin{equation*}
\left\vert 1-\bigg(1+S_{p,s}^{-\frac{N}{sp}}O(\varepsilon^{\frac{N}{p-1}})\bigg)^{\frac{N-sp}{N}}\right\vert=O(\varepsilon^{\frac{N}{p-1}}).
\end{equation*} 
Then adding and subtracting $\left(1+S_{p,s}^{-\frac{N}{sp}}O(\varepsilon^{\frac{N}{p-1}})\right)^{\frac{N-sp}{N}}S_{p,s}^{\frac{N}{sp}},$ we have 

\begin{align*}
\lefteqn{\frac{\|e_{\varepsilon,\delta}\|_{X_p^s}^p-a\|e_{\varepsilon,\delta}\|_{L^p(\Omega)}^p}{\|u_{\varepsilon,\delta}\|_{L^{p^*_s}(\Omega)}^p}}\\
&\leq &S_{p,s}+  \dfrac{\varepsilon^{sp}\left[ O(\varepsilon^{\frac{N}{p-1}-sp})+ O(\varepsilon^{\frac{N-sp}{p-1}-sp})+\left(c_4+ac_2\right)\varepsilon^{\left(\frac{N}{p}-sp\right)}\vert\log(\frac{\varepsilon}{\delta})\vert-a\dfrac{c_2}{c_0}\right]}{\left(S_{p,s}^{N/sp}+O(\varepsilon^{\frac{N}{p-1}}) \right)^{\frac{N-sp}{N}}},
\end{align*}
from what follows
\begin{equation}\label{Sps}
\frac{\|e_{\varepsilon,\delta}\|_X^p-a\|e_{\varepsilon,\delta}\|_{L^p(\Omega)}^p}{\|u_{\varepsilon,\delta}\|_{L^{p^*_s}(\Omega)}^p}<S_{p,s},
\end{equation}
if $\varepsilon>0$ is small enough. Therefore, 
\begin{equation*}
\dfrac{s}{N}\left(\dfrac{1}{b}\right)^{\frac{N-sp}{sp}}\left(\frac{\|e_{\varepsilon,\delta}\|_{X_p^s}^p-a\|e_{\varepsilon,\delta}\|_{L^p(\Omega)}^p}{\|u_{\varepsilon,\delta}\|_{L^{p^*_s}(\Omega)}^p}\right)^{\frac{N}{sp}}<\dfrac{s}{N}\left(\dfrac{1}{b}\right)^{\frac{N-sp}{sp}}S_{p,s}^{\frac{N}{sp}},
\end{equation*}
The boundness of $Q_{\varepsilon,R_1,R_2}$  and Lemma \ref{C3N_50.0} guarantee that \eqref{C3N_66} is true for $\varepsilon>0$ and $\lambda>0$ small enough.
	
\textit{Case 2:} $N>sp^2$ and $1<p<\dfrac{2N}{N+s}$ or $N>sp\big((p-1)^2+p\big)$ and $\,p>\dfrac{2N}{N+s}$.  
The proofs are analogous to that of \textit{Case 1} . The conclusion follows once obtained the inequality \eqref{Sps}.
$\hfill\Box$\end{proof}\vspace*{.5cm}

\section{\textbf{Proof of Theorem \ref{t0}.}}

\begin{proof} The positive and negative solutions were obtained in Section \ref{Positivenegative}.
Let us denote $u_1$ the positive solution and $u_2$ the negative solution of problem \ref{principal} for every $ \lambda > 0$ sufficiently small. Obviously, these two solutions are distinct. In order to find a third nontrivial solution $u_3$ for the problem \eqref{principal} whose existence depend of the parameter $\lambda$ small enough, we apply the Linking Theorem to the functional $I_{\lambda,s}: X^s_p \rightarrow \mathbb{R}$. Indeed, the geometric conditions of the Linking Theorem follow from Proposition \ref{C2P_25} and Lemma \ref{C3N_28.0}. According to Lemmas \ref{C3P_3} and \ref{C3N_64.0}, $I_{\lambda,s}$ satisfies the $(PS)_{c_s}$-condition, with $$c_s=\displaystyle\inf_{h\in\Gamma}\displaystyle\sup_{u\in Q_{\varepsilon,R_1,R_2}}I_\lambda(h(u))<\dfrac{s}{N}b^\frac{sp-N}{sp}S_{p,s}^\frac{N}{sp}$$ 
and $\Gamma=\{h\in C(\overline{Q}_{\varepsilon,R_1,R_2},X_p^s)\;;\;h=id\textrm{ em }\partial Q_{\varepsilon,R_1,R_2}\}$, if $\varepsilon>0,\lambda>0$ are small enough and $R_1>0,R_2>0$ are large enough. Therefore, the solution $u_3$ is obtained by applying the Linking Theorem.  Since 
\begin{equation}\label{C3N_74}
I_{\lambda,s}(u_1)=I^{+}_{\lambda,s}(u_1):=C^+_{\lambda}\leq\frac{\lambda t_0^q}{q}\int_{\Omega}\varphi_1^q\dd x<\alpha\leq c_s=I_{\lambda,s}(u_3),
\end{equation}
we conclude that $u_1\neq u_3$. By the same reasoning, we conclude that $u_2\neq u_3$. We are done.
$\hfill\Box$

Observe that it follows from \eqref{C3N_74} and the analogous equation for $I^-_{\lambda,s}$ that, in the case 
$0<\lambda<q\alpha/\int_{\Omega}\varphi_1^q\dd x$, then solutions $u_1,u_2$ and $u_3$ are distinct, for $\alpha>0$ given in Proposition \ref{C2P_25}.
\end{proof}

\end{document}